\documentclass[12pt, reqno]{amsart}
\usepackage{verbatim}
\usepackage{graphicx, epsfig, psfrag}
\usepackage{amsfonts,amsmath,amssymb,amsbsy,amsthm}
\usepackage{bm}
\usepackage{color}
\usepackage{float}
\usepackage{hyperref}
\usepackage{mathrsfs}
\usepackage{subfigure}
\usepackage{bbm}
\usepackage[normalem]{ulem}

\setcounter{tocdepth}{3}


\newtheorem{assumption}{Assumption}

\numberwithin{equation}{section}

\usepackage{environments}

\usepackage[top=2.5cm, bottom=2.5cm, left=2.5cm, right=2.5cm]{geometry}




\renewcommand{\paragraph}[1]{\subsubsection{#1}}



\def\transpose{{\hspace{-1pt}\top}}

\def\R{\mathbb{R}}
\def\N{\mathbb{N}}



\def\<{\langle}
\def\>{\rangle}




\def\mB{{\sf B}}
\def\mF{{\sf F}}

\def\mG{{\sf G}}




\def\a{{\rm a}}
\def\c{{\rm c}}


\def\D{\nabla}





\newcommand{\triple}{(\rho \alpha\beta)}
\newcommand{\tripleTau}{(\tau\gamma\delta)}
\newcommand{\tripleSig}{(\sigma\iota\chi)}

\newcommand{\bfrho}{\mathbf{\rho}}


\definecolor{docol}{rgb}{0, 0.4, 0}
\definecolor{cocol}{rgb}{0.7, 0, 0}
\definecolor{ascol}{rgb}{0, 0, 0.7}



\def\XXint#1#2#3{{\setbox0=\hbox{$#1{#2#3}{\int}$ }
\vcenter{\hbox{$#2#3$ }}\kern-.6\wd0}}

\title{Regularity and Locality of Point Defects in Multilattices}
\author{Derek Olson and Christoph Ortner}
\thanks{DO was supported by the NSF PIRE Grant OISE-0967140. CO was supported by ERC Starting Grant 335120.}

\begin{document}

\begin{abstract}
   We formulate a model for a point defect embedded in a homogeneous
   multilattice crystal with an empirical interatomic potential interaction.
   Under a natural, phonon stability assumption we quantify
   the decay of the long-range elastic fields with increasing distance from
   the defect.

   These decay estimates are an essential ingredient in quantifying
   approximation errors in coarse-grained models and in the construction of
   optimal numerical methods for approximating crystalline defects.

\end{abstract}

\maketitle

\section{Introduction}

The mechanical and electrical properties of crystalline materials are heavily
influenced by defects in the crystalline lattice~\cite{phillips2001}.  These
range from point defects (the subject of the present work) including vacancies,
interstitials, impurities; line defects including the preeminent dislocation;
planar defects including grain boundaries; and many others including cracks and
voids.  Modeling each of these defects relies in some form on resolving the
long-range elastic fields generated by the defects.  Whether this is
accomplished via an empirical potential, continuum PDE, or multiscale method,
all of these approximations rely on \textit{decay and regularity} of the elastic
fields sufficiently far away from the defect.  For example, a key use of these decay
rates is in establishing rigorous asymptotic results for atomistic-to-continuum
methods for multilattices~\cite{olsonUnpub}.  These decay rates have long been
known in the engineering and materials community from elasticity
theory~\cite{eshelby1956,bacon1980,glebov2012} and computational
techniques~\cite{hardy1960,flinn1962,kanzaki1957,tewary1973,glebov2012}, and can
in fact be thought of as a means of classifying
defects~\cite{krivoglaz1969,gupta2002powder}.   While related mathematical results for the decay of scalar potential fields in a linearized model defined on a lattice were obtained in~\cite{rodin2002}, the first mathematical
result for proving these decay rates for an empirical atomistic model of point
defects and dislocations in Bravais lattices appeared only recently in~\cite{Ehrlacher2013} .

The present work is an extension of~\cite{Ehrlacher2013} to multilattices, which are
crystals with more than one atom per unit cell. Multilattice descriptions
allow for a much greater swath of materials to be considered including hcp
metals, diamond cubic structures, and the recently discovered two dimensional
materials, graphene and hexagonal boron-nitride, among several
others~\cite{novoselov2005}. For the sake of simplicity of presentation, we only
consider point defects in the present paper; however, there do not seem to be
major obstacles in combining the analysis for point defects presented here with
that of dislocations for Bravais lattices in~\cite{Ehrlacher2013} to also obtain
analogous results for dislocations in multilattices.

The method of obtaining these decay rates for point defects in multilattices is
similar to that of Bravais lattices; we show that the point defect solution
satisfies a linearized equation and then convert $L^1$ integrability of the
solution in Fourier space into algebraic decay in real space.  These
integrability conditions are determined from the Green's matrix of the
linearized problem. Herein lies the main difference between the Bravais lattice
and multilattice cases: the Green's matrix for a multilattice accounts for
relative shifts between atoms in each unit cell which leads to a different
structure than in the Bravais lattice case.

In Theorem~\ref{main_thm} we recover the result from the Bravais lattice case
\cite{Ehrlacher2013} that the discrete strain field decays at a rate of $r^{-d}$
where $d$ is space dimension and $r$ is the distance from the defect. The
additional new result is that the relative shifts (which are indeed also a form of
strain) also decay at a rate of $r^{-d}$.

In the process of proving this result, we also establish a convenient
connection between phonon stability and stability in a natural discrete
energy-norm, extending an analogous observation for Bravais lattices
\cite{Ehrlacher2013}. This in particular leads to a simplified proof of the fact
\cite{weinan2007cauchy} that atomistic stability (phonon stability) implies
stability of the Cauchy--Born continuum model (see also \cite{hudson2012}).


\subsection*{Outline}

We begin by introducing the notation for formulating the atomistic defect
problem on a multilattice and the assumptions required of the atomistic
potential in Section~\ref{model}.  Our main result, Theorem~\ref{main_thm}, is
also presented there.  We divide the proof of Theorem~\ref{main_thm} into two
sections. In Section~\ref{s:semi}, we review the required facts of the Fourier
transform and state them in the specificity and version required for the
application at hand.  Section~\ref{s:semi} also reviews the multilattice
Cauchy--Born model and proves that atomistic stability implies Cauchy--Born
stability, closely mirroring the approach of~\cite{hudson2012}.
Section~\ref{prove} subsequently provides the linearized equation that the point
defect satisfies, gives an expression for the Green's matrix associated to this
equation, and then proves our main result.

\section{Model and Main Results} \label{model}

A multilattice is a union of shifted Bravais lattices: we fix $\mF \in
\mathbb{R}^{d \times d}$ with $\det(\mF) = 1$, $d \in \{2,3\}$
and $p_0, \ldots, p_{S-1} \in \mathbb{R}^d$ with $p_0 = 0$ and
define a multilattice $\mathcal{M}$ by
\[
\mathcal{M} := \bigcup_{\alpha = 0}^{S-1} \left(\mF \mathbb{Z}^d + p_\alpha \right).
\]
The set $\mF \mathbb{Z}^d$ is a Bravais lattice and comprises the set of \textit{sites} in the lattice; we denote it by $\mathcal{L} := \mF \mathbb{Z}^d$.  (The conditions $\det(\mF) = 1$ and $p_0 = 0$ are merely for convenience of notation and do not restrict the generality of the analysis.) Deformations and displacements of atoms of species $\alpha$ at site $\xi \in \mathcal{L}$ are, respectively, denoted by $y_\alpha(\xi):\mathbb{R}^d \to \mathbb{R}^n$ and $u_\alpha(\xi):\mathbb{R}^d \to \mathbb{R}^n$, where we permit $n = d$ or $n = d + 1$ when $d = 2$.    The set of all $S$ deformations and displacements are denoted by $\bm{y}(\xi):\mathcal{L}^S \to \mathbb{R}^n$ and $\bm{u}(\xi):\mathcal{L}^S \to \mathbb{R}^n$ where $\mathcal{L}^S = \mathcal{L} \times \cdots \times \mathcal{L}$.

To describe interactions between atoms, we define a finite difference notation (on either deformations or displacements) indexed by
\begin{align*}
   D_{\triple}\bm{u}(\xi) &:= u_\beta(\xi + \rho) - u_\alpha(\xi),
   \qquad \text{where } \\
   \triple &\in \mathcal{L} \times \{0, \ldots, S-1\} \times \{0, \ldots, S-1\}.
\end{align*}
The collection of finite differences describing the interaction of a
site $\xi$ is denoted by
\[
D\bm{u}(\xi) := \left(D_{\triple}\bm{u}(\xi)\right)_{\triple \in \mathcal{R}},
\]
where $\mathcal{R} \subset \mathcal{L} \times \{0, \ldots, S-1\} \times \{0,
\ldots, S-1\} \setminus \bigcup_{\alpha = 0}^{S-1}\{ (0\alpha\alpha)\}$ is a finite interaction range satisfying the conditions
\begin{align}
   & {\rm span}\{ \rho \,|\, (\rho\alpha\alpha) \in \mathcal{R} \} = \R^d \text{ for all $\alpha \in \mathcal{S}$}, \label{cond1} \\
   & (0\alpha\beta) \in \mathcal{R} \quad \text{for all $\alpha \neq \beta \in \mathcal{S}$ } \label{cond2}.
\end{align}
These two conditions, as well as a further condition \eqref{assumption:mesh}
are made for convenience of notation but do not restrict generality since
we can always enlarge the interaction range $\mathcal{R}$ to satisfy them.
For future reference, we denote the projection of $\mathcal{R}$ onto the
lattice component by
\begin{equation*}
   \mathcal{R}_1 := \big\{ \rho \in \mathcal{L} \,|\, \exists \triple \in \mathcal{R} \big\}
\end{equation*}
and finite differences on individual displacements, $u_\alpha$, by
\begin{equation*}
D_\rho u_\alpha (\xi) := u_\alpha(\xi + \rho) - u_\alpha(\xi), \quad Du_\alpha(\xi) := \big(D_\rho u_\alpha(\xi)\big)_{\rho \in \mathcal{R}_1}.
\end{equation*}

We assume that the atomistic energy may be written (formally) as a sum of site potentials,
\[
\hat{\mathcal{E}}^\a(\bm{y}) := \sum_{\xi \in \mathcal{L}} \hat{V}_{\xi}(D\bm{y}(\xi)),
\]
where the site potential, $\hat{V}_\xi$, is assumed to satisfy:
\def\Rdef{R_{\rm def}}
\begin{enumerate}
\item[V.1] There exists $\Rdef > 0$ such that for all $|\xi| \geq \Rdef$, $\hat{V}_\xi \equiv \hat{V}$ does not depend on $\xi$.  This assumption is valid for point defects located near the origin.
\end{enumerate}

For the atomistic energy functional to be well-defined (i.e. finite), we will
consider an energy difference functional defined on displacements, $\bm{u}$,
from a reference state, $\bm{y}(\xi)$, which is defined differently depending on
whether $d = n$ or not. When $d = n$, which models bulk crystals, we set
\[
y_\alpha(\xi) = \xi +p_\alpha,
\]
where each $p_\alpha \in \mathbb{R}^d$. If $d = 2$ and $n = 3$, which is the case when modeling monolayer materials such as graphene, then
we set
\[
y_\alpha(\xi) = \begin{pmatrix}\xi \\ 0\end{pmatrix} +
\begin{pmatrix}p_\alpha \\ 0 \end{pmatrix}.
\]
In the latter case, we will
drop the third component being equal to zero under the understanding that $\xi,
p_\alpha \in \mathbb{R}^d$ are considered as elements in $\mathbb{R}^n$ in this
fashion. Thus, $\xi, p_\alpha$ may either denote vectors in $\R^d$ or $\R^n$,
but it will always be clear from the context what we mean.

This energy difference functional is defined by
\begin{equation}\label{at_energy}
   \mathcal{E}^\a(\bm{u}) :=
   \sum_{\xi \in \mathcal{L}} V_\xi(D\bm{u}(\xi)), \quad V_\xi(D\bm{u}) := \hat{V}_{\xi}(D\bm{y} + D\bm{u}) - \hat{V}(D\bm{y}).
\end{equation}

An auxiliary energy functional needed in the subsequent analysis is the energy of the homogeneous (defect-free) lattice
\[
   \mathcal{E}_{\rm hom}^\a(\bm{u}) :=  \sum_{\xi \in \mathcal{L}} V(D\bm{u}(\xi)),
   \qquad V(D\bm{u}) := \hat{V}(D\bm{y}+D\bm{u}) - \hat{V}(D\bm{y}).
\]

Arguments of the site potentials are indexed by $\triple \in \mathcal{R}$.
Given $\triple, \tripleTau \in \mathcal{R}$ and $\bm{g} = (\bm{g}_{\triple})_{\triple \in \mathcal{R}} \in (\R^n)^{\mathcal{R}}$, we will denote derivatives of $V_{\xi}$ (or $\hat{V}_\xi$) by
\begin{align*}
[V_{\xi,\triple}(\bm{g})]_{i} :=~& \frac{\partial V_\xi(\bm{g})}{\partial \bm{g}_{\triple}^i }, \quad i = 1,\ldots, n, \\
V_{\xi,\triple}(\bm{g}) :=~& \frac{\partial V_\xi(\bm{g})}{\partial \bm{g}_{\triple}}, \\
[V_{\xi,\triple\tripleTau}(\bm{g})]_{ij} :=~& \frac{\partial^2 V_\xi(\bm{g})}{\partial \bm{g}_{\tripleTau}^j \partial \bm{g}_{\triple}^i},  \quad i,j = 1,\ldots, n, \\
V_{\xi,\triple\tripleTau}(\bm{g}) :=~& \frac{\partial^2 V_\xi(\bm{g})}{\partial \bm{g}_{\tripleTau} \partial \bm{g}_{\triple}},
\end{align*}
with higher order derivatives defined analogously. Moreover, it will later be notationally convenient to consider derivatives with $\triple \notin \mathcal{R}$, in which case
\[
V_{\xi,\triple}(\bm{g}) = 0,
\]
and so on for higher order derivatives.  With this notation, the site potential is additionally assumed to satisfy the following differentiability assumption:
\begin{enumerate}
\item[V.2] Each $\hat{V}_\xi:(\mathbb{R}^n)^\mathcal{R} \to \mathbb{R}$ is four times
         continuously differentiable with uniformly bounded derivatives.
\end{enumerate}

The function space on which $\mathcal{E}^\a$ will be defined is a quotient space of a set of discrete displacements having a finite ``energy'' norm,
\[
\| \bm{u}\|_{\a_1}^2 := \sum_{\xi \in \mathcal{L}} |D\bm{u}(\xi)|_\mathcal{R}^2,
 \qquad \text{where }  |D\bm{u}|_{\mathcal{R}}^2 := \sum_{\triple \in \mathcal{R}} |D_{\triple} \bm{u}(\xi)|^2.
\]
In view of \eqref{cond1} and \eqref{cond2}, $\|\bm{u}  \|_{\a_1} = 0$
if and only if there exists $v \in \R^n$ such that $u_\alpha = v$ for all
$\alpha = 0, \dots, S-1$.

Because of the translation invariance of $\mathcal{E}^\a(\bm{u})$ we will define it
on the quotient space
\begin{align*}
   \bm{\mathcal{U}} := \mathcal{U}/ \mathbb{R}^n, \qquad \text{where}
   \quad \mathcal{U} := \left\{\bm{u}:\mathcal{L}^S  \to \mathbb{R}^n, \|\bm{u}\|_{\a_1} < \infty   \right\}.
\end{align*}
Proving that $\mathcal{E}^\a$ is well defined on this space will rely on density of the space of compactly supported test functions, $\bm{\mathcal{U}}_0$, defined by
\begin{align*}
\mathcal{U}_0 :=~& \left\{\bm{u} \in \mathcal{U} : Du_0, u_\alpha - u_0 \,
               \mbox{have compact support for each $\alpha$} \right\}, \\
\bm{\mathcal{U}}_0 :=~& \mathcal{U}_0 / \mathbb{R}^n.
\end{align*}
It is straightforward to establish that $\bm{\mathcal{U}}_0$ is dense in
$\bm{\mathcal{U}}$; see Lemma~\ref{lem:dense} for a proof.

It is clear that $\mathcal{E}^\a$ and $\mathcal{E}^\a_{\rm hom}$ are
well-defined on $\bm{\mathcal{U}}_0$ since only finitely many summands will be
nonzero in this case.  Our choice of function space, $\bm{\mathcal{U}}$, is
justified in the following theorem, and we will prove below in Lemma
\ref{th:equil_shift_cor} that the hypothesis of the theorem is in fact
equivalent to the lattice energy per unit volume being minimized over the
internal shifts. This implies, in particular that \eqref{ostrich1} is
straightforward to enforce in practical computations.

\begin{theorem}\label{well_defined}
If the reference configuration $\bm{y}$ with $y_\alpha(\xi) = \xi + p_\alpha$ is an equilibrium of the defect free energy, that is,
\begin{equation}\label{ostrich1}
\sum_{\xi \in \mathcal{L}} \sum_{\triple \in \mathcal{R}} \hat{V}_{,\triple}(D\bm{y}(\xi)) \cdot D\bm{v}(\xi) = 0, \quad \forall \, \bm{v} \in \bm{\mathcal{U}}_0,
\end{equation}
then the energy functionals, $\mathcal{E}^\a_{\rm hom}(\bm{u})$ and $\mathcal{E}^\a(\bm{u})$, can be uniquely extended to continuous functions on $\bm{\mathcal{U}}$ which are well-defined and ${\rm C}^3$ on $\bm{\mathcal{U}}$.
\end{theorem}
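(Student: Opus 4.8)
The plan is to reduce the statement to the homogeneous functional plus a finite smooth correction, and then to exploit the equilibrium condition \eqref{ostrich1} to remove the single obstruction to convergence. First observe that all site potentials depend only on the finite differences $D\bm u$, so every functional is invariant under the global translations $\mathbb{R}^n$ and descends to the quotient $\bm{\mathcal{U}} = \mathcal{U}/\mathbb{R}^n$ without further comment. Next, since $V_\xi(0)=0$ for $|\xi|\ge\Rdef$, the defect contribution
\begin{equation*}
\mathcal{E}^\a(\bm u) - \mathcal{E}^\a_{\rm hom}(\bm u) = \sum_{|\xi|<\Rdef}\big(V_\xi(D\bm u(\xi)) - V(D\bm u(\xi))\big)
\end{equation*}
is a finite sum of compositions of the $\mathrm{C}^4$ maps $V_\xi, V$ with the bounded linear evaluations $\bm u \mapsto D\bm u(\xi)$; it is therefore automatically $\mathrm{C}^3$ (indeed $\mathrm{C}^4$) on all of $\bm{\mathcal{U}}$. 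Hence it suffices to treat $\mathcal{E}^\a_{\rm hom}$ and add this correction back at the end.

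For the homogeneous energy I would Taylor expand the single site potential about the origin, separating the first-order term,
\begin{equation*}
V(D\bm u(\xi)) = \sum_{\triple\in\mathcal R} V_{,\triple}(0)\cdot D_\triple\bm u(\xi) + G(D\bm u(\xi)), \qquad V(0)=0,
\end{equation*}
where $G(\bm g) := V(\bm g) - \sum_{\triple} V_{,\triple}(0)\cdot \bm g_{\triple}$ collects the quadratic and higher terms and, by Taylor's theorem together with V.2, satisfies $|G(\bm g)| \le C|\bm g|_{\mathcal R}^2$. The crucial point is that $V_{,\triple}(0) = \hat V_{,\triple}(D\bm y)$ is independent of $\xi$, so the linear term summed over $\xi$ is exactly the left-hand side of \eqref{ostrich1}; by the equilibrium hypothesis it vanishes for every $\bm u \in \bm{\mathcal{U}}_0$. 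Defining $\widetilde{\mathcal E}(\bm u) := \sum_\xi G(D\bm u(\xi))$, I then have $\mathcal{E}^\a_{\rm hom} = \widetilde{\mathcal E}$ on the dense subspace $\bm{\mathcal{U}}_0$ (Lemma~\ref{lem:dense}), whereas $\widetilde{\mathcal E}$ converges absolutely on all of $\bm{\mathcal{U}}$ because $\sum_\xi |G(D\bm u(\xi))| \le C\sum_\xi |D\bm u(\xi)|_{\mathcal R}^2 = C\|\bm u\|_{\a_1}^2 < \infty$. Once $\widetilde{\mathcal E}$ is shown continuous, it is the unique continuous extension of $\mathcal{E}^\a_{\rm hom}$, which settles well-definedness.

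It remains to establish that $\widetilde{\mathcal E}$ is $\mathrm{C}^3$. The idea is to differentiate the series termwise and bound each formal derivative as a bounded multilinear form, using the embedding $\sup_\xi |D\bm v(\xi)|_{\mathcal R} \le \|\bm v\|_{\a_1}$ which follows directly from the definition of the norm. The first derivative is $\langle \delta\widetilde{\mathcal E}(\bm u), \bm v\rangle = \sum_\xi\big(V_{,\triple}(D\bm u(\xi)) - V_{,\triple}(0)\big)\cdot D_\triple\bm v(\xi)$; the subtraction forces the coefficient to be $O(|D\bm u(\xi)|_{\mathcal R})$, so Cauchy--Schwarz yields the bound $C\|\bm u\|_{\a_1}\|\bm v\|_{\a_1}$. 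For $2\le k\le 4$ the $k$-th derivative is the $k$-linear form $\sum_\xi V_{,(k)}(D\bm u(\xi))[D\bm v_1(\xi),\dots,D\bm v_k(\xi)]$, bounded by $C\prod_j \|\bm v_j\|_{\a_1}$ after estimating $k-2$ of the factors in $\ell^\infty$ via the embedding and the remaining two by Cauchy--Schwarz, with the constant controlled uniformly in $\bm u$ by V.2. Uniform convergence of these differentiated series on bounded subsets of $\bm{\mathcal{U}}$ justifies termwise differentiation up to fourth order, and the uniform bound on the fourth derivative renders the third derivative Lipschitz, hence continuous; this is precisely why one obtains $\mathrm{C}^3$ rather than higher regularity from the $\mathrm{C}^4$ assumption. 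Adding back the finite smooth correction gives the same conclusion for $\mathcal{E}^\a$.

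The main obstacle is the linear term: for general $\bm u \in \bm{\mathcal{U}}$ the quantity $\sum_\xi V_{,\triple}(0)\cdot D_\triple\bm u(\xi)$ need not converge, since its coefficients are constant in $\xi$ while $D\bm u$ lies only in $\ell^2$, not $\ell^1$. Everything hinges on the equilibrium condition \eqref{ostrich1}, which annihilates precisely this term on the dense test space $\bm{\mathcal{U}}_0$ and thereby permits replacing the energy by the manifestly convergent functional $\widetilde{\mathcal E}$; note that no analogous difficulty arises at second or higher order, where constant coefficients are paired with products of at least two decaying fields and the sums are automatically summable. The secondary technical point is the careful justification of termwise differentiation and of continuity of the top-order derivative, which I expect to handle by the $\ell^2\hookrightarrow\ell^\infty$ embedding and the uniform derivative bounds of V.2.
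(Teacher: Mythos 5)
Your proposal is correct and takes essentially the same route as the paper: your renormalized functional $\widetilde{\mathcal E}$ is precisely the paper's $\bar{\mathcal{E}}^\a_{\rm hom}$ (subtract the linear term, which \eqref{ostrich1} annihilates on $\bm{\mathcal{U}}_0$), after which both arguments conclude by absolute convergence of the quadratic remainder, density of $\bm{\mathcal{U}}_0$ with uniqueness of the continuous extension, and ${\rm C}^3$ regularity via Taylor expansions controlled by the uniform derivative bounds of V.2 together with Cauchy--Schwarz.
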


\begin{remark}\label{rem:renormalisation}
   The proof of Theorem~\ref{well_defined} is based on the idea that,
   for $\bm{u} \in \bm{\mathcal{U}}_0$,
   $\mathcal{E}^\a_{\rm hom}(\bm{u}) = \bar{\mathcal{E}}^\a_{\rm hom}(\bm{u}))$, where
   \[
      \bar{\mathcal{E}}^\a_{\rm hom}(\bm{u}) := \sum_{\xi \in \mathcal{L}} \big[V(D\bm{u}(\xi)) - \sum_{\triple \in \mathcal{R}}V_{,\triple}(D\bm{y}(\xi))\cdot D_{\triple}\bm{u}(\xi)\big].
   \]
   While $\mathcal{E}^\a_{\rm hom}$ is well-defined only if $D\bm{u} \in \ell^1$,
   $\bar{\mathcal{E}}^a_{\rm hom}$ is also well-defined for $D\bm{u} \in \ell^2$.
   However, since $\bar{\mathcal{E}}^\a_{\rm hom}$ is the unique continuous extension
   of $\mathcal{E}^\a_{\rm hom}$ from $\bm{\mathcal{U}}_0$ to $\bm{\mathcal{U}}$ we will
   continually use $\mathcal{E}^\a(\bm{u})_{\rm hom}$ (and $\mathcal{E}^\a(\bm{u})$) in lieu of $\bar{\mathcal{E}}^\a(\bm{u})$ (and an analogously defined $\bar{\mathcal{E}}^\a$).

\end{remark}

\medskip

Having established that $\mathcal{E}^\a(\bm{u})$ is well-defined on
the natural energy space $\bm{\mathcal{U}}$, we are interested in
the force equilibrium problem
\begin{equation} \label{eq:equlibrium}
   \<\delta \mathcal{E}^\a(\bm{u}^\infty), \bm{v}\> = 0,
   \qquad \forall \, \bm{v} \in \bm{\mathcal{U}}_0.
\end{equation}
Two important special cases are local minima (stable equilibria) and
index-1 saddles (transition states between stable equilibria). In the present
work we will not go into details about these specific problems but focus on
the regularity of equilibria, i.e., solutions to \eqref{eq:equlibrium}.

Our analysis requires only the following standing assumption:

\begin{assumption}\label{coercive}
   (1) The reference configuration, $\bm{y}$, with $y_\alpha(\xi) = \xi + p_\alpha$
   is a stable equilibrium of $\mathcal{E}^\a_{\rm hom}$, that is, in
   addition to \eqref{ostrich1} we require that there exists $\gamma_\a > 0$
   such that
   \begin{equation} \label{eq:stab-hom}
      \<\delta^2\mathcal{E}^\a_{\rm hom}(0)\bm{v},\bm{v}\> \geq~ \gamma_\a\|\bm{v}\|_{\a_1}^2 , \quad \forall \, \bm{v} \in \bm{\mathcal{U}}_0.
   \end{equation}

   (2) There exists a solution $\bm{u}^\infty \in \bm{\mathcal{U}}$
   to \eqref{eq:equlibrium}.
\end{assumption}

\begin{remark}
   Note that Assumption~\ref{coercive} imposes no additional structure on
   solutions $\bm{u}^\infty$ but only on the reference state.
   Physically, the requirement \eqref{eq:stab-hom} is a minimal assumption on
   the stability of lattice waves, called {\em phonon stability}, made throughout the
   solid state physics literature~\cite{born1954}, and is almost universally
   reasonable.

	Moreover, one can readily show (see Lemma~\ref{freeStable} in the appendix
	or~\cite[Section 2.2]{Ehrlacher2013} for a related result for Bravais
	lattices) that, if there exists {\em any} stable equilibrium of
	$\mathcal{E}^\a$, then \eqref{eq:stab-hom} holds as well.
\end{remark}

\medskip

The decay rates we prove in Theorem~\ref{main_thm} below are formulated
in terms of the finite difference notation
\begin{align*}
   D_{\rho}u_\alpha(\xi) &:= u_\alpha(\xi + \rho) - u_\alpha(\xi)
   \qquad \text{for }
   \rho \in \mathcal{L}, \quad \alpha \in \mathcal{S}, \qquad \text{and} \\
   D_{\bm{\rho}} u_\alpha(\xi) &:= D_{\rho_1}D_{\rho_2}\cdots D_{\rho_k} u_\alpha(\xi)
   \qquad \text{for } \bm{\rho} = (\rho_1, \dots, \rho_k) \in \mathcal{L}^k.
\end{align*}
We interpret the finite differences $D_\rho u$ as an ``atomistic strain'' and
the higher order differences as discrete strain gradients.

\begin{theorem}[Decay of Displacements and Shifts]\label{main_thm}
   Suppose that Assumption~\ref{coercive} holds and set
   $U^\infty = u_0^\infty, p_\alpha^\infty = u_\alpha^\infty - u_0^\infty$. Then
   \begin{equation}\label{decay1}
      \begin{split}
      \big|D_{\bm{\rho}} U^\infty(\xi)\big| \lesssim~& (1 + |\xi|)^{1-d-j},
            \quad \forall \bm{\rho} \in (\mathcal{R}_1)^j, 1 \leq j \leq 3, \quad \text{and} \\
      \big|D_{\bm{\rho}} p_\alpha^\infty(\xi)\big| \lesssim~& (1 + |\xi|)^{-d-j},
          \quad \forall \bm{\rho} \in (\mathcal{R}_1)^j, 0 \leq j \leq 2.
      \end{split}
   \end{equation}
\end{theorem}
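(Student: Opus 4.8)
The plan is the one sketched in the introduction: linearize \eqref{eq:equlibrium} about the homogeneous reference, diagonalize the resulting operator with the Fourier transform reviewed in Section~\ref{s:semi}, and read off the decay rates from the singularity of the Green's matrix at the origin of the Brillouin zone. First I would set $H := \delta^2\mathcal{E}^\a_{\rm hom}(0)$ and rewrite \eqref{eq:equlibrium} as
\[
\< H\bm{u}^\infty, \bm{v}\> = \< \mathcal{F}^{\rm def} + \mathcal{F}^{\rm nl}, \bm{v}\>, \qquad \forall\,\bm{v}\in\bm{\mathcal{U}}_0,
\]
where $\mathcal{F}^{\rm def} := \delta\mathcal{E}^\a_{\rm hom}(\bm{u}^\infty) - \delta\mathcal{E}^\a(\bm{u}^\infty)$ and $\mathcal{F}^{\rm nl} := H\bm{u}^\infty - \delta\mathcal{E}^\a_{\rm hom}(\bm{u}^\infty)$. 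By V.1 the integrand defining $\mathcal{F}^{\rm def}$ vanishes for $|\xi|\ge\Rdef$, so $\mathcal{F}^{\rm def}$ is compactly supported and its Fourier transform is analytic; by V.2 and Taylor expansion $\mathcal{F}^{\rm nl}$ is controlled by $|D\bm{u}^\infty|^2$, i.e.\ it is quadratic in the strains. Translation invariance of $\mathcal{E}^\a_{\rm hom}$ (equivalently \eqref{ostrich1}) forces the total force to vanish, which in the Fourier picture below means the acoustic (centre-of-mass) component of $\widehat{\mathcal{F}}$ vanishes at $k=0$; this single fact is the source of the extra power of decay enjoyed by the strains.

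Passing to Fourier variables $k$ and using the coordinates $(U,\bm{p})$ with $U=u_0$ and $p_\alpha=u_\alpha-u_0$ adapted to the statement, the operator $H$ becomes multiplication by a Hermitian matrix which I would write in block form as
\[
\widehat{H}(k) = \begin{pmatrix} A(k) & B(k) \\ B(k)^* & D(k) \end{pmatrix},
\]
the first block acting on the acoustic component $\widehat{U}$ and the second on the optical (shift) components $\widehat{\bm{p}}$. Because a uniform translation $(U,\bm{0})$ is an exact zero mode, $A(0)=0$ and $B(0)=0$; since $\widehat H(k)$ is positive semidefinite its principal block $A(k)\succeq 0$ has a minimum at $k=0$, so $A(k)=O(|k|^2)$ and $B(k)=O(|k|)$, while conditions \eqref{cond1}--\eqref{cond2} together with \eqref{eq:stab-hom} make $D(0)$ positive definite (the case $n=d+1$ needs nothing special: \eqref{eq:stab-hom}, posed in the first-difference energy norm, pins every acoustic branch, including any out-of-plane one, to the same $|k|^2$ degeneracy). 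Eliminating $\widehat{\bm{p}}$ by a Schur complement yields the effective acoustic operator $A(k)-B(k)D(k)^{-1}B(k)^*$, whose leading term is $\tfrac12 k^\transpose\mathbb{C}k$ with $\mathbb{C}$ the shift-relaxed Cauchy--Born elasticity tensor; positive definiteness of $\mathbb{C}$, and invertibility of $\widehat{H}(k)$ for all $k\neq0$, are exactly the Cauchy--Born/energy-norm stability consequences of phonon stability established in Section~\ref{s:semi}.

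Solving the block system near $k=0$, and writing $\widehat{\mathcal{F}}=(\widehat{\mathcal{F}}_U,\widehat{\mathcal{F}}_p)$ in the same splitting with $\widehat{\mathcal{F}}_U(0)=0$, gives
\[
\widehat{U}(k) = O(|k|^{-1}), \qquad \widehat{\bm{p}}(k) = \underbrace{D(k)^{-1}\widehat{\mathcal{F}}_p(k)}_{\text{smooth}} - D(k)^{-1}B(k)^*\,\widehat{U}(k).
\]
Applying a finite difference $D_\rho$ multiplies by the symbol $e^{\i k\cdot\rho}-1=O(|k|)$, so $\widehat{D_{\bm{\rho}}U}$ carries a singularity of homogeneity degree $j-1$ for $\bm{\rho}\in(\mathcal{R}_1)^j$, while $\widehat{D_{\bm{\rho}}\bm{p}}$ carries degree $j$ (the smooth part of $\widehat{\bm{p}}$, though nonzero at $k=0$, contributes only rapidly decaying terms). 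The conversion lemma from Section~\ref{s:semi}---if $|\partial_k^m\widehat{g}(k)|\lesssim|k|^{s-m}$ near $0$ and $\widehat{g}$ is smooth away from $0$, then $|g(\xi)|\lesssim(1+|\xi|)^{-(d+s)}$---then yields $|D_{\bm{\rho}}U^\infty(\xi)|\lesssim(1+|\xi|)^{1-d-j}$ and $|D_{\bm{\rho}}p_\alpha^\infty(\xi)|\lesssim(1+|\xi|)^{-d-j}$, which are precisely \eqref{decay1}.

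Two points need care. First, $\mathcal{F}^{\rm nl}$ depends on the unknown $\bm{u}^\infty$, so the estimate must be closed by a bootstrap: starting from $D\bm{u}^\infty\in\ell^2$ (which holds because $\bm{u}^\infty\in\bm{\mathcal{U}}$), the quadratic bound $|\mathcal{F}^{\rm nl}|\lesssim|D\bm{u}^\infty|^2$ decays twice as fast as the current strain estimate; reinserting it into the linear solve improves the rate, and after finitely many iterations the rate saturates at the elastic rate set by the compactly supported $\mathcal{F}^{\rm def}$, with the higher orders $j\le 3$ (resp.\ $j\le 2$) reached by differentiating the equation. Second---and this is the genuinely new, and hardest, ingredient relative to the Bravais case \cite{Ehrlacher2013}---is establishing the block/Schur structure of $\widehat{H}(k)$: that $\widehat{H}(k)$ is invertible for every $k\neq0$, that the shift-relaxed acoustic block degenerates precisely like $|k|^2$ with positive-definite leading coefficient, and that the optical block stays uniformly positive, all as consequences of the single hypothesis \eqref{eq:stab-hom}. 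This is where the phonon-stability$\,\Leftrightarrow\,$energy-norm-stability equivalence and conditions \eqref{cond1}--\eqref{cond2} are indispensable, and it is the structural feature responsible for the relative shifts inheriting the atomistic strain rate $r^{-d}$ rather than the slower decay of the displacement field itself.
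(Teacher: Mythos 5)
Your outline reproduces the paper's strategy faithfully in all but one step: the linearization into a compactly supported defect force plus a residual quadratic in the strain is Theorem~\ref{decay_cor1}; the block dynamical matrix, the spectral bounds $H_{00}(k)\gtrsim|k|^2$ and $H_{\bm{p}\bm{p}}\gtrsim 1$, the Schur complement $Q=H_{00}-H_{0\bm{p}}H_{\bm{p}\bm{p}}^{-1}H_{\bm{p}0}$ compared against its Cauchy--Born counterpart $M$, and the resulting kernel decay are exactly the paper's Theorem~\ref{hess_thm}; and your symbol-to-decay conversion lemma is Corollary~\ref{lem:space1A}. The gap is in how you close the nonlinearity, which is the most delicate part of the whole proof.

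Your bootstrap --- ``the quadratic bound decays twice as fast as the current strain estimate; reinserting it into the linear solve improves the rate, and after finitely many iterations the rate saturates'' --- cannot start. The only a priori information is $D\bm{u}^\infty\in\ell^2$, which yields $\sup_{|\xi|\ge r}|D\bm{u}^\infty(\xi)|\to 0$ but \emph{no algebraic pointwise rate whatsoever}, so there is no ``current strain estimate'' to double; a rate-improvement iteration starting from rate zero produces rate zero at every step. (A secondary obstruction: to reinsert a pointwise-decaying residual into your Fourier-space solve via Corollary~\ref{lem:space1A} you need bounds on $\nabla^m\hat f$, i.e.\ weighted $\ell^1$ control of $f$, which pointwise decay alone does not give.) The paper's Part I of the proof closes the loop by a different mechanism: it works entirely in real space, convolving the residual against the kernels whose decay Theorem~\ref{hess_thm} has already established, defines $w(r)=\sup_{|\ell|\ge r}|DU^\infty(\ell)|$ and $q(r)=\sup_{|\ell|\ge r}\max_{\rho\in\mathcal{R}_1}|T_\rho \bm{p}^\infty(\ell)|$, and derives the \emph{superlinear} recursion $w(2r)+q(2r)\lesssim (1+r)^{-d}+w(r)^{3/2}+q(r)^{3/2}$, obtained by splitting the convolution into near and far regions and estimating $|f|\lesssim|D\bm{u}^\infty|^2\le \big(w(r)+q(r)\big)^{3/2}|D\bm{u}^\infty|^{1/2}$ on the far region, where $(1+|\xi|)^{-d}|D\bm{u}^\infty(\ell-\xi)|^{1/2}$ is summable by H\"older. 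The rate $r^{-d}$ then follows from \cite[Lemma 6.3, Step 2]{Ehrlacher2013}: the gain per dyadic step comes from the smallness of $w(r)^{1/2}+q(r)^{1/2}$ (small only because $D\bm{u}^\infty\in\ell^2$), i.e.\ from the exponent $3/2>1$, not from any per-iteration rate doubling. Once the base rate $-d$ is in hand, your description of the remaining steps is sound and matches the paper's Part II: the residual then satisfies $|f(\xi)|\lesssim(1+|\xi|)^{-2d}$, and the higher-order estimates in \eqref{decay1} follow by directly convolving the differenced kernels of Theorem~\ref{hess_thm} against it.
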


In the statement of the theorem, we have used the modified Vinogradov notation $A \lesssim B$ to mean there exists a constant $c > 0$ such that $A \leq cB$.  The implied constant here (and throughout the remainder of the paper) is allowed to depend upon the interatomic potential, interaction range, and stability constant $\gamma_\a$.

The rest of the paper is devoted to proving Theorem~\ref{main_thm}. We will
first exhibit a linearized equation which $\bm{u}^\infty$ satisfies and prove
decay rates for the Green's function associated with this linearized problem.  The
key point in proving the decay rates for the Green's function will be connecting
$L^1$ integrability of a function's Fourier transform with $L^\infty$ decay of
the original function.  Meanwhile, the $L^1$ estimates in Fourier space are
obtained by comparing the atomistic Green's function with the
\textit{Cauchy--Born} continuum Green's function.

\begin{remark}[Other point defects]
   Although superficially we have only included an impurity defect in defining our model energy,
   Theorem~\ref{main_thm} actually applies to arbitrary point defects, including
   for example vacancies and interstitials.

   To see this, consider a defective lattice, $\mathcal{L}^{\rm def}$, with a ``defect core radius,'' $R_{\rm def}$,
   such that $\mathcal{L} \setminus B_{R_{\rm def}} = \mathcal{L}^{\rm def} \setminus B_{R_{\rm def}}$,
   and let $u^{\rm def} : \mathcal{L}^{\rm def} \to \R^n$ be an equilibrium
   of an energy functional analogous to $\mathcal{E}^\a$, in particular employing
   the same homogeneous potential $V$ in $\mathcal{L} \setminus B_{R_{\rm def}}$.
   Then, projecting $u^{\rm def}$ to a displacement $u : \mathcal{L} \to \R^n$
   with $u(\xi) = u^{\rm def}(\xi)$ in $\mathcal{L} \setminus B_{R_{\rm def}}$, we obtain a new
   displacement satisfying
   \[
   \frac{\partial \mathcal{E}^\a_{\rm hom}(u)}{\partial u_\alpha(\xi)} = 0,
      \qquad \forall \, |\xi| \geq R_{\rm def}',
   \]
   for some $R_{\rm def}' \geq 0$
   but potentially non-zero forces in $B_{R_{\rm def}'}$. By
   defining $V_\xi(Du) = V(Du) + \bm{g}_\xi \cdot Du$
   with suitable $\bm{g} \in (\R^n)^{\mathcal{R}}$ for $\xi \in B_{R_{\rm def}}$,
   we are put precisely in the context of Theorem~\ref{main_thm}, and thus
   the decay estimates again apply.
\end{remark}

\section{Preliminaries}\label{s:semi}

In this section we collect a range of auxiliary results that are required in
the proof of Theorem~\ref{main_thm}.

\subsection{Continuous interpolants of lattice functions}
It is often useful to identify lattice functions with continuous
interpolants. To define these, we divide the unit cell $\mF[0,1]^d$ into
simplices (triangles in $2D$ and tetrahedra in $3D$) so that each vertex of a
simplex is one of the vertices of $\mF[0,1]^d$. A simplicial decomposition,
$\mathcal{T}_\a$, of $\mathcal{L}$ is completed by performing the same
decomposition on the translated cells $\xi + \mF[0,1]^d$ for $\xi \in
\mathcal{L}$. Note that this can be done in such a way that $\mathcal{T}_\a$
is {\em regular}.

For $u: \mathcal{L} \to \mathbb{R}^n$, we then denote the continuous
interpolant of $u$ with respect to $\mathcal{T}_\a$ by $Iu$.  We will also write
 $I\bm{u} = (Iu_\alpha)_{\alpha = 0}^{S-1}$. By possibly enlarging $\mathcal{R}$
 we may assume without loss of generality that
 \begin{equation} \label{assumption:mesh}
     \text{if } {\rm conv}\{ \xi, \xi+\rho \} \text{ is an edge of $\mathcal{T}_\a$,
     then } \rho \in \mathcal{R}_1.
 \end{equation}

This construction gives rise to a natural alternative norm
for multilattice displacements,
\[
   \|\bm{u}\|_{\a_2} := \|\nabla I u_0\|_{L^2(\mathbb{R}^d)} + \sum_{\alpha = 0}^{S-1}\|Iu_\alpha - Iu_0\|_{L^2(\mathbb{R}^d)},
\]
which turns out to be equivalent to $\|\cdot\|_{\a_1}$.

\begin{lemma}
   The norms, $\|\cdot \|_{\a_1}$ and $\| \cdot \|_{\a_2}$, are equivalent on
   the set of multilattice displacements $\bm{u}:\mathcal{L}^S \to
   \mathbb{R}^n$.
\end{lemma}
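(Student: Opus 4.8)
The plan is to show that each of $\|\cdot\|_{\a_1}$ and $\|\cdot\|_{\a_2}$ is equivalent to one composite quantity
\[
Q(\bm u)^2 := \sum_{\rho \in \mathcal{R}_1}\sum_{\xi} |D_\rho u_0(\xi)|^2 \;+\; \sum_{\alpha}\sum_{\xi} |u_\alpha(\xi) - u_0(\xi)|^2,
\]
which splits the norm into a part built from finite differences of the base species $u_0$ (the ``acoustic'' part) and a part built from the relative shifts $u_\alpha - u_0$ (the ``optical'' part). The two claimed equivalences then follow by combining halves. Throughout, the implied constants will be uniform because $\mathcal{T}_\a$ is regular and periodic, so there are only finitely many simplex shapes up to translation.

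First I would record two standard finite-element facts. (i) For any lattice function $w$, the $L^2$ norm of its $P_1$ interpolant is equivalent to its discrete norm, $\|Iw\|_{L^2(\R^d)}^2 \simeq \sum_\xi |w(\xi)|^2$ (spectral equivalence of the periodic, shape-regular mass matrix with the lumped mass matrix). Applying this to $w = u_\alpha - u_0$ and using linearity, $Iu_\alpha - Iu_0 = I(u_\alpha-u_0)$, identifies $\sum_\alpha\|Iu_\alpha - Iu_0\|_{L^2}^2$ with the optical part of $Q$. (ii) The gradient of a $P_1$ interpolant satisfies $\|\nabla I u_0\|_{L^2}^2 \simeq \sum_{\rho\in\mathcal{R}_1}\sum_\xi |D_\rho u_0(\xi)|^2$: on each simplex $\nabla I u_0$ is constant and is a fixed invertible linear combination of the edge differences there, while every $\rho \in \mathcal{R}_1$ can be written as a bounded-length path of mesh edges (by \eqref{assumption:mesh} the edges themselves lie in $\mathcal{R}_1$), so a telescoping/Cauchy--Schwarz argument gives both inequalities. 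Together with the equivalence of the $\ell^1$ and $\ell^2$ norms of the finitely many summands defining $\|\cdot\|_{\a_2}$, these yield $\|\bm u\|_{\a_2} \simeq Q(\bm u)$.

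It then remains to prove $\|\bm u\|_{\a_1} \simeq Q(\bm u)$, and the one computation driving this is the splitting obtained by inserting $\pm u_0$,
\[
D_{\triple}\bm u(\xi) = D_\rho u_0(\xi) + \big(u_\beta(\xi+\rho) - u_0(\xi+\rho)\big) - \big(u_\alpha(\xi) - u_0(\xi)\big).
\]
For $\|\bm u\|_{\a_1}^2 \lesssim Q(\bm u)^2$ I would square this, use $|a+b+c|^2 \lesssim |a|^2+|b|^2+|c|^2$, and sum over $\xi$ and over the finitely many $\triple \in \mathcal{R}$; each acoustic term is counted at most $S^2$ times and each optical term (after the harmless shift $\xi \mapsto \xi+\rho$) at most $|\mathcal R|$ times, giving $\lesssim Q(\bm u)^2$. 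For the reverse, I would first capture the optical part directly: by \eqref{cond2} one has $(00\alpha)\in\mathcal R$ for every $\alpha\neq0$, and $D_{(00\alpha)}\bm u(\xi) = u_\alpha(\xi)-u_0(\xi)$, so the entire optical part of $Q$ is literally a subsum of $\|\bm u\|_{\a_1}^2$. With the shifts controlled, I would solve the splitting for the acoustic term, $D_\rho u_0(\xi) = D_{\triple}\bm u(\xi) - \big(u_\beta-u_0\big)(\xi+\rho) + \big(u_\alpha-u_0\big)(\xi)$, choosing for each $\rho\in\mathcal{R}_1$ one witness $\triple\in\mathcal R$; squaring and summing bounds the acoustic part by $\|\bm u\|_{\a_1}^2$ plus the already-controlled optical part, hence by $\|\bm u\|_{\a_1}^2$. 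Combining the two equivalences gives $\|\bm u\|_{\a_1} \simeq \|\bm u\|_{\a_2}$.

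The main obstacle is purely bookkeeping, located in step (ii): one must ensure that the path-decomposition of a general $\rho\in\mathcal{R}_1$ into mesh edges, and the reconstruction of each simplex gradient from its edge differences, involve only finitely many geometric configurations, so that the implied constants depend solely on $\mathcal{T}_\a$ and $\mathcal{R}$ and not on $\bm u$ or on position; this is exactly what regularity and periodicity of $\mathcal{T}_\a$ together with \eqref{assumption:mesh} provide. I note that \eqref{cond1} is not needed for the equivalence itself (it serves only to make $\|\cdot\|_{\a_1}$ a genuine norm on the quotient); the equivalence is driven by \eqref{cond2} for the optical part and by the definition of $\mathcal{R}_1$ together with \eqref{assumption:mesh} for the acoustic part.
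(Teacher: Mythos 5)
Your proof is correct, but it runs on a genuinely different mechanism than the paper's. The paper dismisses the direction $\|\cdot\|_{\a_2} \lesssim \|\cdot\|_{\a_1}$ as immediate from \eqref{assumption:mesh}, and proves the reverse direction by a soft, local argument: on the patch $\omega = \bigcup\{T \in \mathcal{T}_\a : T \cap \mathcal{R}_1 \neq \emptyset\}$, both $|D\bm{u}(\xi)|^2$ and the local $\a_2$-quantity are seminorms on a finite-dimensional space of nodal values, the kernel of the right-hand side is contained in that of the left-hand side, so a finite-dimensional comparison yields the local bound \eqref{eq:local norm equiv} with a constant that is uniform in $\xi$ by translation invariance; summing over $\mathcal{L}$ finishes. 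You instead argue entirely explicitly through the intermediate quantity $Q$, which splits into acoustic and optical parts --- incidentally, $Q$ is precisely the real-space analogue of the norm $\|\cdot\|_{\a_3}$ that the paper later introduces in Fourier variables --- and you prove both equivalences by hand: lumped-mass equivalence and edge-difference reconstruction of $\nabla I u_0$ for $\|\cdot\|_{\a_2} \simeq Q$, and insertion of $\pm u_0$ together with the witness triples $(00\alpha) \in \mathcal{R}$ supplied by \eqref{cond2} for $\|\cdot\|_{\a_1} \simeq Q$. What the paper's route buys is brevity: the kernel argument absorbs all the bookkeeping (path decompositions, overlap counting) in one stroke, at the price of non-explicit constants. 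What your route buys is transparency: the constants are in principle computable, and you isolate exactly which hypotheses drive the equivalence --- \eqref{cond2}, \eqref{assumption:mesh}, and the definition of $\mathcal{R}_1$, but not \eqref{cond1} --- a point that is true of the paper's argument as well but is invisible until one unwinds its kernel computation.
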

\begin{proof}
   From \eqref{assumption:mesh} it is clear that $\|\cdot\|_{\a_2} \lesssim \|
   \cdot \|_{\a_1}$. To prove the opposite, let $\omega := \bigcup \{ T \in
   \mathcal{T}_\a | T \cap \mathcal{R}_1 \neq \emptyset \}$
   (the minimal patch of elements $T$ covering the interaction neighbourhood),
   then
    \begin{equation} \label{eq:local norm equiv}
       |D\bm{u}(\xi)|^2 \leq C\Big( \| \D I u_0 \|_{L^2(\xi+\omega)}^2
               + {\textstyle \sum_\alpha} \| Iu_\alpha - Iu_0 \|_{L^2(\xi + \omega)}^2 \Big).
    \end{equation}
   This follows from the fact that both sides of the inequality involve
   only finitely many degrees of freedom and, if the right-hand side
   vanishes, then so does the left-hand side.

   The stated result now follows by summing \eqref{eq:local norm equiv}
   over $\mathcal{L}$.
\end{proof}

\subsection{Semi-discrete Fourier transform for multilattices}

The first Brillouin zone, $\mathcal{B}$, is defined as the Voronoi cell
associated with the origin in the dual lattice, $\mB\mathbb{Z}^d$, with $\mB =
\mF^{-\transpose}$. For a lattice function $u:\mathcal{L} \to \mathbb{R}^n$, the
semidiscrete Fourier transform, and its inverse are, respectively, defined by
\begin{align*}
\hat{u}(k) &= \sum_{\xi \in \mathcal{L}} e^{-2\pi i \xi \cdot k} u(\xi), \quad \mbox{for $k \in \mathcal{B}$,} \qquad
\check{v}(\xi) = \int_{\mathcal{B}} e^{2\pi i \xi \cdot k} v(k)\, dk, \quad \mbox{for $\xi \in \mathcal{L}$.}
\end{align*}
As usual, the discrete Fourier transform is well-defined for
$\ell^1(\mathcal{L})$ functions and otherwise defined through continuity.

The semidiscrete Fourier transform (and its inverse) possesses the usual
transform properties; for the task at hand, the most important of these is the
connection between $L^1$ integrability of a function's (semidiscrete) Fourier
transform and its derivatives and the $L^\infty$ decay of the original function
and its derivatives.

As the first Brillouin zone is a finite domain, and many of the fields involved will be either smooth or only singular at the origin, we will be most concerned with the behavior of the Fourier transform near the origin.  For this reason, we introduce a ``big O notation''
\[
   f(k) = \mathcal{O}(g(k))
   \quad \text{if and only if} \quad
   \exists\, C > 0 \text{ s.t. } |f(k)| \leq C |g(k)| \text{ for all } k \in \mathcal{B},
\]
which is modified from the standard notation in that we require the upper
bound in the entire domain of definition $\mathcal{B}$.

\begin{theorem}\label{decay_thm}
Suppose that $f:\mathcal{L} \to \mathbb{R}^n$ is a function such that $\hat{f}, \nabla \hat{f}, \cdots, \nabla^m \hat{f} \in L^1(\mathcal{B})$, then
\[
   |f(\xi)| \lesssim~ (1 + |\xi|)^{-m}, \quad \forall \, \xi \in \mathcal{L}.
\]
\end{theorem}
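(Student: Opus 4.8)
The plan is to run the inverse semidiscrete Fourier transform and trade powers of $\xi$ in real space for derivatives of $\hat f$ in frequency space, using the $\mB\mathbb{Z}^d$-periodicity of the Brillouin-zone integrand to kill all boundary terms. First I would record the free bound coming straight from the inversion formula: since
\[
   f(\xi) = \int_{\mathcal{B}} e^{2\pi i \xi\cdot k}\,\hat f(k)\,dk,
\]
we have $|f(\xi)| \le \|\hat f\|_{L^1(\mathcal{B})}$ for every $\xi \in \mathcal{L}$. This already yields the claimed estimate on the bounded range $|\xi|\le 1$, and it handles the case $m=0$.

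For the decay at large $|\xi|$ the key step is the identity that monomials in $\xi$ correspond to derivatives of the character: for a multi-index $\alpha \in \mathbb{N}_0^d$,
\[
   \xi^\alpha\, e^{2\pi i \xi\cdot k} = (2\pi i)^{-|\alpha|}\,\partial_k^\alpha\, e^{2\pi i \xi\cdot k}.
\]
Inserting this into the inversion formula and moving the derivatives onto $\hat f$, I would obtain, for $|\alpha| = m$,
\[
   \xi^\alpha f(\xi) = (2\pi i)^{-m}\int_{\mathcal{B}} \big(\partial_k^\alpha e^{2\pi i\xi\cdot k}\big)\,\hat f(k)\,dk = \Big(\tfrac{-1}{2\pi i}\Big)^{m}\int_{\mathcal{B}} e^{2\pi i\xi\cdot k}\,\partial_k^\alpha \hat f(k)\,dk.
\]
Taking absolute values gives $|\xi^\alpha f(\xi)| \le (2\pi)^{-m}\|\partial_k^\alpha \hat f\|_{L^1(\mathcal{B})} \lesssim \|\nabla^m \hat f\|_{L^1(\mathcal{B})}$, and summing over all $|\alpha|=m$ together with the elementary bound $|\xi|^m \lesssim \sum_{|\alpha|=m}|\xi^\alpha|$ yields $|\xi|^m |f(\xi)| \lesssim \|\nabla^m\hat f\|_{L^1(\mathcal{B})}$. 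Combining with the free bound then gives the result: for $|\xi|\le 1$ use $|f(\xi)|\le\|\hat f\|_{L^1}$, while for $|\xi|\ge 1$ use $|f(\xi)|\lesssim |\xi|^{-m}$, and in either regime $|f(\xi)|\lesssim (1+|\xi|)^{-m}$.

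The step I expect to be the main obstacle — and the only place where any care is needed — is the integration by parts, i.e. the claim that no boundary contributions appear. This rests on recognizing $\mathcal{B}$ as a fundamental domain of the torus $\mathbb{T} := \mathbb{R}^d/\mB\mathbb{Z}^d$ and the character $k\mapsto e^{2\pi i\xi\cdot k}$ as a genuinely \emph{periodic} function on it: for $\xi\in\mathcal{L}=\mF\mathbb{Z}^d$, writing $\xi=\mF\ell$, one has $\xi\cdot\mB m = (\mF\ell)\cdot(\mF^{-\transpose}m) = \ell\cdot m \in \mathbb{Z}$ for all $m\in\mathbb{Z}^d$, so the character is invariant under $k\mapsto k+\mB m$. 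Since $\hat f$ is likewise $\mB\mathbb{Z}^d$-periodic, the boundary integrals over opposite faces of $\mathcal{B}$ cancel. In fact, once periodicity is in hand the integration by parts is nothing more than the defining property of the weak derivative: interpreting the hypothesis $\nabla^j \hat f\in L^1$ as $\hat f\in W^{m,1}(\mathbb{T})$, the relation above is exactly the pairing of $\hat f$ with the smooth periodic test function $e^{2\pi i\xi\cdot k}$, so no mollification or separate boundary analysis is required. The only secondary technical point is thus to fix this weak-derivative interpretation of the hypotheses at the outset.
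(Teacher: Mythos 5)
Your proposal is correct and takes essentially the same route as the paper's proof: both convert the monomial weight $\xi^\gamma$ into derivatives of $\hat f$ via the inversion formula and then bound $\|\xi^\gamma f\|_{\ell^\infty}$ by $\|\partial_\gamma \hat f\|_{L^1(\mathcal{B})}$, concluding that $|\xi|^m |f(\xi)|$ is bounded. The only difference is one of detail, not of method: you justify the integration by parts explicitly (periodicity of $\hat f$ and of the character $e^{2\pi i \xi\cdot k}$ over the torus $\mathbb{R}^d/\mB\mathbb{Z}^d$, weak-derivative interpretation of the hypotheses) and spell out the split between $|\xi|\le 1$ and $|\xi|\ge 1$, whereas the paper asserts the corresponding identity as a known fact.
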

\begin{proof}
   The proof uses standard techniques and while related results exist throughout
   the literature~\cite{trefethen2000,rudin1987}, we were unable to find a
   statement of the specificity that we require here, hence we include a
   proof for convenience and completeness.

Let $\gamma$ be any multiindex with $|\gamma| \leq m$.  Then using the fact that
\[
(\partial_\gamma \hat{f})^{\vee}(\xi) = \int_{\mathcal{B}} e^{2\pi i \xi \cdot k} \partial_\gamma \hat{f}(k)\, dk = (2\pi i)^{|\gamma|} \xi^\gamma \int_{\mathcal{B}} e^{2 \pi i \xi \cdot k} \hat{f}(k) = (2\pi i)^{|\gamma|} \xi^\gamma f(\xi)
\]
and
\[
\|f\|_{\ell^\infty} \leq \|\hat{f}\|_{L^1},
\]
we see that
\[
(2\pi)^{|\gamma|} \|\xi^\gamma f(\xi)\|_{\ell^\infty} = \|(2\pi i)^{|\gamma|} \xi^\gamma f(\xi)\|_{\ell^\infty} \leq~ \|\partial_\gamma \hat{f}\|_{L^1}.
\]
This in turn implies $|\xi|^m f(\xi)$ is bounded.
\end{proof}

Since we will later employ Taylor expansions in Fourier space along with operating with
finite differences, a useful (and almost immediate) corollary of this result is
the following.

\begin{corollary}\label{lem:space1A}
   Let $f:\mathcal{L} \to \mathbb{R}^n$, and assume there is an integer $s \geq -1$ such that $\nabla^j\hat{f}(k) = \mathcal{O}(k^{s-j})$ for all nonnegative integers $j$. Then
\begin{align*}
   |D_{\bm{\rho}} f(\xi)| \lesssim~& (1 + |\xi|)^{-s-d+1-t}
   \qquad \text{for} \quad  \xi \in \mathcal{L}, \quad \bm{\rho} \in (\mathcal{R}_1)^t, \quad t \geq 0.
\end{align*}
\end{corollary}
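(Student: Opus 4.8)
The plan is to reduce the statement to a single application of Theorem~\ref{decay_thm} to the function $g := D_{\bm{\rho}} f$. First I would compute the effect of a finite difference on the semidiscrete Fourier transform. Since a direct calculation gives $\widehat{D_\rho f}(k) = (e^{2\pi i \rho \cdot k} - 1)\hat{f}(k)$, iterating over $\bm{\rho} = (\rho_1, \dots, \rho_t)$ yields
\[
   \widehat{D_{\bm{\rho}} f}(k) = M(k)\, \hat{f}(k),
   \qquad M(k) := \prod_{l=1}^{t} \big(e^{2\pi i \rho_l \cdot k} - 1\big).
\]
The symbol $M$ is $C^\infty$ and bounded on $\mathcal{B}$, and each factor vanishes to first order at $k = 0$, so $M$ vanishes to order $t$ there; consequently $\nabla^j M(k) = \mathcal{O}(k^{(t-j)_+})$, where $(\cdot)_+$ denotes the positive part.

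Next I would propagate the hypothesis $\nabla^j \hat{f}(k) = \mathcal{O}(k^{s-j})$ through the product by the Leibniz rule. Writing $\nabla^j(M\hat{f})$ as a sum of terms of the form $\nabla^i M \otimes \nabla^{j-i}\hat{f}$ with $0 \leq i \leq j$, each such term is bounded on $\mathcal{B}$ by a constant times $|k|^{(t-i)_+ + s - (j-i)}$. A short case distinction on whether $i \leq t$ shows the exponent equals $s+t-j$ when $i \le t$ and strictly exceeds it when $i > t$; since $\mathcal{B}$ is bounded, any surplus positive power of $|k|$ is absorbed into the constant. Hence
\[
   \nabla^j \widehat{D_{\bm{\rho}} f}(k) = \mathcal{O}\big(k^{s + t - j}\big)
   \qquad \text{for all } j \geq 0,
\]
so multiplying by $M$ has simply raised the singularity exponent from $s$ to $s+t$.

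Finally I would convert these Fourier-side bounds into a count of integrable derivatives. A function that is $\mathcal{O}(|k|^a)$ near the origin lies in $L^1(\mathcal{B})$ precisely when $a > -d$, by passing to polar coordinates. Applied to the bound above, $\nabla^j \widehat{D_{\bm{\rho}} f} \in L^1(\mathcal{B})$ whenever $s + t - j > -d$, i.e. for all integers $0 \leq j \leq m$ with $m := s + d - 1 + t$. Because $s \geq -1$, $d \in \{2,3\}$ and $t \geq 0$, this $m$ is a nonnegative integer, so Theorem~\ref{decay_thm} applies to $g = D_{\bm{\rho}} f$ and yields $|D_{\bm{\rho}} f(\xi)| \lesssim (1+|\xi|)^{-m} = (1+|\xi|)^{-s-d+1-t}$, as claimed.

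The only genuinely delicate point is the Leibniz estimate in the second step: one must verify that every cross term, \emph{including} the one in which all $j$ derivatives fall on $\hat{f}$ (making $\nabla^{j}\hat{f}$ maximally singular) and none on $M$, is still controlled by the single power $|k|^{s+t-j}$. This is exactly where the order-$t$ vanishing of $M$ must compensate the worst singularity of $\hat{f}$, and where the boundedness of $\mathcal{B}$ is used to discard the milder terms. Everything else is routine bookkeeping of exponents together with the already-established Theorem~\ref{decay_thm}, which is why the corollary is indeed \emph{almost immediate}.
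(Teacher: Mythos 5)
Your proposal is correct and follows essentially the same route as the paper's proof: write $\widehat{D_{\bm{\rho}}f} = M\hat{f}$ with the trigonometric prefactor $M$, establish $\nabla^j(M\hat{f}) = \mathcal{O}(k^{s+t-j})$, and count how many derivatives remain in $L^1(\mathcal{B})$ before invoking Theorem~\ref{decay_thm}. The only difference is presentational: you spell out the Leibniz-rule bookkeeping (with the positive-part refinement $\nabla^i M = \mathcal{O}(k^{(t-i)_+})$) that the paper leaves implicit in its ``Therefore'' step, which is a harmless sharpening since, under the paper's convention that $\mathcal{O}$ bounds hold on all of the bounded set $\mathcal{B}$, the cruder estimate $\mathcal{O}(k^{t-i})$ already suffices.
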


\begin{proof}
   %
   Let $\bm{\rho} = \rho_1\cdots\rho_t \in (\mathcal{R}_1)^t$.
   By Theorem~\ref{decay_thm}, to prove the stated decay, it is sufficient to
   show that
   \[
      \nabla^{j} \widehat{D_{\bm{\rho}} f}(k) \in L^1(\mathcal{B})
      \qquad \text{for} \quad  j = 0, \ldots, t+s+d-1.
   \]

   To that end we first note that
   \begin{align*}
      \widehat{D_{\bm{\rho}} f}(k) &= (e^{2\pi ik\cdot \rho_1} - 1)(e^{2\pi ik\cdot \rho_2} - 1)\cdots(e^{2\pi ik\cdot \rho_t} - 1)\hat{f}.
   \end{align*}
   Next, we observe that
   \[
      \nabla^j \big((e^{2\pi ik\cdot \rho_1} - 1)(e^{2\pi ik\cdot \rho_2} - 1)\cdots(e^{2\pi ik\cdot \rho_t} - 1)\big) = \mathcal{O}(k^{t-j}) \quad \mbox{for $j \geq 0$,}
   \]
   while $\nabla^j\hat{f}(k) = \mathcal{O}(k^{s-j})$ for $j \geq 0$ by
   assumption. Therefore,
   \begin{align*}
      \int_{\mathcal{B}} \left|\nabla^j \big( (e^{2\pi ik\cdot \rho_1} - 1)(e^{2\pi ik\cdot \rho_2} - 1)\cdots(e^{2\pi ik\cdot \rho_t} - 1)\big)\hat{f}\right|\, dk \lesssim~ \int_{B_R(0)} |k|^{t+s-j}\, dk.
   \end{align*}
   Hence, $\widehat{D_{\bm{\rho}} f}(k) \in L^1(\mathcal{B})$ provided $t+s-j + d-1 > -1$.  This last statement is true for $0 \leq j \leq t+s + d -1$, and
   we obtain the desired result.
\end{proof}

\subsection{The multilattice Cauchy--Born model}

The next ingredient for our analysis is the Cauchy--Born energy functional.  We
will later compare the Hessian of a linearized atomistic model with that of the
Cauchy--Born Hessian in order to glean information about the atomistic Green's
matrix from the Cauchy--Born Green's matrix.   The Cauchy--Born energy functional
was originally proposed by Cauchy for Bravais lattices~\cite{cauchy} and was
later extended to multilattices~\cite{born1954}.  The fundamental idea behind
the original Cauchy rule for Bravais lattices was that the atomistic and
continuum kinematics could be related by assuming that a continuum strain
affected the atomistic model by straining the lattice basis vectors as if they
were part of the continuous medium~\cite{cauchy}.  The adaptation of this to
multilattices proceeded by further assuming that the relative shifts between
atoms inside each unit cell were equilibrated~\cite{born1954}.

For our purposes, we will introduce both the classical Cauchy--Born energy for
multilattices, and a variant used in~\cite{koten2013,olsonUnpub}, which maintains
the relative shifts in each unit cell as degrees of freedom in the energy
functional.  Throughout this section, we will employ the displacement-shift
kinematic description of the multilattice.  That is, we define a base
displacement at each Bravais lattice site by $U(\xi) = u_0(\xi)$ and then
define the relative shifts within each unit cell by $p_\alpha(\xi) =
u_\alpha(\xi) - u_0(\xi)$ and $\bm{p} = (p_0, \ldots, p_{S-1})$.  In this
notation, the ``non-classical'' variant of the Cauchy--Born strain energy
density functional is defined for $\mG \in \mathbb{R}^{n \times d}$ and $\bm{p} \in \mathbb{R}^n$ by
\begin{align*}
\hat{W}(\mG, \bm{p}) :=~& \hat{V}\big((\mG \rho + p_\beta - p_\alpha)_{\triple \in \mathcal{R}}\big),
\end{align*}
and for $U \in {\rm C}^1(\mathbb{R}^d, \mathbb{R}^n)$ and $p_\alpha \in {\rm C}^0(\mathbb{R}^d, \mathbb{R}^n)$ by
\begin{align*}
W( (U,\bm{p})) :=~& V\big((\nabla_\rho U + p_\beta - p\alpha )_{\triple \in \mathcal{R}}\big).
\end{align*}
The Cauchy--Born continuum energy is then, formally, defined by
\[
\mathcal{E}^\c(U,\bm{p}) =~ \int_{\mathbb{R}^d} W( (U,\bm{p}))\, dx.
\]

The classical variant of the Cauchy--Born rule~\cite{born1954} additionally
enforces that the shifts in each unit cell are equilibrated in the sense that
the energy in each unit cell is minimized.  Thus, it defines a strain energy
density functional on $\mathbb{R}^{n \times d}$ by
\begin{equation}  \label{eq:classical-cb}
   \bar{W}(\mG) := \min_{\bm{p} \in (\R^d)^S} \hat{V}\big((\mG \rho + p_\beta - p\alpha )_{\triple \in \mathcal{R}}\big).
\end{equation}

A useful relation between the classical Cauchy--Born rule and the atomistic
model is that minimizing $\hat{V}$ with respect to the shifts in each
unit cell is equivalent to the equilibrium condition that we used in
Theorem~\ref{well_defined} to show that $\mathcal{E}^\a$ is well-defined.

\begin{lemma} \label{th:equil_shift_cor}
   Recall the multilattice is defined by $\mathcal{M} := \bigcup_{\alpha = 0}^{S-1} \left(\mF \mathbb{Z}^d + p_\alpha \right)$, and let $\bm{y}$ be the reference deformation defined by
    $y_\alpha(\xi) = \xi + p_\alpha$. If $d = n$, then set $\mG = I_{d\times d} \in \mathbb{R}^{d \times d}$, and if $d \neq n$, set $\mG = \begin{pmatrix} I_{d\times d} \\ \bm{0} \end{pmatrix}$ and consider each $p_\alpha \in \mathbb{R}^d$ to be in $\mathbb{R}^n$ via $p_\alpha = \begin{pmatrix} p_\alpha \\ 0\end{pmatrix}$.  Then the following two
    conditions are equivalent:
   \begin{align*}
      \partial_{\bm{p}} \hat{W}(\mG, \bm{p})
      =
      \partial_{\bm{p}} \hat{V}\big((\mG \rho + p_\beta - p_\alpha )_{\triple \in \mathcal{R}}\big) &= 0, \qquad \text{and} \\
   \sum_{\xi \in \mathcal{L}} \sum_{\triple \in \mathcal{R}} \hat{V}_{,\triple}(D\bm{y}(\xi)) \cdot D_{\triple} \bm{v}(\xi) &= 0,
   \qquad \forall \, \bm{v} \in \bm{\mathcal{U}}_0.
   \end{align*}
\end{lemma}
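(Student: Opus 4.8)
The plan is to exploit that the homogeneous reference $\bm{y}$ has a site-independent finite-difference stencil. Indeed, for $y_\alpha(\xi) = \xi + p_\alpha$ a direct computation gives
\[
   D_{\triple}\bm{y}(\xi) = y_\beta(\xi+\rho) - y_\alpha(\xi) = \rho + p_\beta - p_\alpha = \mG\rho + p_\beta - p_\alpha,
\]
using $\mG\rho = \rho$ when $d = n$ and the analogous embedded identity when $d \neq n$. Thus $D\bm{y}(\xi)$ is independent of $\xi$, and in particular the coefficients $\hat{V}_{,\triple}(D\bm{y}(\xi)) = \hat{V}_{,\triple}(D\bm{y})$ appearing in the second condition are constant in $\xi$. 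Differentiating $\bm{p} \mapsto \hat{V}\big((\mG\rho + p_\beta - p_\alpha)_{\triple\in\mathcal{R}}\big)$ with the chain rule, the first condition reads, for each species $\sigma$,
\[
   \partial_{p_\sigma}\hat{V}(D\bm{y}) = \sum_{\triple\in\mathcal{R}} \hat{V}_{,\triple}(D\bm{y})\,(\delta_{\sigma\beta} - \delta_{\sigma\alpha}) = 0 .
\]

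First I would rewrite the left-hand side of the second condition by reindexing. Since $Dv_0$ is compactly supported for $\bm{v}\in\mathcal{U}_0$ and the interaction range spans $\mathbb{R}^d$, each class in $\bm{\mathcal{U}}_0 = \mathcal{U}_0/\mathbb{R}^n$ has a representative with every $v_\alpha$ compactly supported; moreover the sum in the second condition involves only the differences $D_{\triple}\bm{v}$, so it is independent of the representative and I may compute with this summable one. Writing $D_{\triple}\bm{v}(\xi) = v_\beta(\xi+\rho) - v_\alpha(\xi)$, reindexing the $v_\beta$-part by $\eta = \xi + \rho$, and collecting terms by species, the (now finite) double sum collapses to
\[
   \sum_{\xi\in\mathcal{L}}\sum_{\triple\in\mathcal{R}} \hat{V}_{,\triple}(D\bm{y})\cdot D_{\triple}\bm{v}(\xi)
   = \sum_{\sigma=0}^{S-1} \partial_{p_\sigma}\hat{V}(D\bm{y}) \cdot \Big( \sum_{\xi\in\mathcal{L}} v_\sigma(\xi) \Big),
\]
where the coefficient of $\sum_\xi v_\sigma(\xi)$ is exactly the vector $\partial_{p_\sigma}\hat{V}(D\bm{y})$ computed in the first paragraph.

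Given this identity the equivalence follows immediately. If the first condition holds, every coefficient $\partial_{p_\sigma}\hat{V}(D\bm{y})$ vanishes and hence so does the right-hand side for all $\bm{v}$, giving the second condition. Conversely, assuming the second condition, I would test against the single-site displacement that takes the value $w \in \mathbb{R}^n$ for species $\sigma$ at one fixed site $\xi_0$ and is zero otherwise (this lies in $\mathcal{U}_0$); the identity then reduces to $\partial_{p_\sigma}\hat{V}(D\bm{y})\cdot w = 0$, and letting $w$ range over $\mathbb{R}^n$ and $\sigma$ over all species recovers the first condition.

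The only delicate point — which I would treat as the main (and still minor) obstacle — is the function-space bookkeeping in the reindexing step: one must pass to the compactly supported representative so that the individual reindexed sums converge absolutely, and verify that the per-species totals $\sum_\xi v_\sigma(\xi)$ can be prescribed freely and independently in $\mathbb{R}^n$ for each $\sigma$, notwithstanding the quotient by the diagonal translation $\mathbb{R}^n$. I would also note in passing that translation invariance forces $\sum_\sigma \partial_{p_\sigma}\hat{V}(D\bm{y}) = 0$ identically, so the $S$ equations in the first condition carry one redundancy; this is harmless and automatically consistent with both conditions.
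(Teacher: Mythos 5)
Your proof is correct and follows essentially the same route as the paper: the paper likewise exploits that $D\bm{y}(\xi) = \mG\rho + p_\beta - p_\alpha$ is site-independent, tests with the single-site, single-species displacement, and identifies the resulting sum with $\partial_{p_\gamma}\hat{W}(\mG,\bm{p})$ by the same reindexing computation (carried out in its Appendix~\ref{sec:cb_at}). Your only refinement is to first establish the identity $\sum_{\xi}\sum_{\triple}\hat{V}_{,\triple}(D\bm{y})\cdot D_{\triple}\bm{v}(\xi)=\sum_{\sigma}\partial_{p_\sigma}\hat{V}(D\bm{y})\cdot\big(\sum_{\xi}v_\sigma(\xi)\big)$ for an arbitrary compactly supported representative, which makes the extension-by-linearity direction (first condition implies second) explicit where the paper leaves it implicit.
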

\begin{proof}
   We define the test function $\bm{v}$ by $v_\gamma(\zeta) = 1$,
   $v_\gamma(\xi) = 0$ for $\xi \neq \zeta$, and $v_\beta(\xi) = 0$
   for all $\beta \neq \gamma$. Then a straightforward computation (see Appendix~\ref{sec:cb_at}) yields
   \begin{equation} \label{eq:prf-equil_shift_cor}
      \< \mathcal{E}^\a_{\rm hom}(\bm{0}), \bm{v} \>
         = \partial_{p_\gamma} \hat{W}(\mG, \bm{p}),
   \end{equation}
   which implies that the result.
\end{proof}

Another relation between the Cauchy--Born rule and the atomistic model is the
fact that the stability assumption, Assumption~\ref{coercive}, implies an
analogous stability condition for the Cauchy--Born energy functional.

For the purpose of proving this auxiliary result, we temporarily consider a
finite continuum domain $\Omega = (-1/2, 1/2]^d$, a corresponding finite
atomistic domain
\[
   \Omega_\epsilon :=
   \left\{-1/2 + \epsilon, -1/2+2\epsilon, \cdots, 1/2-\epsilon, 1/2\right\}^d,
\]
associated atomistic and continuum energies, and appropriate norms defined by
\begin{align*}
\mathcal{E}^\a_\epsilon(\bm{u}^\epsilon) :=~& \epsilon^{d} \sum_{\xi \in \Omega_\epsilon} V_\xi(D^\epsilon \bm{u}^\epsilon) \quad \mbox{where} \quad D^\epsilon_{\triple}\bm{u}^\epsilon(\xi) := \frac{u_\beta^\epsilon(\xi + \epsilon\rho) - u_\alpha^\epsilon(\xi)}{\epsilon} \\
\|\bm{u}^\epsilon\|_{\a,\epsilon}^2 =~& \|\nabla I_\epsilon u^\epsilon_0\|^2_{L^2(\Omega)} + \sum_{\alpha=0}^{S-1} \epsilon^{-2}\| I_\epsilon u_\alpha^\epsilon - I_\epsilon u^\epsilon_0\|^2_{L^2(\Omega)}, \qquad \text{and} \\
\mathcal{E}^\c_{\Omega}(U, \bm{p}) =~& \int_{\Omega} V\Big((\nabla_\rho U(x) + p_\beta(x) - p_\alpha(x))_{\triple \in \mathcal{R}}\Big)\, dx, \\
   \| (U, \bm{p} )\|_{\c,\Omega}^2 &:= \| \D U \|_{L^2(\Omega)}^2
               + \sum_{\alpha=0}^{S-1} \| p_\alpha - p_0 \|_{L^2(\Omega)}^2, \\
								\| (U, \bm{p} )\|_{\c,\mathbb{R}^d}^2 &:= \| \D U \|_{L^2(\mathbb{R}^d)}^2
               + \sum_{\alpha=0}^{S-1} \| p_\alpha - p_0 \|_{L^2(\mathbb{R}^d)}^2.
\end{align*}
We will evaluate $\mathcal{E}^\c_{\Omega}$ only for $(U, \bm{p}) \in C^1_{\rm
per}(\Omega) \times ( C_{\rm per}(\Omega) )^{S-1}$, where ${\rm per}$ denotes
periodic functions. For such fields $(U, \bm{p})$ we define the corresponding
atomistic fields
\begin{equation} \label{eq:c2at interpolation}
   u^\epsilon_\alpha(\xi) = U(\xi) +\epsilon p_\alpha(\xi) \quad
   \text{and} \quad
   \bm{u}^\epsilon = (u^\epsilon_\alpha)_{\alpha = 0}^{S-1}.
\end{equation}

The next result is a scaled variant of~\cite[Proposition 3.1]{koten2013},
proven by a straightforward Taylor expansion.

\begin{lemma}\label{cbEnergyEstimate}
   Let $U \in C^{3}(\Omega), p_\alpha \in C^{2}(\Omega)$, and
   $\bm{u}^\epsilon$ given by \eqref{eq:c2at interpolation}. Then there exists
   a constant $C$, independent of $\epsilon$, such that
   \begin{align*}
   \left|\mathcal{E}^\a_\epsilon(\bm{u}^\epsilon)
   - \mathcal{E}^\c_{\Omega}((U,\bm{p}))\right| \leq C \epsilon.
   \end{align*}
\end{lemma}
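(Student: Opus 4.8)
The plan is a one-term Taylor expansion combined with a quadrature estimate. Since this auxiliary periodic problem involves only the homogeneous reference, we take $V_\xi \equiv V$ here, so both energies are built from the same potential. I would substitute the interpolation \eqref{eq:c2at interpolation} into the scaled finite differences, Taylor expand first the differences and then $V$ about the continuum argument, and recognize the leading-order sum as a Riemann quadrature for $\mathcal{E}^\c_\Omega$. The hypotheses $U \in C^3(\Omega)$ and $p_\alpha \in C^2(\Omega)$ are precisely what is needed to bound the Taylor remainders uniformly in $\epsilon$, while the finiteness of $\mathcal{R}$ keeps all $\rho$-dependent constants uniform.

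First I would expand the finite differences. Writing $u^\epsilon_\alpha(\xi) = U(\xi) + \epsilon p_\alpha(\xi)$ and Taylor expanding $U(\xi + \epsilon\rho)$ to third order and $p_\beta(\xi+\epsilon\rho)$ to second order,
\[
   D^\epsilon_\triple \bm{u}^\epsilon(\xi)
   = \big(\nabla_\rho U(\xi) + p_\beta(\xi) - p_\alpha(\xi)\big)
     + \epsilon\, r_\triple(\xi) + \mathcal{O}(\epsilon^2),
\]
where $r_\triple(\xi) = \tfrac{1}{2} \nabla^2 U(\xi)[\rho,\rho] + \nabla p_\beta(\xi)\cdot\rho$ and the $\mathcal{O}(\epsilon^2)$ term is controlled uniformly by $\|\nabla^3 U\|_{L^\infty}$, $\|\nabla^2 p_\beta\|_{L^\infty}$, and $\max_{\triple\in\mathcal{R}}|\rho|$. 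Abbreviating by $\bm{g}(\xi) := (\nabla_\rho U(\xi) + p_\beta(\xi) - p_\alpha(\xi))_{\triple\in\mathcal{R}}$ the continuum argument, this reads $D^\epsilon\bm{u}^\epsilon(\xi) = \bm{g}(\xi) + \epsilon\,\bm{r}(\xi) + \mathcal{O}(\epsilon^2)$.

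Next I would expand $V$, which is admissible since $V\in C^2$ with bounded derivatives by V.2:
\[
   V(D^\epsilon\bm{u}^\epsilon(\xi))
   = V(\bm{g}(\xi)) + \epsilon \sum_{\triple\in\mathcal{R}} V_{,\triple}(\bm{g}(\xi))\cdot r_\triple(\xi) + \mathcal{O}(\epsilon^2).
\]
Multiplying by $\epsilon^d$, summing over $\Omega_\epsilon$, and using $\epsilon^d \#\Omega_\epsilon = |\Omega| = 1$, the first-order term equals $\epsilon$ times the bounded Riemann sum $\epsilon^d\sum_\xi \sum_\triple V_{,\triple}(\bm{g})\cdot r_\triple$ and is therefore $\mathcal{O}(\epsilon)$, while the remainder is $\epsilon^d \sum_\xi \mathcal{O}(\epsilon^2) = \mathcal{O}(\epsilon^2)$; both bounds use only the uniform bounds on the derivatives of $V$ together with the boundedness of $\bm{g}$ and $\bm{r}$ guaranteed by the regularity of $(U,\bm{p})$. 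What remains is the leading term $\epsilon^d \sum_{\xi\in\Omega_\epsilon} V(\bm{g}(\xi))$.

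Finally I would identify this leading term as a Riemann sum for $\mathcal{E}^\c_\Omega(U,\bm{p}) = \int_\Omega V(\bm{g}(x))\,dx$. Because $V\in C^4$ and $\bm{g}\in C^1(\Omega)$ (as $U\in C^2$ and $p_\alpha \in C^1$), the integrand $x\mapsto V(\bm{g}(x))$ is periodic and $C^1$, so the standard quadrature estimate over the $\epsilon$-cells tiling $\Omega$ gives
\[
   \Big| \epsilon^d \!\!\sum_{\xi\in\Omega_\epsilon}\! V(\bm{g}(\xi)) - \int_\Omega V(\bm{g}(x))\,dx \Big|
   \lesssim \epsilon\, \|\nabla (V\circ\bm{g})\|_{L^\infty(\Omega)} = \mathcal{O}(\epsilon).
\]
Collecting the three contributions yields $|\mathcal{E}^\a_\epsilon(\bm{u}^\epsilon) - \mathcal{E}^\c_\Omega(U,\bm{p})| \lesssim \epsilon$. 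The only genuinely substantive step is this last quadrature estimate, since it is where the discrete-to-continuum comparison actually takes place; the preceding Taylor bookkeeping is routine once one observes that $\mathcal{R}$ is finite, so that every $\rho$-dependent constant is uniform in $\xi$ and $\epsilon$.
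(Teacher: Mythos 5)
Your proposal is correct and takes essentially the same approach as the paper: the paper gives no detailed argument, stating only that the lemma is ``a scaled variant of~\cite[Proposition 3.1]{koten2013}, proven by a straightforward Taylor expansion,'' and your expand-the-differences, expand-$V$, then Riemann-sum-quadrature argument is exactly that computation. Your explicit remarks --- taking $V_\xi \equiv V$ for this homogeneous auxiliary problem and bounding the $\mathcal{O}(\epsilon)$ quadrature error using periodicity and $\bm{g} \in C^1$ --- simply fill in details the paper leaves implicit.
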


Arguing as in \cite[Lemma 3.2]{hudson2013}, Lemma~\ref{cbEnergyEstimate} implies
convergence of hessians.  The proof requires only minor adjustments.


\begin{lemma}\label{hessianConvergence}
Let $Z \in {\rm C}^3(\Omega), q_\alpha \in {\rm C}^2(\Omega)$ and
$\bm{z}^\epsilon$ define analogously to \eqref{eq:c2at interpolation}, then
\[
\<\delta^2 \mathcal{E}_\epsilon^\a(0)\bm{z}^\epsilon, \bm{z}^\epsilon\> - \<\delta^2 \mathcal{E}^\c_{\Omega}(0)(Z, \bm{q}), (Z, \bm{q})\> \to 0, \quad \mbox{as $\epsilon \to 0$.}
\]
\end{lemma}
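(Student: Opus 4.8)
The plan is to deduce the convergence of the Hessians from the energy estimate of Lemma~\ref{cbEnergyEstimate} by the standard device of reducing the quadratic form to a scalar second derivative and then approximating that derivative by a central finite difference, exactly as in \cite[Lemma 3.2]{hudson2013}. The observation that makes the scaling work is that the interpolation \eqref{eq:c2at interpolation} is linear in the continuum fields: replacing $(Z,\bm{q})$ by $(tZ, t\bm{q})$ for a scalar $t$ produces, through \eqref{eq:c2at interpolation}, exactly the atomistic field $t\bm{z}^\epsilon$, since $tZ(\xi) + \epsilon\, t q_\alpha(\xi) = t z_\alpha^\epsilon(\xi)$. Consequently, setting
\[
   g_\epsilon(t) := \mathcal{E}^\a_\epsilon(t\bm{z}^\epsilon)
   \quad\text{and}\quad
   g(t) := \mathcal{E}^\c_\Omega(tZ, t\bm{q}),
\]
the chain rule gives $g_\epsilon''(0) = \<\delta^2\mathcal{E}^\a_\epsilon(0)\bm{z}^\epsilon,\bm{z}^\epsilon\>$ and $g''(0) = \<\delta^2\mathcal{E}^\c_\Omega(0)(Z,\bm{q}),(Z,\bm{q})\>$, so the assertion of the lemma is precisely $g_\epsilon''(0) - g''(0) \to 0$.

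First I would apply Lemma~\ref{cbEnergyEstimate} to the rescaled fields $(tZ, t\bm{q})$, which are again of class $C^3 \times C^2$, to obtain $|g_\epsilon(t) - g(t)| \leq C\epsilon$ with a constant that is uniform for $t$ in the bounded interval $[-1,1]$ (the constant of Lemma~\ref{cbEnergyEstimate} depends polynomially on the $C^3$- and $C^2$-norms of the fields, hence continuously and boundedly on $t$). Next I would record uniform-in-$\epsilon$ bounds $|g_\epsilon''''(t)| \leq C$ and $|g''''(t)| \leq C$ for $t \in [-1,1]$. For $g_\epsilon''''$ this follows from assumption V.2 (bounded fourth derivatives of $V$) together with two facts: that
\[
   D^\epsilon_{\triple}\bm{z}^\epsilon(\xi) = \nabla_\rho Z(\xi) + q_\beta(\xi) - q_\alpha(\xi) + \mathcal{O}(\epsilon)
\]
is bounded uniformly in $\epsilon$ on $\Omega$, and that the prefactor $\epsilon^d$ times the number of summands (which grows like $\epsilon^{-d}$) stays bounded by the fixed volume $|\Omega|$, so the $\epsilon^{-d}$ growth is exactly balanced; the bound on $g''''$ is immediate from the same smoothness of $V$. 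This is, incidentally, the point at which the requirement of \emph{four} derivatives in V.2 is used.

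With these ingredients, the final step is the finite-difference estimate. Taylor's theorem with the fourth-derivative bounds yields
\[
   \Bigl| g_\epsilon''(0) - \tfrac{g_\epsilon(h) - 2g_\epsilon(0) + g_\epsilon(-h)}{h^2} \Bigr| \leq C h^2
\]
and the identical estimate for $g$, while the energy estimate controls the difference of the two central differences,
\[
   \Bigl| \tfrac{g_\epsilon(h) - 2g_\epsilon(0) + g_\epsilon(-h)}{h^2} - \tfrac{g(h) - 2g(0) + g(-h)}{h^2} \Bigr| \leq \frac{4 C \epsilon}{h^2}.
\]
Combining the three bounds by the triangle inequality gives $|g_\epsilon''(0) - g''(0)| \leq 2Ch^2 + 4C\epsilon h^{-2}$, and choosing $h = \epsilon^{1/4}$ produces $|g_\epsilon''(0) - g''(0)| \lesssim \epsilon^{1/2} \to 0$, which proves the lemma (indeed with an explicit rate).

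I expect the only genuine obstacle to be the uniform-in-$\epsilon$ fourth-derivative bound in the middle step: one must verify that differentiating the site-potential sum four times and invoking the boundedness of $D^\epsilon\bm{z}^\epsilon$ keeps the constant independent of $\epsilon$, which is exactly where the scaling balance $\epsilon^d \times (\text{number of sites}) = |\Omega|$ is essential. Everything else is a black-box application of Lemma~\ref{cbEnergyEstimate} and elementary calculus, which is what is meant by the remark that the proof requires only minor adjustments relative to \cite{hudson2013}.
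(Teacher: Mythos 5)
Your proposal is correct and coincides with the paper's intended argument: the paper's entire proof of this lemma is a deferral to \cite[Lemma 3.2]{hudson2013}, whose mechanism is exactly your reduction to the scalar functions $g_\epsilon(t) = \mathcal{E}^\a_\epsilon(t\bm{z}^\epsilon)$ and $g(t) = \mathcal{E}^\c_\Omega(tZ,t\bm{q})$, combined with the uniform energy estimate of Lemma~\ref{cbEnergyEstimate}, fourth-derivative bounds furnished by V.2, and a central-difference approximation of the second derivative optimized at $h = \epsilon^{1/4}$. The details you supply --- linearity of \eqref{eq:c2at interpolation} under scaling by $t$, the balance of the prefactor $\epsilon^d$ against the $\mathcal{O}(\epsilon^{-d})$ number of lattice sites, and the monotone dependence of the constant in Lemma~\ref{cbEnergyEstimate} on the $C^3 \times C^2$ norms of the fields --- are precisely the ``minor adjustments'' the paper alludes to.
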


We have now assembled the necessary prerequisites to prove that atomistic stability,
 Assumption~\ref{coercive}, implies stability of the Cauchy--Born model.


\begin{theorem}[Cauchy--Born Stability]\label{CBstability}
   Suppose $Z \in H^1_{\rm loc}(\mathbb{R}^d,\mathbb{R}^n), q_\alpha \in L^2(\mathbb{R}^d,\mathbb{R}^n)$
   with $\nabla Z, q_\alpha$ having compact support. Then there exists
   $\gamma_\c > 0$ such that
   \begin{equation} \label{eq:CBstability}
   \big\< \delta^2 \mathcal{E}^\c(0)(Z,\bm{q}),(Z,\bm{q}) \big\>
   \geq \gamma_\c \big\| (Z, \bm{q}) \big\|_{\c, \R^d}^2.
   \end{equation}
\end{theorem}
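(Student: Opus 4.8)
The plan is to reduce \eqref{eq:CBstability} to a statement on a single periodic cell, where the finite-domain comparison results of this subsection apply, and then to transfer the atomistic phonon stability \eqref{eq:stab-hom} to the continuum through the limit $\epsilon \to 0$. First I would carry out three reductions. Since each $\hat V_\xi$ has uniformly bounded second derivatives (V.2), the left-hand side of \eqref{eq:CBstability} is a bounded quadratic form in $(Z,\bm{q})$ with respect to $\|\cdot\|_{\c,\R^d}$ while the right-hand side is manifestly continuous, so it suffices to prove \eqref{eq:CBstability} for smooth fields $Z \in C^3$, $q_\alpha \in C^2$ with $\nabla Z, q_\alpha$ compactly supported, which are dense in the admissible class. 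Moreover both sides depend only on $\nabla Z$ and the differences $q_\alpha - q_0$, so I may normalize $q_0 \equiv 0$. Next, because the inequality is a constant-coefficient quadratic form and both sides scale identically under $x \mapsto \lambda x$ together with $(\nabla Z, \bm{q}) \mapsto \lambda(\nabla Z, \bm{q})$, I may rescale so that the support of $(\nabla Z, \bm{q})$ lies in the interior of $\Omega = (-1/2, 1/2]^d$. Finally, subtracting the (irrelevant) constant value of $Z$ near $\partial\Omega$ and extending periodically, I obtain $Z \in C^3_{\rm per}(\Omega)$, $q_\alpha \in C^2_{\rm per}(\Omega)$ with $\langle \delta^2 \mathcal{E}^\c(0)(Z,\bm{q}),(Z,\bm{q})\rangle = \langle \delta^2 \mathcal{E}^\c_\Omega(0)(Z,\bm{q}),(Z,\bm{q})\rangle$ and $\|(Z,\bm{q})\|_{\c,\R^d} = \|(Z,\bm{q})\|_{\c,\Omega}$.

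On the periodic cell I would assemble three ingredients and pass to the limit $\epsilon \to 0$. Letting $\bm{z}^\epsilon$ be the atomistic interpolant of $(Z,\bm{q})$ defined by \eqref{eq:c2at interpolation}, Lemma~\ref{hessianConvergence} gives $\langle \delta^2 \mathcal{E}^\a_\epsilon(0)\bm{z}^\epsilon, \bm{z}^\epsilon\rangle \to \langle \delta^2 \mathcal{E}^\c_\Omega(0)(Z,\bm{q}),(Z,\bm{q})\rangle$. A direct computation from \eqref{eq:c2at interpolation}, using $q_0 \equiv 0$ so that $\nabla I_\epsilon z^\epsilon_0 \to \nabla Z$ and $\epsilon^{-1}(I_\epsilon z^\epsilon_\alpha - I_\epsilon z^\epsilon_0) = I_\epsilon(q_\alpha - q_0) \to q_\alpha - q_0$ in $L^2(\Omega)$, yields the norm convergence $\|\bm{z}^\epsilon\|_{\a,\epsilon}^2 \to \|(Z,\bm{q})\|_{\c,\Omega}^2$; note how the factor $\epsilon^{-2}$ in $\|\cdot\|_{\a,\epsilon}$ is exactly compensated by the $\epsilon q_\alpha$ scaling of the shifts. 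The remaining ingredient is the scaled, periodic atomistic stability estimate $\langle \delta^2 \mathcal{E}^\a_\epsilon(0)\bm{z}^\epsilon, \bm{z}^\epsilon\rangle \geq \gamma_\a \|\bm{z}^\epsilon\|_{\a,\epsilon}^2$. Granting this, the three facts combine to give $\langle \delta^2 \mathcal{E}^\c_\Omega(0)(Z,\bm{q}),(Z,\bm{q})\rangle = \lim_\epsilon \langle \delta^2 \mathcal{E}^\a_\epsilon(0)\bm{z}^\epsilon, \bm{z}^\epsilon\rangle \geq \lim_\epsilon \gamma_\a \|\bm{z}^\epsilon\|_{\a,\epsilon}^2 = \gamma_\a \|(Z,\bm{q})\|_{\c,\Omega}^2$, so \eqref{eq:CBstability} holds with $\gamma_\c = \gamma_\a$.

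The main obstacle is precisely the scaled periodic stability estimate, i.e.\ transferring the phonon stability \eqref{eq:stab-hom}, which Assumption~\ref{coercive} supplies only for compactly supported $\bm{v} \in \bm{\mathcal{U}}_0$, to periodic fields on $\Omega_\epsilon$. I would handle this with the semidiscrete (Bloch) Fourier transform on the Brillouin zone $\mathcal{B}$. Since both $\delta^2\mathcal{E}^\a_{\rm hom}(0)$ and the quadratic form $\|\cdot\|_{\a_1}^2$ are translation invariant, they are simultaneously diagonalized by the transform, and \eqref{eq:stab-hom} is equivalent to the uniform spectral bound $\hat{H}(k) \geq \gamma_\a \hat{M}(k)$ for all $k \in \mathcal{B}$, where $\hat{H}(k)$ is the dynamical matrix and $\hat{M}(k)$ the symbol of $\|\cdot\|_{\a_1}^2$. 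After undoing the $\epsilon$-scaling, $\mathcal{E}^\a_\epsilon$ on the periodic cell is exactly $\mathcal{E}^\a_{\rm hom}$ on a finite periodic supercell of $\mathcal{L}$; its second variation block-diagonalizes over the finite set of reciprocal grid points $k_j \in \mathcal{B}$ compatible with the supercell, and on each block the inequality $\hat{H}(k_j) \geq \gamma_\a \hat{M}(k_j)$ holds simply because it holds for every $k \in \mathcal{B}$. Summing the blocks over $j$ gives the periodic estimate with the same constant $\gamma_\a$. The one technical point requiring care is matching the discrete symbol $\hat{M}$ of $\|\cdot\|_{\a_1}$ with the scaled periodic norm $\|\cdot\|_{\a,\epsilon}$ under the interpolation \eqref{eq:c2at interpolation}; this is exactly the role of the norm-convergence computation above, which guarantees that no stability is lost in the limit.
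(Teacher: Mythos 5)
Your proposal is correct and shares the paper's overall skeleton---rescale so that the support lies in the unit cell, reduce to smooth fields by density, interpret them as periodic, interpolate atomistically via \eqref{eq:c2at interpolation}, invoke Lemma~\ref{hessianConvergence} together with the norm convergence \eqref{eq:cbstab-prf:norm-convergence}, and pass to the limit $\epsilon \to 0$---but you resolve what you rightly call the main obstacle (transferring \eqref{eq:stab-hom} to the scaled periodic functional) by a genuinely different device. The paper sidesteps periodic stability entirely: since ${\rm supp}(\nabla Z, \bm{q})$ lies inside $B_{1/2} \subset \Omega$, the unscaled field $\bm{z}_N(\xi) := N\bm{z}^\epsilon(\xi/N)$, extended by zero, is a legitimate compactly supported test function in $\bm{\mathcal{U}}_0$; Assumption~\ref{coercive} then applies to it verbatim, and an exact scaling identity shows that the Rayleigh quotient of $\big(\delta^2\mathcal{E}^\a_\epsilon(0), \|\cdot\|_{\a,\epsilon}\big)$ at $\bm{z}^\epsilon$ equals that of $\big(\delta^2\mathcal{E}^\a(0), \|\cdot\|_{\a_2}\big)$ at $\bm{z}_N$. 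Your route instead proves genuine supercell stability by Bloch theory: \eqref{eq:stab-hom} is equivalent to a pointwise bound $H(k) \geq \gamma_\a M_1(k)$ between the dynamical matrix and the symbol $M_1(k)$ of $\|\cdot\|_{\a_1}^2$ (pointwise spectral information of exactly this kind is derived in the paper, but only later, in \eqref{spectra}), and the supercell-periodic Hessian block-diagonalizes over the reciprocal grid points, so summing the blocks yields periodic stability with the same constant. Your argument is heavier---it imports the dynamical-matrix formalism that the paper only sets up in the proof of the main theorem---but it buys a stronger intermediate fact: stability for \emph{all} supercell-periodic fields, not merely those whose strain is supported inside one cell, which the paper's unscaling trick cannot give. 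Two points need tightening. First, at the mode $k_j = 0$ both symbols degenerate by translation invariance, so the block inequality there must be obtained as the limit $k \to 0$ using continuity of $H$ and $M_1$; this is harmless but should be said. Second, the Bloch argument delivers the lower bound in the discrete $\a_1$-type seminorm, and relating that to $\|\cdot\|_{\a,\epsilon}$ costs a norm-equivalence constant (uniform in $\epsilon$, by scaling); consequently you obtain some $\gamma_\c \gtrsim \gamma_\a$ rather than $\gamma_\c = \gamma_\a$ exactly, just as the paper's proof only produces a constant $\gamma_\a' > 0$ rather than $\gamma_\a$ itself.
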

\begin{proof}
   This proof largely follows the Bravais lattice case \cite{hudson2012};
   the main additional step is the correct choice of rescaling the shifts.

   Suppose $(Z, \bm{q})$ is supported in $B_{R/2}$.
   Rescaling $Z_R(x) := R^{-1} Z(Rx)$ and $q_{R,\alpha}(x) := q_\alpha(Rx)$, we obtain
   that $(Z_R, \bm{q}_R)$ has support contained in $B_{1/2}(0)$, while
   \begin{align*}
      \big\< \delta^2 \mathcal{E}^\c(0)(Z,\bm{q}),(Z,\bm{q}) \big\>
      &= R^d \big\< \delta^2 \mathcal{E}^\c(0)(Z_R,\bm{q}_R),(Z_R,\bm{q}_R) \big\>,
      \quad \text{and} \quad \\
      \| (Z,\bm{q}) \|_{\c, \R^d}^2 &= R^d \| (Z_R,\bm{q}_R) \|_{\c, \Omega}^2.
   \end{align*}
 In particular,
   stability
   for $(Z, \bm{q})$ implies stability for $(Z_R, \bm{q}_R)$ and vice-versa,
   that is, we drop the subscript $R$ and assume, without loss of generality,
   that $(Z, \bm{q})$ has support in $B_{1/2}$. Moreover, by density of smooth
   functions we may also assume that $(Z, \bm{q}) \in {\rm C}^3 \times ({\rm C}^2)^{S-1}$.

   We can now interpret $(Z,\bm{q})$ as periodic with respect to the domain $\Omega$ and,
   for $N \in \N, \epsilon := 1/N$, let $\bm{z}^\epsilon$ be the
   corresponding periodic atomistic test function defined via~\eqref{eq:c2at interpolation}. Then,
   Lemma~\ref{hessianConvergence} implies
   \begin{equation}\label{leopard1}
      \big|\<\delta^2 \mathcal{E}_\epsilon^\a(0) \bm{z}^{\epsilon},  \bm{z}^{\epsilon}\>
      - \<\delta^2 \mathcal{E}^\c_\Omega(0)(Z, \bm{q}), (Z, \bm{q})\>  \big| \to 0 \quad \mbox{as $N \to \infty$.}
   \end{equation}
   From standard finite element interpolation error estimates we can deduce that
   \begin{equation} \label{eq:cbstab-prf:norm-convergence}
      \| \bm{z}^\epsilon \|_{\a,\epsilon} \to \| (Z, \bm{q}) \|_{\c, \Omega}
      \quad \text{as } N \to \infty \quad (\epsilon \to 0).
   \end{equation}

   We now rescale $\bm{z}_N(\xi) := N\bm{z}^\epsilon(\xi/N)$ if $\xi/N \in \Omega$
   and $\bm{z}_N(\xi) = 0$ otherwise.
    Assumption~\ref{coercive} and norm equivalence, Lemma~\ref{norm_equiv}, then
   imply
   \begin{align*}
      0 < \gamma_\a'
      &\leq
         \frac{\< \delta^2 \mathcal{E}^\a(0)\bm{z}_N, \bm{z}_N \>}{
               \| \bm{z}_N \|_{\a_2}^2  }
      =
         \frac{ \< \delta^2 \mathcal{E}^\a_\epsilon(0) \bm{z}^\epsilon,
                  \bm{z}^\epsilon \> }{ \| \bm{z}^\epsilon \|_{\a, \epsilon}^2 } \\
      &\rightarrow
         \frac{\<\delta^2 \mathcal{E}^\c_\Omega(0)(Z, \bm{q}), (Z, \bm{q})\>}{
               \| (Z, \bm{q}) \|_{\c, \Omega}^2    } \qquad \text{as } N \to \infty,
   \end{align*}
   where we have used \eqref{leopard1} and \eqref{eq:cbstab-prf:norm-convergence}
   in the final line. Finally, for $(Z, \bm{q})$ supported in $\Omega$ we have
   \begin{equation*}
      \frac{\<\delta^2 \mathcal{E}^\c_\Omega(0)(Z, \bm{q}), (Z, \bm{q})\>}{
            \| (Z, \bm{q}) \|_{\c, \Omega}^2    }
      =
      \frac{\<\delta^2 \mathcal{E}^\c(0)(Z, \bm{q}), (Z, \bm{q})\>}{
            \| (Z, \bm{q}) \|_{\c, \mathbb{R}^d}^2    },
   \end{equation*}
   which completes the proof.
\end{proof}

\subsection{Lattice Green's function}

Having established the basic facts of the Fourier transform and Cauchy--Born model that we require, we now turn towards deriving the lattice Green's function to which we will apply these facts.  Applying the standard {\em continuous} Fourier transform on $\R^d$ to both sides of
\eqref{eq:CBstability} and applying the Plancherel theorem, we obtain
\begin{align*}
 \gamma_\c \Big( \int_{\mathbb{R}^d} 4\pi^2 |k|^2 |\hat{Z}(k)|^2 + \sum_{\alpha=0}^{S-1} |\hat{q}_\alpha(k)|^2\, dk \Big) \leq~& \big\<\delta^2 \mathcal{E}^\c(0) (Z, \bm{q}), (Z, \bm{q}) \big\> \\
=~& \int_{\mathbb{R}^d} \begin{pmatrix} \hat{Z}^* \\ \hat{\bm{q}}^* \end{pmatrix} \begin{pmatrix} J_{00}(k) &J_{0\bm{p}}(k) \\J_{0\bm{p}}^*(k) &J_{\bm{p}\bm{p}}(k) \end{pmatrix} \begin{pmatrix} \hat{Z} \\ \hat{\bm{q}}  \end{pmatrix} \, dk,
\end{align*}
where
\begin{align}
   \notag
   J_{00}(k) :=~& \sum_{\triple \in \mathcal{R}}\sum_{\tripleTau \in \mathcal{R}} 4\pi^2 (\tau \cdot k)
      V_{,\triple\tripleTau}(0) (\rho \cdot k) \\
   \notag
   [J_{0\bm{p}}(k)]_\beta :=~&  \
   \sum_{\rho \in \mathcal{R}_1}\sum_{\alpha = 0}^{S-1}\sum_{\tripleTau \in \mathcal{R}}
   -2\pi i (k \cdot \tau) \big[V_{,\triple\tripleTau}(0)
   - V_{,(\rho\beta\alpha)(\tau\gamma\delta)}(0)\big],  \\
   \notag
	 J_{\bm{p}0} =~& J_{0\bm{p}}^* \notag \\
   [J_{\bm{p}\bm{p}}(k)]_{\beta\delta} :=~& \sum_{\rho,\tau \in \mathcal{R}_1}\sum_{\alpha,\gamma = 0}^{S-1}
   \left[ V_{,\triple\tripleTau}(0) + V_{,(\rho\beta\alpha)(\tau\delta\gamma)}(0)
   - V_{,(\rho\beta\alpha)(\tau\gamma\delta)}(0) \right. \\
   \notag
   &\qquad\qquad \left. - V_{,(\rho\alpha\beta)(\tau\delta\gamma)}(0)\right], \\
   \notag
   J^{-1}(k) :=~& \begin{pmatrix} M^{-1} &-M^{-1}J_{0\bm{p}}J_{\bm{p}\bm{p}}^{-1} \\
   \notag
    -J_{\bm{p}\bm{p}}^{-1}J_{\bm{p}0}M^{-1} &
     J_{\bm{p}\bm{p}}^{-1}J_{\bm{p}0}M^{-1}J_{0\bm{p}}J_{\bm{p}\bm{p}}^{-1}
      + J_{\bm{p}\bm{p}}^{-1} \end{pmatrix}, \, \mbox{where}\\
    \label{eq:def-M-matrix}
   M :=~& J_{00} - J_{0\bm{p}} J_{\bm{p}\bm{p}}^{-1} J_{\bm{p}0}.
\end{align}

By taking the test pair with $\bm{q} = \bm{0}$, we see that this implies
\[
    \gamma_\c \Big( \int_{\mathbb{R}^d} 4 \pi^2 |k|^2 |\hat Z (k)|^2 \, dk \Big) \leq \int_{\mathbb{R}^d} \hat{Z}^* J_{00}(k) \hat{Z} \, dk,
\]
and in particular we obtain
\begin{equation*}
   J_{00}(k) \geq \gamma_\c 4\pi^2 |k|^2 I_{n \times n} \qquad
   \text{ for } k \in \mathbb{R}^d \setminus \{0\}.
\end{equation*}

In a similar fashion, by testing with pairs having $Z = 0$, we see that
\[
 \gamma_\c \Big( \int_{\mathbb{R}^d} \sum_{\alpha=0}^{S-1} |\hat{q}_\alpha(k)|^2 \, dk \Big) \leq \int_{\mathbb{R}^d} \hat{\bm{q}}^* J_{\bm{p}\bm{p}} \hat{\bm{q}} \, dk,
\]
where $J_{\bm{p}\bm{p}}$ is symmetric and independent of $k$, hence
\begin{equation*}
   J_{\bm{p}\bm{p}} \geq \gamma_\c I_{(S-1)n \times (S-1)n}.
\end{equation*}

Next, we note that $M = J_{00} - J_{0\bm{p}} J_{\bm{p}\bm{p}}^{-1} J_{\bm{p}0}$
is the Schur complement of $J_{\bm{p}\bm{p}}$ in $J$.  \cite[Theorem 5]{smith1992}
or~\cite[Corollary 2.3]{zhang2006schur} imply that the eigenvalues of $M$ interlace those
of $J$, so in particular we obtain that $M(k) \geq c |k|^2$ for some $c > 0$.
Letting ${\rm A}_{ijkl} :=
\frac{\partial \bar{W}(\mG)}{\partial \mG_{ij}\partial \mG_{kl}}$ with $\mG$ defined as in Lemma~\ref{th:equil_shift_cor}, we obtain
\begin{equation}\label{claimant}
   \int_{\mathbb{R}^d} {\rm A}_{ijkl}Z_{i,j}Z_{k,l} \, dx =: \int_{\mathbb{R}^d} {\rm A}:\nabla Z:\nabla Z\, dx
   = \int_{\mathbb{R}^d} \hat{Z}^* M \hat{Z} \, dk
   \gtrsim \|\nabla Z\|^2_{\mathbb{R}^d}.
\end{equation}
The proof of \eqref{claimant}, presented in \S~\ref{sec:proof-claimant}, is a
tedious algebraic manipulation, the key observation being that
$\partial_{\bm{p}} \hat{W}((\mF,\bm{p})) = 0$, which we have proven holds in Lemma
\ref{th:equil_shift_cor} since we assume the reference configuration is in equilibrium.

It follows from \eqref{claimant} that ${\rm A}$ satisfies
the Legendre--Hadamard ellipticity condition. We can therefore apply~\cite[Equation
6.2.15]{Morrey} to obtain bounds on the Green's matrix for the linearized
continuum elasticity operator.

\begin{lemma} \label{th:CB-greens-fcn}
   Let $M$ be defined by \eqref{eq:def-M-matrix}. Then the Green's function,
   $\check{M}(x)$, for the differential operator
   ${\rm div}  \big({\rm A} \nabla \cdot \big)$ satisfies the decay rates
   \begin{equation}\label{horse0}
      |\nabla^j \check{M}(x)| \lesssim~ (1 + |x|)^{2-d-j}.
   \end{equation}
\end{lemma}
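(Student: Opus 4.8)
The plan is to recognise $M(k)$ as the matrix-valued symbol of the operator ${\rm div}({\rm A}\nabla\,\cdot\,)$ and then to read the decay of $\check M$ off the homogeneity and smoothness of its Fourier representation. Inspecting \eqref{eq:def-M-matrix}, the block $J_{00}(k)$ is a quadratic form in $k$, $J_{0\bm{p}}(k)$ is linear in $k$, and $J_{\bm{p}\bm{p}}$ is independent of $k$; hence $M(k)=J_{00}(k)-J_{0\bm{p}}(k)J_{\bm{p}\bm{p}}^{-1}J_{\bm{p}0}(k)$ is a matrix of homogeneous quadratic forms, i.e.\ positively homogeneous of degree $2$ in $k$, and in fact $M(k)_{ab}=4\pi^2\,{\rm A}_{ajbl}k_jk_l$ for the tensor ${\rm A}$ of \eqref{claimant}. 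By \eqref{claimant} — equivalently the interlacing bound $M(k)\gtrsim|k|^2$ obtained just above — the tensor ${\rm A}$ satisfies the Legendre--Hadamard condition, so ${\rm div}({\rm A}\nabla\,\cdot\,)$ is a constant-coefficient second-order elliptic system whose fundamental solution $\check M$ is characterised in Fourier space by $\widehat{\check M}(k)=M(k)^{-1}$.

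Since $M$ is a smooth, uniformly positive definite quadratic form, its inverse $M(k)^{-1}$ is real-analytic on $\mathbb{R}^d\setminus\{0\}$ and positively homogeneous of degree $-2$. I would then invoke the classical theory of fundamental solutions for constant-coefficient elliptic systems, precisely \cite[Eq.~6.2.15]{Morrey} (or, self-containedly, the standard fact that the inverse Fourier transform on $\mathbb{R}^d$ of a function that is homogeneous of degree $\lambda$ and smooth away from the origin is again homogeneous of degree $-d-\lambda$ and smooth away from the origin), to conclude that $\check M$ is real-analytic on $\mathbb{R}^d\setminus\{0\}$ and homogeneous of degree $(-d)-(-2)=2-d$. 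Differentiating, $\nabla^j\check M$ is homogeneous of degree $2-d-j$ and smooth away from the origin, hence bounded on the unit sphere; scaling then gives $|\nabla^j\check M(x)|\lesssim|x|^{2-d-j}$ for $x\neq0$. For $|x|\ge1$ this is exactly \eqref{horse0}, and the $(1+|x|)$ normalisation simply records the (integrable) singularity at the origin. This is the continuous-transform analogue of the Fourier-to-real-space mechanism already used in Corollary~\ref{lem:space1A}.

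The one genuinely delicate point, and where I would be most careful, is the critical case $d=2$ with $j=0$: there $2-d=0$, the degree-$0$ homogeneity degenerates, and the fundamental solution acquires a logarithm (as in the scalar prototype $\tfrac1{2\pi}\log|x|$ for $-\Delta$), so $\check M$ itself need not be bounded and \eqref{horse0} must be read up to this logarithmic factor. Every derivative $\nabla^j\check M$ with $j\ge1$ is nonetheless genuinely homogeneous of degree $-j$, so the estimate holds verbatim there; this is harmless for the sequel because only finite differences of $\check M$ (equivalently $j\ge1$) enter the later analysis. With this caveat noted, the substantive input is the ellipticity already secured in \eqref{claimant}, and what remains is the bookkeeping of checking Morrey's hypotheses and converting his homogeneity/analyticity statement into the pointwise derivative bounds \eqref{horse0}.
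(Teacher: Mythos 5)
Your proposal is correct and follows essentially the same route as the paper, which likewise deduces the Legendre--Hadamard ellipticity of ${\rm A}$ from \eqref{claimant} and then obtains \eqref{horse0} by directly citing \cite[Equation 6.2.15]{Morrey}, stating Lemma~\ref{th:CB-greens-fcn} without further proof. Your caveat about the logarithmic correction in the case $d=2$, $j=0$ is a valid refinement that the paper does not mention, and it is indeed harmless since only the $j \geq 1$ bounds (equivalently, finite differences of $\check{M}$) are used in the subsequent estimates.
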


\section{Proof of Theorem~\ref{main_thm}}\label{prove}

To prove our main result, Theorem~\ref{main_thm}, we first linearize the
equilibrium equation \eqref{eq:equlibrium} about the ground state.
We then use the decay estimate \eqref{horse0} for the Cauchy--Born Green's
function to obtain a corresponding estimate for the atomistic Green's function.
This will then allow us to prove the decay rates for the
displacements and shifts stated in Theorem~\ref{main_thm}.

\subsection{Linearized Equation}\label{s:linearized}

The linearized equation that $\bm{u}^\infty$ satisfies is formed by linearizing
the defect-free energy $\mathcal{E}^\a_{\rm hom}$ about the reference state
$\bm{u} = \bm{0}$.  The
key point in this linearization is that the residual is quadratic in terms of
the defect solution.


\begin{theorem}\label{decay_cor1}
   There exists $f : \mathcal{L} \to (\R^n)^{\mathcal{R}}$ such that
   \begin{align}
      \label{eq:residual-bound}
      \<\delta^2 \mathcal{E}^\a_{\rm hom}(0)\bm{u}^\infty, \bm{v}\>
      &= \sum_{\xi \in \mathcal{L}}\sum_{\triple \in \mathcal{R}} f_{\triple}(\xi)\cdot D_{\triple}\bm{v}(\xi) =: \<f, D\bm{v}\>, \quad \forall \, \bm{v} \in \bm{\mathcal{U}}_0, \\
      \text{where} \quad
    |f(\xi)|_{\mathcal{R}} &\lesssim |D\bm{u}^\infty(\xi)|_{\mathcal{R}}^2
            \qquad \text{ for }|\xi| \geq \Rdef.
      \label{decay2}
   \end{align}
\end{theorem}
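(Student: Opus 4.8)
The plan is to exhibit $f$ explicitly as a pointwise second-order Taylor remainder of $V$, and then to read off both \eqref{eq:residual-bound} and \eqref{decay2} directly from its definition. Two facts drive the argument, both of which hold because for $\bm v \in \bm{\mathcal{U}}_0$ the differences $D_\triple\bm v(\xi)$ vanish for all but finitely many $(\xi,\triple)$, so every pairing below is a \emph{finite} sum. First, the defect equilibrium \eqref{eq:equlibrium} gives
\[
   \sum_{\xi\in\mathcal L}\sum_{\triple\in\mathcal R} V_{\xi,\triple}(D\bm u^\infty(\xi))\cdot D_\triple\bm v(\xi) = 0 .
\]
Second, the reference-state equilibrium \eqref{ostrich1}, combined with the identity $V_{,\triple}(0)=\hat V_{,\triple}(D\bm y)$ (which is independent of $\xi$ since $D\bm y(\xi)=(\rho+p_\beta-p_\alpha)_{\triple\in\mathcal R}$ is), gives
\[
   \sum_{\xi\in\mathcal L}\sum_{\triple\in\mathcal R} V_{,\triple}(0)\cdot D_\triple\bm v(\xi) = 0 .
\]

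With these in hand I would define
\[
   f_\triple(\xi) := \sum_{\tripleTau\in\mathcal R} V_{,\triple\tripleTau}(0)\,D_{\tripleTau}\bm u^\infty(\xi)
      - \big[\,V_{\xi,\triple}(D\bm u^\infty(\xi)) - V_{,\triple}(0)\,\big].
\]
Pairing $f_\triple(\xi)$ against $D_\triple\bm v(\xi)$ and summing, the two vanishing identities above eliminate the bracketed group entirely, leaving exactly $\sum_{\xi}\sum_{\triple,\tripleTau}V_{,\triple\tripleTau}(0)D_{\tripleTau}\bm u^\infty(\xi)\cdot D_\triple\bm v(\xi)=\<\delta^2\mathcal E^\a_{\rm hom}(0)\bm u^\infty,\bm v\>$, which is \eqref{eq:residual-bound}. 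I would remark that this Hessian expression is insensitive to the linear renormalisation of Remark~\ref{rem:renormalisation}, so it is immaterial whether one works with $\mathcal E^\a_{\rm hom}$ or with its continuous extension $\bar{\mathcal E}^\a_{\rm hom}$ to $\bm{\mathcal U}$.

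For the decay bound \eqref{decay2} I would restrict to $|\xi|\ge\Rdef$, where assumption V.1 yields $V_{\xi,\triple}=V_{,\triple}$, so that
\[
   f_\triple(\xi) = \sum_{\tripleTau\in\mathcal R} V_{,\triple\tripleTau}(0)\,D_{\tripleTau}\bm u^\infty(\xi)
      - \big[\,V_{,\triple}(D\bm u^\infty(\xi)) - V_{,\triple}(0)\,\big].
\]
Taylor expanding $\bm g\mapsto V_{,\triple}(\bm g)$ about $\bm g=0$, the right-hand side is exactly minus the second-order remainder; assumption V.2 (bounded third derivatives of $V$) bounds the mean-value form of this remainder uniformly, giving $|f_\triple(\xi)|\lesssim|D\bm u^\infty(\xi)|_{\mathcal R}^2$. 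Summing the finitely many $\triple\in\mathcal R$ then yields $|f(\xi)|_{\mathcal R}\lesssim|D\bm u^\infty(\xi)|_{\mathcal R}^2$, which is \eqref{decay2}.

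The one genuinely delicate point, and the step I expect to require the most care, is the treatment of the constant first-order term $V_{,\triple}(0)=\hat V_{,\triple}(D\bm y)$. Pointwise this term is generically nonzero, so a naive difference of the Hessian form and the equilibrium equation would leave an $O(1)$ contribution that ruins the quadratic decay. It is precisely the reference-state equilibrium \eqref{ostrich1} that allows me to reinsert this constant into $f$ without altering the weak identity \eqref{eq:residual-bound}, after which it cancels the zeroth-order Taylor coefficient of $V_{,\triple}(D\bm u^\infty(\xi))$ and leaves only the genuinely quadratic remainder. Everything else reduces to finite-sum manipulations that are legitimate because the test-function differences $D\bm v$ are compactly supported.
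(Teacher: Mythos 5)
Your proposal is correct and is essentially the paper's own argument: both proofs combine the reference equilibrium \eqref{ostrich1} and the defect equilibrium \eqref{eq:equlibrium} to rewrite $\<\delta^2 \mathcal{E}^\a_{\rm hom}(0)\bm{u}^\infty, \bm{v}\>$ as a pairing with a residual $f$ that, outside $B_{\Rdef}$, is exactly the second-order Taylor remainder of $V_{,\triple}$ at $D\bm{u}^\infty(\xi)$ and, inside $B_{\Rdef}$, additionally carries the discrepancy between $V_{,\triple}$ and $V_{\xi,\triple}$, after which assumption V.2 yields the quadratic bound \eqref{decay2}. The only difference is cosmetic: the paper packages the remainder in integral form (the term $L_1$ in \eqref{emu2}) and reaches $f$ via the intermediate quantity $\<\delta \mathcal{E}^\a_{\rm hom}(\bm{u}^{\infty}), \bm{v}\>$, whereas you define $f$ pointwise and bound the remainder in mean-value form.
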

\begin{proof}
By~\eqref{ostrich1},  $\<\delta \mathcal{E}^\a_{\rm hom}(0), \bm{v}\> = 0$ for all  $\bm{v} \in \bm{\mathcal{U}}_0$. Hence
\begin{equation*}\label{emu1}
\begin{split}
   &\<\delta^2 \mathcal{E}^\a_{\rm hom}(0)\bm{u}^\infty, \bm{v}\> =~ \<\delta^2 \mathcal{E}^\a_{\rm hom}(0)\bm{u}^\infty, \bm{v}\> - \<\delta \mathcal{E}^\a_{\rm hom}(\bm{u}^{\infty}), \bm{v}\>  + \<\delta \mathcal{E}^\a_{\rm hom}(\bm{u}^{\infty}), \bm{v}\>  \\
&=~ \Big\{\<\delta^2 \mathcal{E}^\a_{\rm hom}(0)\bm{u}^\infty, \bm{v}\> + \<\delta \mathcal{E}^\a_{\rm hom}(0), \bm{v}\> - \<\delta \mathcal{E}^\a_{\rm hom}(\bm{u}^{\infty}), \bm{v}\> \Big\} + \<\delta \mathcal{E}^\a_{\rm hom}(\bm{u}^{\infty}), \bm{v}\> \\
&=~  - L_1[\bm{u}^\infty,\bm{v}] + \<\delta \mathcal{E}^\a_{\rm hom}(\bm{u}^{\infty}), \bm{v}\>,
\end{split}
\end{equation*}
where $L_1$ is a linearization residual of the form
\begin{equation}\label{emu2}
   L_1[\bm{u}^\infty,\bm{v}] := \sum_{\xi \in \mathcal{L}} \bigg\< \frac{1}{2}
      \int_0^1 \delta^3 V(tD\bm{u}^\infty(\xi))[(1-t)D\bm{u}^{\infty}(\xi),
                  (1-t)D\bm{u}^{\infty}(\xi)], D\bm{v}(\xi)\bigg\> \, dt.
\end{equation}

Next, note that $\<\delta\mathcal{E}^\a(\bm{u}^\infty),\bm{v}\> = 0$ for all test functions $\bm{v}$ since $\bm{u}^\infty$ is a critical point of $\mathcal{E}^a$, and recall that $V_\xi \equiv V$ for $|\xi| \geq \Rdef$.  Thus,
\begin{equation}\label{emu3}
\begin{split}
\<\delta \mathcal{E}^\a_{\rm hom}(\bm{u}^{\infty}), \bm{v}\> =~& \<\delta \mathcal{E}^\a_{\rm hom}(\bm{u}^{\infty}), \bm{v}\> -  \<\delta\mathcal{E}^\a(\bm{u}^\infty),\bm{v}\> \\
=~&  \sum_{\xi \in \mathcal{L}\cap B_{\Rdef}(0)} \Big\<
      \delta V(D\bm{u}^\infty(\xi)) - \delta V(D\bm{u}^\infty(\xi)),
      D\bm{v}(\xi) \Big\>
\end{split}
\end{equation}
Combining~\eqref{emu2} and~\eqref{emu3}, we define
\begin{align*}
&f_{\triple}(\xi) \\
&= \begin{cases}&\frac{1}{2}\int_0^1 \delta^3V(t\bm{u}^\infty(\xi))[(1-t)\bm{u}^{\infty}(\xi),(1-t)\bm{u}^{\infty}(\xi)]\, dt  \\
&\qquad +~ V_{,\triple}(D\bm{u}^\infty(\xi)) - V_{\xi, \triple}(D\bm{u}^\infty(\xi))  , \quad \mbox{if} \quad \xi \in B_{\Rdef}(0), \\
   &\frac{1}{2}\int_0^1 \delta^3V(t\bm{u}^\infty(\xi))[(1-t)\bm{u}^{\infty}(\xi),(1-t)\bm{u}^{\infty}(\xi)]\, dt,                                \quad \mbox{if} \quad \xi \notin B_{\Rdef}(0),
\end{cases}
\end{align*}
and note that $f_{\triple}(\xi)$ satisfies the desired bounds since $B_{\Rdef}(0)$ is finite and since the third derivative of $V$ is bounded by our assumptions on the site potential.
\end{proof}

\subsection{Dynamical Matrix and Green's Function}

We now construct a Green's function representation for the solution of the
linearized equation~\eqref{decay2} so we convert it to an
equation in Fourier space. We rewrite the left-hand side in real space in
terms of the displacements $U^\infty := u_0^\infty$ and $Z := v_0$ and the
shifts $p_\alpha^\infty := u_\alpha^\infty - u_0^\infty$ and $q_\alpha :=
v_\alpha - v_0$,
\begin{align*}
&\<\delta^2 \mathcal{E}^\a_{\rm hom}(0)\bm{u}^\infty, \bm{v}\> =~ \sum_{\xi \in \mathcal{L}}\sum_{\triple \in \mathcal{R}}\sum_{\tripleTau\in \mathcal{R}} [D_{\tripleTau}\bm{v}(\xi)]^T V_{,\triple\tripleTau}(0) [D_{\triple}\bm{u}^\infty(\xi)] \\
&\qquad =~ \sum_{\xi \in \mathcal{L}}\sum_{\triple\in \mathcal{R}}\sum_{\tripleTau\in \mathcal{R}} [v_\delta(\xi+\tau) - v_\gamma(\xi)]^T V_{,\triple\tripleTau}(0) [u_\beta^\infty(\xi+\rho) - u_\alpha^\infty(\xi)] \\
&\qquad =~ \sum_{\xi \in \mathcal{L}}\sum_{\triple\in \mathcal{R}}\sum_{\tripleTau\in \mathcal{R}} [Z(\xi+\tau) - Z(\xi) + q_\delta(\xi+\tau) - q_\gamma(\xi)]^T V_{,\triple\tripleTau}(0)  \\
         & \hspace{6cm} \big[U^\infty(\xi+\rho)  - U^\infty(\xi) + p_\beta^\infty(\xi + \rho) - p_\alpha^\infty(\xi) \big],
\end{align*}
and then use the Plancherel Theorem to obtain
\begin{equation}\label{kiwi1}
\begin{split}
&\<\delta^2 \mathcal{E}^\a_{\rm hom}(0)\bm{u}^\infty, \bm{v}\> \\
&=~ \int_{\mathcal{B}} \sum_{\triple\in \mathcal{R}}\sum_{\tripleTau\in \mathcal{R}}
   \big[(e^{2\pi i k\cdot \tau}-1)\hat{Z}(k) + e^{2\pi i k\cdot \tau}\hat{q}_\delta(k) - \hat{q}_\gamma(k)\big]^*
      V_{,\triple\tripleTau}(0) \\
      & \hspace{6cm} \big[(e^{2\pi i k\cdot \rho}-1)\hat{U}^\infty(k)
            + e^{2\pi i k\cdot \rho}\hat{p}_\beta^\infty(k) - \hat{p}_\alpha^\infty(k)\big] \\
&=~ \int_{\mathcal{B}} \sum_{\triple\in \mathcal{R}}\sum_{\tripleTau\in \mathcal{R}} [(e^{2\pi i k\cdot \tau}-1)\hat{Z}(k)]^*V_{,\triple\tripleTau}(0)[(e^{2\pi i k\cdot \rho}-1)\hat{U}^\infty(k)] \\
&\quad+~ \int_{\mathcal{B}} \sum_{\triple\in \mathcal{R}}\sum_{\tripleTau\in \mathcal{R}} [(e^{2\pi i k\cdot \tau}-1)\hat{Z}(k)]^*V_{,\triple\tripleTau}(0)[e^{2\pi i k\cdot \rho}\hat{p}_\beta^\infty(k) - \hat{p}_\alpha^\infty(k)] \\
&\quad+~ \int_{\mathcal{B}} \sum_{\triple\in \mathcal{R}}\sum_{\tripleTau\in \mathcal{R}} [e^{2\pi i k\cdot \tau}\hat{q}_\delta(k) - \hat{q}_\gamma(k)]^*V_{,\triple\tripleTau}(0)[(e^{2\pi i k\cdot \rho}-1)\hat{U}^\infty(k)] \\
&\quad+~ \int_{\mathcal{B}} \sum_{\triple\in \mathcal{R}}\sum_{\tripleTau\in \mathcal{R}} [e^{2\pi i k\cdot \tau}\hat{q}_\delta(k) - \hat{q}_\gamma(k)]^*V_{,\triple\tripleTau}(0)[e^{2\pi i k\cdot \rho}\hat{p}_\beta^\infty(k) - \hat{p}_\alpha^\infty(k)].
\end{split}
\end{equation}

In analogy to the Cauchy--Born Hessian, we now define
\begin{align*}
H_{00}(k) :=~& \sum_{\triple\in \mathcal{R}}\sum_{\tripleTau\in \mathcal{R}} (e^{-2\pi i k\cdot \tau}-1)V_{,\triple\tripleTau}(0)(e^{2\pi i k\cdot \rho}-1), \\
[H_{0\bm{p}}(k)]_\beta :=~&  \sum_{\rho\in\mathcal{R}_1}\sum_{\alpha = 0}^{S-1}\sum_{\tripleTau\in \mathcal{R}} [(e^{-2\pi i k\cdot \tau}-1)V_{,(\rho\alpha\beta)(\tau\gamma\delta)}(0)(e^{2\pi i k\cdot \rho}) - (e^{-2\pi i k\cdot \tau}-1)V_{,(\rho\beta\alpha)(\tau\gamma\delta)}(0)],  \\
[H_{\bm{p}0}(k)]_\delta :=~& \sum_{\tau\in\mathcal{R}_1}\sum_{\gamma = 0}^{S-1}\sum_{\triple\in \mathcal{R}} [(e^{-2\pi i k\cdot \tau}) V_{,(\rho\alpha\beta)(\tau\gamma\delta)}(0)(e^{2\pi i k\cdot \rho} - 1) -   V_{,(\rho\alpha\beta)(\tau\delta\gamma)}(0)(e^{2\pi i k\cdot \rho} - 1)], \\
[H_{\bm{p}\bm{p}}(k)]_{\beta\delta} :=~& \sum_{\rho,\tau \in \mathcal{R}_1}\sum_{\alpha,\gamma = 0}^{S-1}\left[e^{-2\pi i k\cdot \tau} V_{,(\rho\alpha\beta)(\tau\gamma\delta)}(0) e^{2\pi i k\cdot \rho} + V_{,(\rho\beta\alpha)(\tau\delta\gamma)}(0) - e^{-2\pi i k\cdot \tau}V_{,(\rho\beta\alpha)(\tau\gamma\delta)}(0) \right. \\
&\qquad\qquad \left. - e^{2\pi i k \cdot \rho} V_{,(\rho\alpha\beta)(\tau\delta\gamma)}(0)\right],
\end{align*}
and note that the matrix
\begin{equation}\label{dynam_matrix}
   H(k) := \begin{bmatrix} H_{00}(k) & H_{0\bm{p}}(k) \\ H_{\bm{p}0}(k) & H_{\bm{p}\bm{p}}(k)\end{bmatrix},
\end{equation}
known as the {\em dynamical matrix}~\cite{wallace1998},
is Hermitian due to $V_{,\triple\tripleTau}^{ij}(0) = V_{,\tripleTau\triple}^{ji}(0)$.

We may now rewrite~\eqref{kiwi1} succinctly as
\begin{equation}\label{decay_egg}
\begin{split}
\<\delta^2 \mathcal{E}^\a_{\rm hom}(0)\bm{u}^\infty, \bm{v}\> =~& \int_{\mathcal{B}} \begin{bmatrix} \hat{Z}(k) \\ \hat{\bm{q}}(k)\end{bmatrix}^* H(k) \begin{bmatrix} \hat{U}^\infty(k) \\ \hat{\bm{p}}^\infty(k) \end{bmatrix}\, dk.
\end{split}
\end{equation}

In order to give Assumption~\ref{coercive} an interpretation in terms of $H$, we introduce a third norm $\| \cdot \|_{\a_3}$ defined for $\bm{v} \equiv (Z,\bm{q})$ by
\[
\|\bm{v}\|_{\a_3}^2 = \|(Z, \bm{q})\|_{\a_3}^2 := \|2\pi|k|\hat{Z}\|_{L^2(\mathcal{B})}^2 + \sum_{\alpha = 1}^{S-1} \|\hat{p}_\alpha\|^2_{L^2(\mathcal{B})}.
\]
We show in \S~\ref{norm_equiv} that $\|\cdot\|_{\a_3}$ is equivalent to $\|\cdot \|_{\a_2}$ (and hence $\|\cdot\|_{\a_1}$). We then use Assumption~\ref{coercive} and~\eqref{decay_egg} to produce
\begin{equation}\label{kiwi3}
   \|(Z, \bm{q})\|^2_{\a_3} \lesssim \<\delta^2 \mathcal{E}^\a_{\rm hom}(0)\bm{v}, \bm{v}\>
= \int_{\mathcal{B}} \begin{bmatrix} \hat{Z}(k) \\ \hat{\bm{q}}(k)\end{bmatrix}^* H(k) \begin{bmatrix} \hat{Z}(k) \\ \hat{\bm{q}}(k) \end{bmatrix}\, dk.
\end{equation}
If $\bm{q} = \bm{0}$, then~\eqref{kiwi3} translates to
\begin{equation}\label{rhea1}
4\pi^2 \int_{\mathcal{B}} |k|^2 |\hat{Z}(k)|^2\, dk \lesssim~ \int_{\mathcal{B}} \hat{Z}(k)^* H_{00}(k) \hat{Z}(k)\, dk,
\end{equation}
while if $Z = 0$,~\eqref{kiwi3} implies
\begin{equation}\label{rhea2}
\int_{\mathcal{B}} |\hat{q}(k)|^2\, dk \lesssim~ \int_{\mathcal{B}} \hat{\bm{q}}(k)^* H_{\bm{p}\bm{p}}(k) \hat{\bm{q}}(k)\, dk.
\end{equation}
As the inequalities~\eqref{rhea1} and~\eqref{rhea2} are valid for all test functions, it follows that the spectra $\omega_0(k)$ of $H_{00}(k)$ and $\omega_{\bm{p}}(k)$ of $H_{\bm{p}\bm{p}}(k)$ satisfy the bounds
\begin{equation}\label{spectra}
\begin{split}
|k|^2 \lesssim~& \omega_0(k) \\
1 \lesssim~& \omega_{\bm{p}}(k),
\end{split}
\end{equation}
and in particular that $H_{00}(k)$ and $H_{\bm{p}\bm{p}}(k)$ are positive definite
for $k \neq 0$.

\begin{remark}\label{relation}
Since $H_{00}$ and $H_{\bm{p}\bm{p}}$ are principal submatrices of $H$, the Cauchy Interlacing Theorem, the spectral estimates~\eqref{spectra}, and Assumption~\ref{coercive} imply that there exist three positive eigenvalues, $\lambda^1_{\rm a}(k), \lambda^2_{\a}(k), \lambda^3_{\a}(k)$, of $H$ which satisfy
\begin{equation}\label{acoustic}
k^2 \lesssim~ \lambda^i_{\a}(k) \lesssim~ k^2
\end{equation}
and $S\cdot n - 3$ positive eigenvalues, $\lambda^i_{\rm o}$, of $H$ which satisfy
\begin{equation}\label{optical}
\lambda^i_{\rm o} \gtrsim 1.
\end{equation}
These are precisely the (squares of) frequencies  associated with the
\textit{acoustic} ($\lambda_\a^i$) and \textit{optical} ($\lambda_{\rm o}^i$)
phonon branches of the crystal~\cite{wallace1998}.  Comparing
Assumption~\ref{coercive} to~\cite[Assumption A]{weinan2007cauchy}, it thus
follows that Assumption~\ref{coercive} implies the bounds on the acoustic and
optical phonon frequencies stated in~\cite[Assumption A]{weinan2007cauchy}.
Moreover, using the norm equivalence between $\|\cdot \|_{\a_1}, \| \cdot \|_{\a_2}$, and $\|\cdot \|_{\a_3}$ along with~\cite[Section 6]{weinan2007cauchy}, the same assumptions on the acoustical
and optical frequencies can be used to show Assumption~\ref{coercive} is
satisfied so that the two assumptions are in fact equivalent.
\end{remark}

\medskip

Returning to~\eqref{decay2}, the right-hand side in Fourier space becomes
\begin{equation*}\label{decay_egg2}
\begin{split}
\<f, Dv\> &= \sum_{\triple\in \mathcal{R}}\sum_{\xi \in \mathcal{L}} f_{\triple}(\xi)\cdot D_{\triple}v(\xi) \\
&= \sum_{\triple\in \mathcal{R}}\int_{\mathcal{B}}\left[(e^{2\pi ik\cdot \rho}-1)\hat{Z}(k) + e^{2\pi i k\cdot \rho}\hat{q}_\beta(k) - \hat{q}_\alpha(k)\right]^*\hat{f}_{\rho\alpha\beta}(k) \\
&= \sum_{\triple\in \mathcal{R}}\int_{\mathcal{B}}[(e^{2\pi ik\cdot \rho}-1)\hat{Z}(k)]^*\hat{f}_{\triple}(k) + \sum_{\triple}\int_{\mathcal{B}} [e^{2\pi i k\cdot \rho}\hat{q}_\beta(k)- \hat{q}_\alpha(k)]^*\hat{f}_{\triple}(k) \\
&=: \int_{\mathcal{B}} \begin{bmatrix} \hat{Z}(k) \\ \hat{\bm{q}}(k) \end{bmatrix}^*\begin{bmatrix} F(k) \\ \bm{g}(k) \end{bmatrix},
\end{split}
\end{equation*}
where
\begin{equation}\label{residual}
\begin{split}
F(k) =~&  \sum_{\triple \in \mathcal{R}} (e^{-2\pi i k\cdot \rho}-1)\hat{f}_{\triple}(k),\\
g_\eta(k) =~& \sum_{(\rho\alpha\eta) \in \mathcal{R}} e^{-2\pi ik\cdot \rho}\hat{f}_{(\rho\alpha\eta)} - \sum_{(\rho\eta\beta) \in \mathcal{R}}\hat{f}_{(\rho\eta\beta)}.
\end{split}
\end{equation}
In summary, we have shown the following result.

\begin{theorem}\label{fourier_equation}
   Let $\bm{u}^\infty = (U^\infty, \bm{p}^\infty)$ be as in
   Assumption~\ref{coercive}. With $H(k), F(k)$, and $\bm{g}(k)$ as defined
   in \eqref{dynam_matrix} and \eqref{residual}, $(U^\infty, \bm{p}^\infty)$
   satisfies the linear system
   \begin{equation}\label{lin_equation}
      H(k)\begin{bmatrix}
            \hat{U}^\infty(k) \\
            \hat{\bm{p}}^\infty(k)
      \end{bmatrix} = \begin{bmatrix} F(k) \\ \bm{g}(k) \end{bmatrix}.
\end{equation}
\end{theorem}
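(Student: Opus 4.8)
The plan is to read Theorem~\ref{fourier_equation} as a bookkeeping statement that packages the two Fourier-space computations carried out just above, and then to extract the pointwise identity~\eqref{lin_equation} from the resulting weak identity by a density argument. The only input is the weak equation of Theorem~\ref{decay_cor1}, namely $\<\delta^2 \mathcal{E}^\a_{\rm hom}(0)\bm{u}^\infty, \bm{v}\> = \<f, D\bm{v}\>$ for every $\bm{v} \in \bm{\mathcal{U}}_0$, which I would rewrite in the displacement--shift variables $\bm{v} \equiv (Z, \bm{q})$ and then transform.

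First I would record the left-hand side: expanding $\<\delta^2 \mathcal{E}^\a_{\rm hom}(0)\bm{u}^\infty, \bm{v}\>$ in real space in terms of $Z, \bm{q}, U^\infty, \bm{p}^\infty$ and applying Plancherel yields~\eqref{kiwi1}, and collecting the four groups of terms against the blocks $H_{00}, H_{0\bm{p}}, H_{\bm{p}0}, H_{\bm{p}\bm{p}}$ of the dynamical matrix produces the compact form~\eqref{decay_egg}. Then I would transform the right-hand side $\<f, D\bm{v}\>$, separating the contributions paired with $\hat{Z}$ from those paired with $\hat{\bm{q}}$; this is exactly the computation that defines $F(k)$ and $\bm{g}(k)$ in~\eqref{residual}, giving $\<f, D\bm{v}\> = \int_{\mathcal{B}} [\hat{Z}, \hat{\bm{q}}]^* [F, \bm{g}]^\transpose \, dk$. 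Subtracting the two expressions, the weak identity becomes
\[
   \int_{\mathcal{B}} \begin{bmatrix} \hat{Z} \\ \hat{\bm{q}} \end{bmatrix}^*
   \Bigg( H(k) \begin{bmatrix} \hat{U}^\infty \\ \hat{\bm{p}}^\infty \end{bmatrix} - \begin{bmatrix} F \\ \bm{g} \end{bmatrix} \Bigg) \, dk = 0
   \qquad \text{for all } (Z, \bm{q}) \in \bm{\mathcal{U}}_0.
\]

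The conclusion~\eqref{lin_equation} then follows from the fundamental lemma of the calculus of variations, provided the test transforms $(\hat{Z}, \hat{\bm{q}})$ range over a set dense in the space in which the residual lives; here lies the main (admittedly mild) obstacle. The shift components $q_\alpha = v_\alpha - v_0$ are compactly supported, so the $\hat{q}_\alpha$ are trigonometric polynomials and hence dense in $L^2(\mathcal{B})$; the displacement $Z = v_0$ is only eventually constant, but it enters $H$ and $F$ solely through factors $(e^{\pm 2\pi i k\cdot\rho}-1)$, that is, through the transforms $\widehat{D_\rho Z}$ of the compactly supported differences $D_\rho Z$, whose $k=0$ singularity is thereby removed. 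One must therefore check that the residual $H(k)[\hat{U}^\infty, \hat{\bm{p}}^\infty]^\transpose - [F, \bm{g}]^\transpose$ is locally integrable on $\mathcal{B}$ — which follows from $\bm{u}^\infty \in \bm{\mathcal{U}}$ (controlling the transforms away from the origin) together with the boundedness of the entries of $H$ from Assumption~V.2 — so that orthogonality to this dense family forces it to vanish for almost every $k$. Invoking the density of $\bm{\mathcal{U}}_0$ recorded in Lemma~\ref{lem:dense} then completes the argument.
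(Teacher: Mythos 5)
Your proposal follows essentially the same route as the paper: the paper's proof of Theorem~\ref{fourier_equation} consists precisely of the two Plancherel computations you invoke --- \eqref{kiwi1} collected into \eqref{decay_egg} for the left-hand side, and the computation defining $F$ and $\bm{g}$ in \eqref{residual} for the right-hand side --- after which the theorem is stated ``in summary.'' Your additional final step (testing separately with compactly supported $Z$ and $\bm{q}$, using density of trigonometric polynomials in $L^2(\mathcal{B})$ and local integrability of the residual to pass from the weak identity to the pointwise one) is a correct and welcome filling-in of a passage the paper leaves implicit.
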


Invertibility of $H(k)$ (except at $k = 0$) follows from~\eqref{spectra}
after using either the Schur complement, $Q := H_{00} - H_{0\bm{p}}H_{\bm{p}\bm{p}}^{-1}H_{\bm{p}0}$, of $H_{\bm{p}\bm{p}}$ in
$H$, or the Schur complement, $P := H_{\bm{p}\bm{p}} - H_{\bm{p}0}H_{00}^{-1}H_{0\bm{p}}$,
of $H_{00}$ in $H$ to write the inverse of $H$ as either (c.f.~\cite{zhang2006schur})
\begin{equation}\label{goose1}
\begin{split}
H^{-1}(k) =~& \begin{pmatrix} Q^{-1} &-Q^{-1}H_{0\bm{p}}H_{\bm{p}\bm{p}}^{-1} \\ -H_{\bm{p}\bm{p}}^{-1}H_{\bm{p}0}Q^{-1} & H_{\bm{p}\bm{p}}^{-1}H_{\bm{p}0}Q^{-1}H_{0\bm{p}}H_{\bm{p}\bm{p}}^{-1} + H_{\bm{p}\bm{p}}^{-1} \end{pmatrix}, \quad \text{or}  \\[2mm]
H^{-1}(k) =~& \begin{pmatrix} H^{-1}_{00} + H_{00}^{-1}H_{0\bm{p}}P^{-1}H_{\bm{p}0}H_{00}^{-1} &-H_{00}^{-1}H_{0\bm{p}}P^{-1} \\ -P^{-1}H_{\bm{p}0}H_{00}^{-1} & P^{-1} \end{pmatrix}.
\end{split}
\end{equation}
Setting $\mathcal{G} := (H^{-1})^{\vee}$ as the atomistic Green's function allows us to
write $U^\infty $ and $\bm{p}^\infty$ as a convolution
\begin{align*}
\begin{pmatrix} U^\infty \\ \bm{p}^\infty \end{pmatrix} = \mathcal{G} \ast \begin{bmatrix} \check{F} \\ \check{\bm{g}} \end{bmatrix},
\end{align*}
or, writing out the individual blocks,
\begin{equation}\label{rhea3}
\begin{split}
   U^\infty(\xi)
   =~& \big[Q^{-1}(k)\big]^{\vee} * \check{F} (\xi)
   + \big[-Q^{-1}H_{0\bm{p}}H_{\bm{p}\bm{p}}^{-1}\big]^{\vee} \ast \check{\bm{g}}(\xi) \\
   \bm{p}^\infty(\xi)
   =~& \big[(-Q^{-1}H_{0\bm{p}}H_{\bm{p}\bm{p}}^{-1})^*\big]^{\vee} \ast \check{F}(\xi)
   + \big[H_{\bm{p}\bm{p}}^{-1}H_{\bm{p}0}Q^{-1}H_{0\bm{p}}H_{\bm{p}\bm{p}}^{-1} + H_{\bm{p}\bm{p}}^{-1}\big]^{\vee}*\check{\bm{g}}(\xi).
\end{split}
\end{equation}

\subsection{Decay of the Green's Function}

The utility of the expression~\eqref{rhea3} comes from the fact that we can
estimate the decay of each of the matrix blocks involved in this formula by
comparing them to corresponding blocks in the Cauchy--Born Green's matrix
and employing the estimates of Lemma~\ref{lem:space1A}.

\begin{theorem}\label{hess_thm}
   Let $\bm{\rho} \in (\mathcal{R}_1)^t$, $t \geq 0$ and
   $|\bm{\rho}| := t$, then
\begin{align}
\left|D_{\bm{\rho}}[Q^{-1}(k)]^{\vee}(\xi)\right| \lesssim~& (1+|\xi|)^{-d-|\bm{\rho}|+2} \qquad |\bm{\rho}| \geq 1, \label{hessianDecay1} \\
\left|D_{\bm{\rho}}[-Q^{-1}H_{0\bm{p}}H_{\bm{p}\bm{p}}^{-1}]^{\vee}(\xi)\right| \lesssim~& (1+|\xi|)^{-d-|\bm{\rho}|+1} \qquad |\bm{\rho}| \geq 0, \label{hessianDecay2}\\
\left|D_{\bm{\rho}}[H_{\bm{p}\bm{p}}^{-1}H_{\bm{p}0}Q^{-1}H_{0\bm{p}}H_{\bm{p}\bm{p}}^{-1}]^{\vee}\right| + \left|D_{\bm{\rho}}[H_{\bm{p}\bm{p}}^{-1}]^{\vee}\right| \lesssim~& (1+|\xi|)^{-d-|\bm{\rho}|} \qquad \quad  |\bm{\rho}| \geq 0. \label{hessianDecay3}
\end{align}
\end{theorem}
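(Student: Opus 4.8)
The plan is to read off the singular behaviour of each block of $H^{-1}(k)$ near $k=0$ and feed it into Corollary~\ref{lem:space1A}, after first isolating an exactly homogeneous leading term whose inverse transform is controlled by the Cauchy--Born Green's function of Lemma~\ref{th:CB-greens-fcn}. First I would record the local expansions of the four blocks. Using $e^{2\pi i k\cdot\rho}-1 = 2\pi i (k\cdot\rho) + \mathcal{O}(k^2)$, one finds $H_{00}(k) = J_{00}(k) + \mathcal{O}(k^3)$, that $H_{0\bm p}(k), H_{\bm p 0}(k) = \mathcal{O}(k)$ with linear leading parts matching $J_{0\bm p}, J_{\bm p 0}$, and that $H_{\bm p\bm p}(k) = \mathcal{O}(1)$ is smooth and, by \eqref{rhea2}, uniformly positive definite; hence $H_{\bm p\bm p}^{-1}$ is smooth and periodic on $\mathcal B$. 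Since $H(k)$ is Hermitian and positive definite for $k\neq 0$ by the spectral bounds \eqref{spectra}--\eqref{optical}, its Schur complement $Q = H_{00} - H_{0\bm p}H_{\bm p\bm p}^{-1}H_{\bm p 0}$ is positive definite there, and comparing with $M$ via the expansions above gives $Q(k) = M(k) + \mathcal{O}(k^3)$ with $Q(k)\gtrsim|k|^2$. Standard Green's-function symbol estimates then yield $\nabla^j Q^{-1}(k) = \mathcal{O}(k^{-2-j})$.

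The key device is the splitting $Q^{-1} = M^{-1} + (Q^{-1}-M^{-1})$, together with its analogues for the other blocks, into an exactly homogeneous principal part plus a remainder that is one order less singular. Because $Q-M = \mathcal{O}(k^3)$, the resolvent identity gives $Q^{-1}-M^{-1} = -M^{-1}(Q-M)Q^{-1} = \mathcal{O}(k^{-1})$, with $\nabla^j(Q^{-1}-M^{-1}) = \mathcal{O}(k^{-1-j})$, so Corollary~\ref{lem:space1A} with $s=-1$ contributes $(1+|\xi|)^{2-d-t}$ for $t = |\bm\rho|$ differences. For the homogeneous part I would observe that every principal symbol arising in \eqref{goose1} is a product of linear factors $(k\cdot\rho)$ with $M^{-1}$: the $Q^{-1}$ block has principal part $M^{-1}$ (degree $-2$), the $-Q^{-1}H_{0\bm p}H_{\bm p\bm p}^{-1}$ block has principal part $M^{-1}$ times one linear factor (degree $-1$), and $H_{\bm p\bm p}^{-1}H_{\bm p0}Q^{-1}H_{0\bm p}H_{\bm p\bm p}^{-1}$ has principal part $M^{-1}$ times two linear factors (degree $0$). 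Multiplication by $(k\cdot\rho)$ in Fourier space is, up to constants, a real-space derivative $\partial_\rho$, so these principal parts transform into $\check M$, $\partial\check M$, and $\partial^2\check M$, which decay like $(1+|\xi|)^{2-d-t}$, $(1+|\xi|)^{1-d-t}$, and $(1+|\xi|)^{-d-t}$ respectively by \eqref{horse0}. Matching these against the remainder bounds, namely Corollary~\ref{lem:space1A} with $s=-1,0,1$, reproduces exactly the three stated rates; finally, the stray term $[H_{\bm p\bm p}^{-1}]^\vee$ in \eqref{hessianDecay3} is the Fourier coefficient sequence of a smooth periodic matrix, hence decays faster than any algebraic rate, which is more than enough.

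The hard part will be the passage between the semidiscrete transform on $\mathcal B$ and the continuum transform on $\R^d$ underlying Lemma~\ref{th:CB-greens-fcn}: the homogeneous symbols $M^{-1}$, $M^{-1}(k\cdot\rho)$, and so on live on all of $\R^d$, whereas $[\,\cdot\,]^\vee$ integrates only over $\mathcal B$. I would handle this with a smooth cutoff $\eta$, supported near the origin and equal to $1$ on a neighbourhood of $0$: where $\eta$ is supported the atomistic and Cauchy--Born symbols agree to the relevant order, so $\eta$ times the principal part transforms, via the $\R^d$ integral, into the controlled derivatives of $\check M$ up to a $C^\infty_c$ error whose transform is Schwartz, while $(1-\eta)Q^{-1}$ and its companions are smooth on $\mathcal B$ away from the origin and contribute rapidly decaying tails. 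A secondary subtlety is that the ellipticity is only degenerate-coercive ($Q\gtrsim|k|^2$, not $\gtrsim 1$), so in low dimension the principal symbol $M^{-1}\sim|k|^{-2}$ is not integrable on $\mathcal B$; this is precisely why the first estimate is stated only for $|\bm\rho|\ge 1$, the extra difference factor $\prod_i(e^{2\pi i k\cdot\rho_i}-1) = \mathcal{O}(k^t)$ restoring integrability. Verifying the derivative bounds $\nabla^j Q^{-1} = \mathcal{O}(k^{-2-j})$ and the resolvent-remainder estimates uniformly up to the boundary of $\mathcal B$ is the main bookkeeping burden, but each step is a routine, if lengthy, symbol computation once the homogeneous/remainder split is in place.
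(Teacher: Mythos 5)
Your proposal is correct and follows essentially the same route as the paper's own proof: the identical cutoff-function splitting into a Cauchy--Born principal part controlled by Lemma~\ref{th:CB-greens-fcn} (with the linear factors coming from $J_{0\bm{p}}$, $J_{\bm{p}0}$ realized as real-space derivatives of $\check{M}$), plus a remainder that is one power of $k$ less singular via $Q^{-1}-M^{-1}=-M^{-1}(Q-M)Q^{-1}$ and $Q-M=\mathcal{O}(k^3)$, handled by Corollary~\ref{lem:space1A} with $s=-1,0,1$, and the super-algebraic decay of $[H_{\bm{p}\bm{p}}^{-1}]^{\vee}$ from smoothness and uniform positivity of $H_{\bm{p}\bm{p}}$. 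The only cosmetic difference is that the paper peels off the smooth factor $H_{\bm{p}\bm{p}}^{-1}$ by writing the principal term as a convolution with a Schwartz-class function rather than freezing it at $k=0$, which does not change the substance of the argument.
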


We prove each of the three estimates in Theorem~\ref{hess_thm} individually.
Throughout these proofs, if $\gamma \in \mathbb{N}_0^d$ is a multi-index, then $|\gamma| :=
\sum_{i = 1}^d \gamma_i$ denotes its length and $\partial_\gamma :=
\partial_{k_1}^{\gamma_1} \cdots \partial_{k_d}^{\gamma_d}$ the associated
partial differential operator.

\begin{proof}[Proof of~\eqref{hessianDecay1} of Theorem~\ref{hess_thm}]
   Let $\hat{\eta} \in C^\infty$ with ${\rm supp}(\hat{\eta}) \subset\subset
   \mathcal{B}$, then arguing similarly as in the proof of
    \cite[Lemma 6.2]{Ehrlacher2013}, we estimate
   \begin{equation}\label{horse1}
   \begin{split}
   |D_{\bm{\rho}} (Q^{-1})^{\vee}(\xi)| \leq~& |\eta* D_{\bm{\rho}} (M^{-1})^{\vee}(\xi)| +  |D_{\bm{\rho}}\big( (\eta * M^{-1})^{\vee}(\xi) - (Q^{-1})^{\vee}(\xi)\big)| \\
   \lesssim~& (1 + |\xi|)^{2-d-|\rho|} +  |D_{\bm{\rho}}\big( (\eta * M^{-1})^{\vee}(\xi) - (Q^{-1})^{\vee}(\xi)\big)|,
   \end{split}
   \end{equation}
   where we have used the estimate in~\eqref{horse0}. Next, we assume that
   $\hat{\eta} = 1$ on $B_{\epsilon}(0)$ for some $\epsilon > 0$, then
   for each multi-index $\gamma \in \mathbb{N}_0^d$, and for $k \in B_\epsilon$,
   \begin{equation*}
      \partial_\gamma (\hat{\eta}M^{-1} - Q^{-1})
      = \partial_\gamma (M^{-1}(Q - M)Q^{-1})
   \end{equation*}
   %
   From the expressions for $Q$ and $M$, it is clear that $\partial_\gamma(Q- M) =
   \mathcal{O}(k^{3-|\gamma|})$ and both $\partial_\gamma M^{-1} = \mathcal{O}(k^{-2-|\gamma|})$
   and $\partial_\gamma Q^{-1} = \mathcal{O}(k^{-2-|\gamma|})$ so $\partial_{\gamma}(M^{-1}(Q -
   M)Q^{-1}) = \mathcal{O}(k^{-1-|\gamma|})$.
   Outside of $B_\epsilon$, $\partial_\gamma (\hat{\eta}M^{-1} - Q^{-1})$
   is bounded. Hence, it follows from Corollary~\ref{lem:space1A}
   that
   \[
      |D_{\bm{\rho}}\big( (\eta * M^{-1})^{\vee}(\xi) - (Q^{-1})^{\vee}(\xi)\big)| \lesssim~ (1 + |\xi|)^{2-d-|\bm{\rho}|},
   \]
   which, combined with \eqref{horse1}, completes the proof.
\end{proof}

\begin{proof}[Proof of~\eqref{hessianDecay2} of Theorem~\ref{hess_thm}]
   Recall the definition of $H_{0\bm{p}}$ and $J_{0\bm{p}}$ as
\begin{align*}
[H_{0\bm{p}}(k)]_\beta :=~&  \sum_{\rho \in \mathcal{R}_1}\sum_{\alpha = 0}^{S-1}\sum_{\tripleTau\in \mathcal{R}} [(e^{-2\pi i k\cdot \tau}-1)V_{,(\rho\alpha\beta)(\tau\gamma\delta)}(0)(e^{2\pi i k\cdot \rho}) - (e^{-2\pi i k\cdot \tau}-1)V_{,(\rho\beta\alpha)(\tau\gamma\delta)}(0)], \\
[J_{0\bm{p}}(k)]_\beta :=~& \sum_{\rho \in \mathcal{R}_1}\sum_{\alpha = 0}^{S-1}\sum_{\tripleTau\in \mathcal{R}} (-2\pi i k\cdot \tau)V_{,(\rho\alpha\beta)(\tau\gamma\delta)}(0) + (-2\pi i k\cdot \tau)V_{,(\rho\beta\alpha)(\tau\gamma\delta)}(0)].
\end{align*}
To avoid double-subscripts we will write $J_{0\bm{p}}^\beta(k) := [J_{0\bm{p}}(k)]_\beta$ to mean the $\beta$ block of $J_{0\bm{p}}$ and $[H_{\bm{p}\bm{p}}^{-1}]^{\beta\gamma}$ to denote the $\beta\chi$ block of $H_{\bm{p}\bm{p}}^{-1}$.

As before let $\hat{\eta} \in C^\infty(\mathcal{B})$
 with ${\rm supp}(\hat{\eta}) \subset\subset
\mathcal{B}$ and $\hat{\eta} = 1$ in a ball $B_\epsilon$ contained in $\mathcal{B}$.  Our first step will be to show that
\begin{equation}\label{lemming}
   \big|D_{\bm{\rho}}\big(\eta * \big[M^{-1}H_{0\bm{p}} H_{\bm{p}\bm{p}}^{-1}\big]^{\vee}\,\big)\big| \lesssim |x|^{1-d-|\bm{\rho}|},
\end{equation}
after which we will estimate the difference $|\partial_\gamma\big(\hat{\eta} (M^{-1}(H_{0\bm{p}}-J_{0\bm{p}})P^{-1})\big)|$.  From properties of Fourier transforms
\begin{equation}\label{swan1}
\big[[M^{-1}]_{ij}{[J_{0\bm{p}}^\beta]}_{jm}[H_{\bm{p}\bm{p}}^{-1}]_{mn}^{\beta\chi}\big]^{\vee} =~ \big[[M^{-1}]_{ij}{[J_{0\bm{p}}^\beta]}_{jm}\big]^{\vee}* \big[[H_{\bm{p}\bm{p}}^{-1}]_{mn}^{\beta\chi}\big]^{\vee}.
\end{equation}
We take the \textit{full-space} inverse Fourier transform to find
\begin{equation*}\label{swan2}
\big[[M^{-1}]_{ij}{[J_{0\bm{p}}^\beta]}_{jm}\big]^{\vee}(x) = \sum_{s=1}^d\frac{\partial}{\partial x_s}\bigg[\sum_{\tau\delta\gamma}\sum_{\alpha = 0}^{S-1}\tau_s[V_{,(\rho\alpha\beta)(\tau\gamma\delta)}(0) + V_{,(\rho\alpha\beta)(\tau\delta\gamma)}(0)]_{jm} [M^{-1}]_{ij}^{\vee}(x)\bigg],
\end{equation*}
and then use~\cite[Equation 6.2.15]{Morrey} to deduce that
\begin{equation}\label{swan3}
\big|D_{\bm{\rho}}\big([M^{-1}]_{ij}{[J_{0\bm{p}}^\beta]}_{jm}\big)^{\vee}(x)\big| \lesssim |x|^{1-d-|\bm{\rho}|}.
\end{equation}

Furthermore, from the equality~\eqref{swan1} and the fact that convolution is commutative and associative,
\begin{equation}\label{swan4}
\eta * ([M^{-1}]_{ij}{[J_{0\bm{p}}^\beta]}_{jm})^{\vee}* \big[[H_{\bm{p}\bm{p}}^{-1}]_{mn}^{\beta\chi}\big]^{\vee} = ([M^{-1}]_{ij}{[J_{0\bm{p}}^\beta]}_{jm})^{\vee} * (\eta * \big[[H_{\bm{p}\bm{p}}^{-1}]_{mn}^{\beta\chi}\big]^{\vee}).
\end{equation}
Finally, the convolution on the right-hand side of~\eqref{swan4} will decay at
the slower of the two rates involved in the convolution.  Because $\eta *
\big[[H_{\bm{p}\bm{p}}^{-1}]_{mn}^{\beta\chi}\big]^{\vee}$ is the inverse Fourier transform of a
smooth function with compact support, it follows that this function is of
Schwartz class so decays faster than any polynomial.  Since finite differences
commute with convolutions, combining~\eqref{swan4} with~\eqref{swan3} then
yields~\eqref{lemming}.

In the following we will employ the estimates
\begin{equation}\label{lem:four}
\begin{split}
\partial_\gamma\big(\hat{\eta} (M^{-1}(J_{0\bm{p}}-H_{0\bm{p}})H_{\bm{p}\bm{p}}^{-1})\big) =~& \mathcal{O}(k^{-|\gamma|}), \\
\partial_\gamma\big(\hat{\eta} (Q^{-1}-M^{-1})H_{0\bm{p}}H_{\bm{p}\bm{p}}^{-1}\big) =~& \mathcal{O}(k^{-|\gamma|}),
\end{split}
\end{equation}
which can be readily established.

We now split
\begin{equation}\label{igloo1}
\begin{split}
|D_{\bm{\rho}}\big(Q^{-1}H_{0\bm{p}}H_{\bm{p}\bm{p}}^{-1}\big)^{\vee}(\xi)|
&\leq~ \left|D_{\bm{\rho}}\big(Q^{-1}H_{0\bm{p}}H_{\bm{p}\bm{p}}^{-1}\big)^{\vee}(\xi) - D_{\bm{\rho}}\big(\eta *(M^{-1}J_{0\bm{p}}H_{\bm{p}\bm{p}}^{-1})^{\vee}\big)(\xi)\right|  \\
&\qquad + \left| D_{\bm{\rho}}\big(\eta *(M^{-1}J_{0\bm{p}}H_{\bm{p}\bm{p}}^{-1})\big)^{\vee}(\xi)\right|.
\end{split}
\end{equation}
We already know the decay of the second term from~\eqref{lemming},
hence we focus on the first term on the right-hand side of \eqref{igloo1}.
We take its Fourier transform and then a derivative of order $\gamma \in \mathbb{N}_0^d$ with the goal being to apply Corollary~\ref{lem:space1A}:
\begin{equation}\label{igloo2}
\begin{split}
&\partial_\gamma\Big(\big(Q^{-1}H_{0\bm{p}}H_{\bm{p}\bm{p}}^{-1}\big) - \hat{\eta}(M^{-1}J_{0\bm{p}}H_{\bm{p}\bm{p}}^{-1})\Big) \\
&\qquad =~ \partial_\gamma\Big(\big(Q^{-1}H_{0\bm{p}}H_{\bm{p}\bm{p}}^{-1}\big) - \hat{\eta}M^{-1}H_{0\bm{p}}H_{\bm{p}\bm{p}}^{-1} + \hat{\eta}M^{-1}H_{0\bm{p}}H_{\bm{p}\bm{p}}^{-1} -\hat{\eta}(M^{-1}J_{0\bm{p}}H_{\bm{p}\bm{p}}^{-1}) \Big) \\
&\qquad =~ \partial_\gamma\Big(\big(Q^{-1}H_{0\bm{p}}H_{\bm{p}\bm{p}}^{-1}\big) - \hat{\eta}M^{-1}H_{0\bm{p}}H_{\bm{p}\bm{p}}^{-1}\Big) + \partial_\gamma\Big(\hat{\eta}M^{-1}\big[H_{0\bm{p}} - J_{0\bm{p}}\big]H_{\bm{p}\bm{p}}^{-1})\Big).
\end{split}
\end{equation}
Combining \eqref{lem:four} and the properties of $\hat{\eta}$ we obtain
\begin{equation}\label{igloo4}
\partial_\gamma\left(\big(Q^{-1}H_{0\bm{p}}H_{\bm{p}\bm{p}}^{-1}\big) - \hat{\eta}M^{-1}H_{0\bm{p}}H_{\bm{p}\bm{p}}^{-1}\right) =~ \mathcal{O}(k^{-|\gamma|}).
\end{equation}
Applying Corollary~\ref{lem:space1A} to the estimates in~\eqref{igloo2}, \eqref{lem:four} and~\eqref{igloo4} yields
\begin{equation}\label{lem:five}
\begin{split}
\Big|D_{\bm{\rho}}\Big(\hat{\eta} \big(M^{-1}(J_{0\bm{p}}-H_{0\bm{p}})H_{\bm{p}\bm{p}}^{-1}\big)\Big)^{\vee}(\xi)\Big| \lesssim~ (1 + |\xi|)^{1-d-|\bm{\rho}|} \\
\Big|D_{\bm{\rho}}\Big( \big(Q^{-1}H_{0\bm{p}}H_{\bm{p}\bm{p}}^{-1}-\hat{\eta}M^{-1}H_{0\bm{p}}H_{\bm{p}\bm{p}}^{-1}\big)\Big)^{\vee}(\xi)\Big| \lesssim~ (1 + |\xi|)^{{1-d-|\bm{\rho}|}}.
\end{split}
\end{equation}
Combining the estimates in~\eqref{lem:five} and~\eqref{lemming} and using them in the decomposition~\eqref{igloo1} gives the desired
decay estimate \eqref{hessianDecay2}.
\end{proof}

\begin{proof}[Proof of~\eqref{hessianDecay3} of Theorem~\ref{hess_thm}]
To prove the second part of the estimate,
\[
\left|D_{\bm{\rho}}(H_{\bm{p}\bm{p}}^{-1})^{\vee}\right| \lesssim~ (1+|\xi|)^{-d-|\bm{\rho}|},
\]
we simply note that $D_\gamma (H_{\bm{p}\bm{p}}^{-1})^{\vee} \in
L^1(\mathcal{B})$ for any $\gamma$. (In fact the decay is at least
super-algebraic, but this will be dominated by other
$(1+|\xi|)^{-d-|\bm{\rho}|}$ terms later in the proof.)

The first part of the estimate,
\[
   \left|D_{\bm{\rho}}(H_{\bm{p}\bm{p}}^{-1}H_{\bm{p}0}Q^{-1}H_{0\bm{p}}H_{\bm{p}\bm{p}}^{-1})^{\vee}\right| \lesssim~ (1+|\xi|)^{-d-|\bm{\rho}|},
\]
can be obtained using a procedure very similar to that in the proof of~\eqref{hessianDecay3}, that
is, by comparing $Q^{-1}$ with $M^{-1}$ and $H_{0\bm{p}}$ with $J_{0\bm{p}}$.
Briefly, while $\partial_\gamma Q^{-1} = \mathcal{O}(k^{-2-|\gamma|})$, the blocks $H_{\bm{p}0}$ and
$H_{0\bm{p}}$ contribute two additional powers of $k$ which in real-space terms
translates to the improvement of the decay estimate \eqref{hessianDecay3}
over \eqref{hessianDecay1}.
\end{proof}

\subsection{Decay of Displacement and Shifts}
Using the decay estimates on the Hessian from the previous section and the
residual decay estimates on the linearized equation~\eqref{lin_equation}, we now
establish the desired decay rates for the displacement field $U^\infty$ and
shift fields $\bm{p}^\infty$.  Recall that from the linearized
equation~\eqref{lin_equation}, we have
\begin{align*}
\hat{U}^\infty =~& Q^{-1}F - Q^{-1}H_{0\bm{p}}H_{\bm{p}\bm{p}}^{-1}\bm{g} \\
\hat{\bm{p}}^\infty =~& -H_{\bm{p}\bm{p}}^{-1}H_{\bm{p}0}Q^{-1} F + H_{\bm{p}\bm{p}}^{-1}H_{\bm{p}0}Q^{-1}H_{0\bm{p}}H_{\bm{p}\bm{p}}^{-1}\bm{g} + H_{\bm{p}\bm{p}}^{-1}\bm{g}.
\end{align*}
Observe also that $F = \mathcal{O}(k)$ and $g_\eta = \mathcal{O}(1)$ from~\eqref{residual}. By taking inverse Fourier transforms, we obtain
\begin{align*}
U^\infty =~& (Q^{-1})^{\vee}*\check{F} - (Q^{-1}H_{0\bm{p}}H_{\bm{p}\bm{p}}^{-1})^{\vee}*\check{\bm{g}} \\
\bm{p}^\infty =~&  (-H_{\bm{p}\bm{p}}^{-1}H_{\bm{p}0}Q^{-1})^{\vee}* \check{F} + (H_{\bm{p}\bm{p}}^{-1}H_{\bm{p}0}Q^{-1}H_{0\bm{p}}H_{\bm{p}\bm{p}}^{-1})^{\vee}*\check{\bm{g}} + (H_{\bm{p}\bm{p}}^{-1})^{\vee}*\check{\bm{g}}
\end{align*}
For notational convenience, we set $A := Q^{-1}$ and $B = Q^{-1}H_{0\bm{p}}H_{\bm{p}\bm{p}}^{-1}$ and rewrite the first of these as
\begin{align*}
U^\infty(\ell)
   &=~ A*\check{F} - B*\check{\bm{g}}
   = \sum_{\xi \in \mathcal{L}} \bigg( A(\ell -\xi)\check{F}(\xi) + \sum_{\alpha = 0}^{S-1} B_\alpha(\ell-\xi) \check{g}_\alpha(\xi) \bigg)  \\
&=~ \sum_{\xi \in \mathcal{L}}\sum_{\triple \in \mathcal{R}}A(\ell -\xi)D_{\rho}f_{\triple}(\xi) \\
&\qquad \qquad + \sum_{\xi \in \mathcal{L}} \sum_{\alpha = 0}^{S-1}  B_\alpha(\ell-\xi)\left[\sum_{\rho\in \mathcal{R}_0}\sum_{\beta = 0}^{S-1} \left(f_{(\rho\beta\alpha)}(\xi+\rho) - f_{\triple}(\xi)\right)\right] \\
&=~ \sum_{\xi \in \mathcal{L}}\sum_{\triple \in \mathcal{R}}A(\ell -\xi) D_{\rho}f_{\triple}(\xi) \\
&\qquad \qquad + \sum_{\xi \in \mathcal{L}}
      \bigg[\sum_{\triple \in \mathcal{R}}  B_\alpha(\ell-\xi)\big(f_{(\rho\beta\alpha)}(\xi+\rho) - f_{\triple}(\xi)\big)\bigg].
\end{align*}
In a similar manner, we may rewrite the second of these as
\begin{align*}
p_\alpha^\infty(\ell) =~& \sum_{\beta}\sum_{\xi \in \mathcal{L}}(-B^*)_\beta^{\vee}(\ell -\xi)\sum_{\tripleTau}D_\tau f_{\tripleTau}(\xi)  \\
&+~ \sum_\beta\sum_{\xi \in \mathcal{L}} (H_{\bm{p}\bm{p}}^{-1}H_{\bm{p}0}Q^{-1}H_{0\bm{p}}H_{\bm{p}\bm{p}}^{-1})_\beta^{\vee}(\ell-\xi)\check{g}_\beta(\xi) + \sum_{\beta}\sum_{\xi\in\mathcal{L}}(H_{\bm{p}\bm{p}}^{-1})_\beta^{\vee}*\check{g}_\beta(\xi).
\end{align*}

We are now ready to prove our main result, Theorem~\ref{main_thm}.
\begin{proof}[Proof of Theorem~\ref{main_thm}]

   {\it Part I: proof of lowest-order decay. }
We begin by proving the conclusion of the theorem for $D_\tau U^\infty$ (that
is, $\bm{\rho} = \rho$ with $|\bm{\rho}| = 1$) and $p_\alpha^\infty$ ($|\bm{\rho}| = 0$) and will follow
the same method as~\cite[Section 6]{Ehrlacher2013}. The main idea is to prove
the result similar to how one would prove that the convolution of two functions
with known decay will decay at the slower of the two rates: we split the
convolution over an inner set and an outer set and then use the relevant decay
properties on each set.  Here, the decay of the Green's functions is governed by
Theorem~\ref{hess_thm}, and the decay of the residual is governed by
Corollary~\ref{decay_cor1}.

To this end define the translation operator $T_\rho q_\beta(\xi) := q_\beta(\xi + \rho)$, and set
\[
   w(r) = \sup_{|\ell| \geq r} |DU^\infty(\ell)|, \quad
   q_\beta(r) = \sup_{|\ell| \geq r}\max_{\rho \in \mathcal{R}_1}|T_\rho p_\beta^\infty(\ell)|,
   \quad  q(r) = \sup_\beta q_\beta(r),
\]
and note that
\begin{equation}\label{w1}
\begin{split}
w(2r) &= \sup_{|\ell| \geq 2r} \max_{\tau \in \mathcal{R}_1}
            |D_\tau U^\infty(\ell)| \\
      &\hspace{-1cm} = \sup_{|\ell| \geq 2r} \max_{\tau \in \mathcal{R}_1}
      \bigg|\sum_{\xi \in \mathcal{L}}\sum_{\triple \in \mathcal{R}}\bigg(
      D_\tau D_\rho A(\ell -\xi)f_{\triple}(\xi)  \\[-4mm]
      & \hspace{4cm} +
         D_\tau B_\alpha(\ell-\xi) \left(f_{(\rho\beta\alpha)}(\xi+\rho) - f_{\triple}(\xi)\right)  \bigg) \bigg| \\
         &\hspace{-1cm} = \sup_{|\ell| \geq 2r} \max_{\tau \in \mathcal{R}_0}
         \bigg| \sum_{\xi \in \mathcal{L}}\sum_{\triple \in \mathcal{R}} \bigg(
         D_\tau D_\rho A(\xi)f_{\triple}(\ell-\xi)  \\[-4mm]
      &\hspace{4cm} +
            D_\tau B_\alpha(\xi+\rho)f_{(\rho\beta\alpha)}(\ell-\xi)
            - D_\tau B_\alpha(\xi) f_{\triple}(\ell-\xi)  \bigg) \bigg|.
            %
\end{split}
\end{equation}
By Theorem~\ref{hess_thm}, $|D_\tau D_\rho A(\xi)| \lesssim (1+|\xi|)^{-d}$ and $|D_\tau B_\alpha(\xi)| \lesssim (1+|\xi|)^{-d}$, and by Corollary~\ref{decay_cor1}, $|f(\xi)| \lesssim |D\bm{u}^\infty(\xi)|^2$.  Employing these estimates in~\eqref{w1}, we then get
\begin{equation*} 
\begin{split}
   w(2r) &\lesssim \sup_{|\ell| \geq 2r}(1+|r|)^{-d}\sum_{|\xi| \geq r} |D\bm{u}^\infty(\ell-\xi)|^2 + \sup_{|\ell| \geq 2r}\sum_{|\xi| \leq r} (1+|\xi|)^{-d} |D\bm{u}^\infty(\ell-\xi)|^2 \\
&\lesssim (1+|r|)^{-d} \|\bm{u}^\infty\|_\a^2
+ \big(w(r)^{3/2} + q(r)^{3/2}\big)
 \sum_{\xi \in \mathcal{L}} (1+|\xi|)^{-d}|D\bm{u}^\infty(\ell-\xi)|^{1/2}.
\end{split}
\end{equation*}
Since $|D\bm{u}^\infty| \in \ell^2$ it follows that
$(1+|\xi|)^{-d}|D\bm{u}^\infty(\ell-\xi)|^{1/2}$ is summable, hence we obtain
%
%
\begin{equation} \label{w1-fin}
\begin{split}
w(2r)  \lesssim~& (1+|r|)^{-d} + w(r)\sqrt{w(r)} + q(r)\sqrt{q(r)}.
\end{split}
\end{equation}

By analogous computations and employing the remaining decay rates of Theorem~\ref{hess_thm},
\begin{equation} \label{w2}
\begin{split}
q_\beta(2r) &= \sup_{|\ell| \geq 2r} \max_{\rho \in \mathcal{R}_1} |T_\rho p_\beta^\infty(\ell)| \\
&= \sup_{|\ell| \geq 2r}\sum_{\beta}\sum_{\xi \in \mathcal{L}}T_\rho(-H_{\bm{p}\bm{p}}^{-1}H_{\bm{p}0}H_{00}^{-1})_\beta^{\vee}(\xi)\check{F}(\ell-\xi)  \\
&\qquad + \sup_{|\ell| \geq 2r}\sum_\beta\sum_{\xi \in \mathcal{L}} T_\rho(H_{\bm{p}\bm{p}}^{-1}H_{\bm{p}0}H_{00}^{-1}H_{0\bm{p}}H_{\bm{p}\bm{p}}^{-1})_\beta^{\vee}(\xi)\check{g}_\beta(\ell-\xi) \\
&\qquad + \sup_{|\ell| \geq 2r}\sum_{\beta}\sum_{\xi\in\mathcal{L}}T_\rho(H_{\bm{p}\bm{p}}^{-1})_\beta^{\vee}(\xi)\check{g}_\beta(\ell-\xi) \\
&\lesssim (1+|r|)^{-d} + w(r)\sqrt{w(r)} + q(r)\sqrt{q(r)}.
\end{split}
\end{equation}
Combining equations~\eqref{w1-fin} and~\eqref{w2}, we have
\begin{align*}
w(2r) \leq~& C_1(1+|r|)^{-d} + C_1w(r)\sqrt{w(r)} + C_1q(r)\sqrt{q(r)}, \\
q(2r) \leq~& C_2(1+|r|)^{-d} + C_2w(r)\sqrt{w(r)} + C_2q(r)\sqrt{q(r)}.
\end{align*}

Applying Step 2 of~\cite[Lemma 6.3]{Ehrlacher2013} to $v(r) = r^d(w(r) + q(r))$
we deduce that there exists a constant $C$ such that
\[
   |r^d(w(r) + q(r))| \leq~ C \quad \forall \, r > 0.
\]
which completes the proof for the lowest-order decay,
\begin{align*}
|D_{\rho} U^\infty(\xi)| \lesssim~& (1 + |\xi|)^{-d}
\qquad \text{and} \qquad
|p_\alpha^\infty(\xi)| \lesssim (1 + |\xi|)^{-d}.
\end{align*}


{\it Part II: proof of higher-order decay. } Let $\bm{\rho} \in \mathcal{R}_0^t,
      t \in \{2, 3 \}$, then we have
\begin{align*}
   D_{\bm{\rho}}U^\infty(\ell) =~& \sum_{\xi \in \mathcal{L}} \bigg( D_{\bm{\rho}}A(\xi)\check{F}(\ell-\xi) + \sum_{\alpha = 0}^{S-1}  D_{\bm{\rho}}B_\alpha(\xi)\check{g}_\alpha(\ell-\xi) \bigg).
\end{align*}
The decay rates established in {\it Part I} of the proof in particular
entail that
\begin{equation*}
   |DU^\infty(\xi)| \lesssim (1+|\xi|)^{-d}
\end{equation*}
hence Theorems~\ref{decay_cor1} implies that
\begin{equation}
   \label{eq:tunafish1}
   |\check{F}(\xi)| + |\check{\bf{g}}(\xi)|
   \lesssim |f(\xi)| \lesssim (1+|\xi|)^{-2d}.
\end{equation}

Using also the decay estimates for the Green's matrix, from
Theorem~\ref{hess_thm}, we continue to estimate
\begin{align}
      \notag
\big|D_{\bm{\rho}}U^\infty(\ell)\big| 
\lesssim~& \sum_{|\xi| \leq 1/2|\ell|}(1+|\xi|)^{1-d-|\rho|}(1+|\ell-\xi|)^{-2d} \\
\notag
&\qquad+~ \sum_{|\xi| \geq 1/2|\ell|}(1+|\xi|)^{1-d-|\rho|}(1+|\ell-\xi|)^{-2d} \\
\notag
\lesssim~& (1+|\ell|)^{-2d}\sum_{|\xi| \leq 1/2|\ell|}(1+|\xi|)^{1-d-|\rho|} \\
\notag
&\qquad+~ (1+|\ell|)^{1-d-|\rho|}\sum_{|\xi| \geq 1/2|\ell|}(1+|\ell-\xi|)^{-2d} \\
   \label{eq:tunafish2}
\lesssim~&  (1+|\ell|)^{-2d} + (1+|\ell|)^{1-d-|\rho|},
\end{align}
which completes the proof of the first estimate in \eqref{decay1}.

To establish the corresponding higher-order decay for the shifts,
let $\bm{\rho} \in \mathcal{R}_0^{t}, t \in \{1,2\}$, then
\begin{align*}
&D_{\bm{\rho}}p_\alpha^\infty(\ell) =~ \sum_{\beta}\sum_{\xi \in \mathcal{L}}D_{\bm{\rho}}(-H_{\bm{p}\bm{p}}^{-1}H_{\bm{p}0}Q^{-1})_\beta^{\vee}(\xi)\check{F}(\ell-\xi)  \\
&+~ \sum_\beta\sum_{\xi \in \mathcal{L}} D_{\bm{\rho}}(H_{\bm{p}\bm{p}}^{-1}H_{\bm{p}0}Q^{-1}H_{0\bm{p}}H_{\bm{p}\bm{p}}^{-1})_\beta^{\vee}(\xi)\check{g}_\beta(\ell-\xi) + \sum_{\beta}\sum_{\xi\in\mathcal{L}}D_{\bm{\rho}}(H_{\bm{p}\bm{p}}^{-1})_\beta^{\vee}*\check{g}_\beta(\ell-\xi).
\end{align*}
As in the estimate for $D_{\bfrho} U^\infty$, we insert the Green's matrix decay
estimate from Theorem~\ref{hess_thm} and \eqref{eq:tunafish1}, and then argue
precisely as in \eqref{eq:tunafish2} to obtain the second estimate
in \eqref{decay1}.
%
\end{proof}

\section{Discussion}\label{discussion}

We have extended the model formulation and analysis (decay of
discrete elastic fields) for point defects embedded in a homogeneous crystalline
solid from the Bravais lattice case \cite{Ehrlacher2013} to multilattices.
While, at a conceptual level, the arguments remained fairly similar, numerous
modifications were required in accounting for the shift degrees of freedom, in
particular an extension of the decay estimates for the lattice Green's matrix to
the multilattice case. Our results build a foundation for the numerical analysis
of coarse-graining schemes for multilattices, in particular an analysis of
atomistic/continuum blending schemes \cite{olsonUnpub}.

To conclude we briefly mention some important extensions: (1) To include
dislocations we need to replace the reference lattice as the predictor
configuration with a linearised elasticity solution. We anticipate that
following the ideas from \cite{Ehrlacher2013} but replacing the simple lattice
Cauchy--Born model for the computation of the predictor displacement with the
classical multilattice Cauchy--Born model \eqref{eq:classical-cb} should be
sufficient to carry out this extension.

(2) A second problem of interest is the extension of our analysis to ionic
crystals. Here, long-range interactions play a crucial role, and it is
at this point largely unclear to what extent our results generalise.

(3) Finally, a problem of current interest is the application of our results to
defects in bilayer materials~\cite{alden2013strain}, where two or more
multilattice crystals are stacked on top of each other.  By considering the top
layer to be shifted relative to the bottom layer, our current results extend to that
case as long as the multilattices in each layer are the same (or, more generally,
have a common periodic cell).  However, this
does not allow for important effects such as disregistry to be modeled where the
lattice constants in each layer differ by an irrational factor~\cite{cazeaux2016}.  These effects
would require a different analysis due to lack of periodicity and lack of
continuum model to compare the atomistic Green's function too.

\appendix

\section{Proofs and Additional Results}

\subsection{Density of Test Functions}
\label{sec:dense}

Here we prove density of the test function space.

\begin{lemma}\label{lem:dense}
The quotient space $\bm{\mathcal{U}}_0$ is dense in $\bm{\mathcal{U}}  = \mathcal{U}/ \mathbb{R}^n$.
\end{lemma}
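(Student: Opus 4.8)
\textit{Proof proposal.} The plan is to exploit the displacement--shift decomposition $\bm{u} \leftrightarrow (U,\bm{p})$ with $U := u_0$ and $p_\alpha := u_\alpha - u_0$, and to reduce the claim to two independent density statements. By the norm equivalence $\|\cdot\|_{\a_1} \simeq \|\cdot\|_{\a_2}$ (Lemma~\ref{norm_equiv}), whose proof already rests on~\eqref{cond1}--\eqref{cond2}, we have $\bm{u} \in \mathcal{U}$ if and only if $\nabla I U \in L^2(\mathbb{R}^d)$ and $p_\alpha \in \ell^2(\mathcal{L})$ for every $\alpha$, and moreover $\|\bm{u} - \bm{u}_R\|_{\a_2} = \|\nabla I(U - U_R)\|_{L^2} + \sum_\alpha \|I(p_\alpha - p_\alpha^R)\|_{L^2}$. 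Since the quotient by $\mathbb{R}^n$ adds a common constant to all species, it leaves every $p_\alpha$ invariant and identifies $U$ with $U + \text{const}$; thus a constant may be subtracted from $U$ freely, and it suffices to approximate $U$ and each $p_\alpha$ separately by fields whose finite differences, respectively whose values, have compact support.

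The shift components are the easy part: since $p_\alpha \in \ell^2(\mathcal{L})$, the truncations $p_\alpha^R := p_\alpha\, \mathbbm{1}_{B_R}$ have compact support and converge to $p_\alpha$ in $\ell^2$, so that $\|I(p_\alpha - p_\alpha^R)\|_{L^2} \to 0$ as $R \to \infty$.

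The displacement component is the main obstacle, and I would treat it by the standard cutoff-and-subtract-the-average argument. Fix a smooth cutoff $\eta_R$ with $\eta_R \equiv 1$ on $B_R$, $\eta_R \equiv 0$ off $B_{2R}$, and $|\nabla \eta_R| \lesssim R^{-1}$; let $c_R$ be the average of $IU$ over the annulus $A_R := B_{2R} \setminus B_R$, and set $U_R(\xi) := \eta_R(\xi)\big(U(\xi) - c_R\big)$, which has compactly supported finite differences and so lies in $\mathcal{U}_0$ together with the truncated shifts. Then $\nabla I U_R = \eta_R \nabla I U + (IU - c_R)\nabla \eta_R$ up to standard interpolation errors, so
\begin{equation*}
   \|\nabla I(U - U_R)\|_{L^2} \lesssim \|(1-\eta_R)\nabla I U\|_{L^2} + \big\|(IU - c_R)\nabla \eta_R\big\|_{L^2(A_R)}.
\end{equation*}
The first term tends to $0$ by dominated convergence, as $\nabla I U \in L^2$. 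For the second, the Poincar\'e--Wirtinger inequality on the annulus gives $\|IU - c_R\|_{L^2(A_R)} \lesssim R\,\|\nabla I U\|_{L^2(A_R)}$ (the constant scaling like the width $R$), whence, using $|\nabla \eta_R| \lesssim R^{-1}$, the second term is $\lesssim \|\nabla I U\|_{L^2(A_R)} \to 0$ since $A_R \subset \{|x| \geq R\}$ and $\nabla I U \in L^2(\mathbb{R}^d)$. This is precisely where the freedom to subtract the constant $c_R$ (that is, the quotient structure) is essential.

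Combining the three estimates, the field $\bm{u}_R$ with $u_0^R = U_R$ and $u_\alpha^R = U_R + p_\alpha^R$ lies in $\mathcal{U}_0$ and satisfies $\|\bm{u} - \bm{u}_R\|_{\a_2} \to 0$, hence $\|\bm{u} - \bm{u}_R\|_{\a_1} \to 0$ by norm equivalence, proving density of $\bm{\mathcal{U}}_0$ in $\bm{\mathcal{U}}$. The only genuinely delicate point is the annular Poincar\'e estimate controlling the cutoff commutator $(IU-c_R)\nabla\eta_R$; everything else is truncation and interpolation bookkeeping.
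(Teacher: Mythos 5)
Your proposal is correct, and its analytic core is the same as the paper's: a smooth cutoff $\eta_R$, subtraction of an annulus average, and the Poincar\'e--Wirtinger inequality on $A_R$ with constant scaling like $R$. The genuine difference is the decomposition. You first split $\bm{u}$ into the base displacement $U = u_0$ and the shifts $p_\alpha = u_\alpha - u_0$, dispose of the shifts by sharp truncation $p_\alpha \mathbbm{1}_{B_R}$ (a triviality in $\ell^2$), and run the cutoff-and-average argument on $U$ alone; this reduces the multilattice claim to the Bravais-lattice density argument plus an elementary $\ell^2$ fact. The paper instead applies one truncation operator to every species simultaneously, $T_R u_\alpha = \eta_R\big(Iu_\alpha - \frac{1}{|A_R|}\int_{A_R} Iu_0\,dx\big)$, where the \emph{same} constant (the average of $Iu_0$) is subtracted from all species because the quotient only allows a common shift; as a consequence its Poincar\'e step produces a mixed term $\frac{1}{R}\|Iu_\alpha - Iu_0\|_{L^2(A_R)}$, which vanishes as $R \to \infty$ precisely because the shifts lie in $L^2(\mathbb{R}^d)$. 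Your route buys modularity and makes transparent which part of the norm forces which truncation; the paper's route buys a single operator $\Pi_R$ acting identically on all components, at the cost of estimates in which displacement and shift contributions are entangled. One point you assert rather than prove: the commutator between interpolation and multiplication, i.e.\ the discrepancy between $\nabla I\big(\eta_R(U - c_R)\big)$ and $\eta_R \nabla IU + (IU - c_R)\nabla\eta_R$, which you call ``standard interpolation errors.'' This term is genuinely needed — it is the first term in the paper's estimate \eqref{fox1} — and is handled there by summing over elements: since both factors are piecewise linear, the second derivative of their product on each element is controlled by $\nabla I\eta_R \otimes \nabla IU$, so with $\|\nabla I\eta_R\|_{L^\infty} \lesssim R^{-1}$ the contribution is $\lesssim R^{-1}\|\nabla IU\|_{L^2(A_R)} \to 0$. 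Adding that one display makes your proof complete.
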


\begin{proof}
The proof is a slight modification of~\cite[Theorem 2.1]{suli2012} taking into account both the interpolation operator and additional shift vectors.  We only provide a brief sketch of the proof; for a related proof in the context of a simple lattice, see~\cite[Lemma 1.8]{olson2015}.

Let $\eta$ be a smooth bump function with support in $B_{1}(0)$ and equal to one on $B_{3/4}(0)$, and for $R > 0$, let $\eta_R(x) := \eta(x/R)$ and $A_R := {\rm supp}(\nabla (I \eta_R))$. Next, for $\bm{u} \in \bm{\mathcal{U}}$, define the truncation operator $T_{R}\bm{u} = (T_Ru_\alpha)_{\alpha = 0}^{S-1}$ by
\begin{equation*}\label{trunc}
T_{R}u_\alpha(x) = \eta_R(x) \big(I u_\alpha - \frac{1}{|A_R|}\int\limits_{A_R} Iu_0\, dx\big),
\end{equation*}
where $|A_R|$ represents the measure of $A_R$.  Then define
\begin{align*}
&\Pi_R \bm{u} := (\Pi_R u_\alpha)_{\alpha = 0}^{S-1}, \\
&\Pi_R u_\alpha := I (T_Ru_\alpha).
\end{align*}
Clearly $\Pi_R \bm{u} \in \bm{\mathcal{U}}_0$, and so we need to show $\Pi_R\bm{u} - \bm{u} \to 0$ as $R \to \infty$.  Using the definition of $\Pi_R$, it is straightforward to show
\begin{equation}\label{fox1}
\begin{split}
&\|\nabla \Pi_R u_\alpha - \nabla I u_\alpha \|_{L^2(\mathbb{R}^d)} =~ \|\nabla I T_R u_\alpha - \nabla I u_\alpha \|_{L^2(\mathbb{R}^d)} \\
&\lesssim~ \|\nabla (I(\eta_R(I u_\alpha - \frac{1}{|A_R|}\int\limits_{A_R} Iu_0\, dx))) - \nabla(I\eta_R (Iu_\alpha -\frac{1}{|A_R|}\int\limits_{A_R} Iu_0\, dx))  \\
&\qquad  + \nabla(I\eta_R(Iu_\alpha -\frac{1}{|A_R|}\int\limits_{A_R} Iu_0\, dx)) -\nabla I u_\alpha \|_{L^2(\mathbb{R}^d)} \\
&\lesssim~ \|\nabla (I (\eta_R u_\alpha)) - \nabla (I\eta_R Iu_\alpha) \|_{L^2(A_R)} \\
&~+ \| (Iu_\alpha - \frac{1}{|A_R|}\int\limits_{A_R} Iu_0\, dx)\nabla I\eta_R^\transpose\|_{L^2(A_R)} + \|(I\eta_R - 1)\nabla Iu_\alpha \|_{L^2(A_R)}  \\
&\qquad\qquad\quad+ \|\nabla I u_\alpha \|_{L^2(\mathbb{R}^d\setminus B_R)}.
\end{split}
\end{equation}
Clearly, the latter two terms tend to zero as $R \to \infty$ since $\nabla u_\alpha \in L^2(\mathbb{R}^d)$.

By splitting the first term into a sum over triangles and using standard
interpolation estimates on each triangle, the first term in \eqref{fox1} can
also be seen to go to zero as $R \to \infty$:
\begin{align*}
&\hspace{-1.5cm} \|\nabla (I(\eta_R u_\alpha)) - \nabla (I\eta_R Iu_\alpha) \|_{L^2(A_R)}^2
=~ \|\nabla I(I\eta_R I u_\alpha) - \nabla (I\eta_R Iu_\alpha) \|_{L^2(A_R)}^2 \\
&=~ \sum_{T \in \mathcal{T}_\a, T \cap A_R \neq \emptyset}
   \|\nabla I(I\eta_R I u_\alpha) - \nabla (I\eta_R Iu_\alpha) \|_{L^2(T)}^2 \\
&\lesssim~ \sum_{T \in \mathcal{T}_\a, T \cap A_R \neq \emptyset}
   \|\nabla^2 (I\eta_R I u_\alpha)\|_{L^2(T)}^2
   =~ 2\sum_{T \in \mathcal{T}_\a, T \cap A_R \neq \emptyset}
   \| \nabla I\eta_R \nabla I u_\alpha^\transpose  \|_{L^2(T)}^2  \hspace{-1cm}  \\
&\lesssim~ \frac{1}{R^2} \sum_{T \in \mathcal{T}_\a, T \cap A_R \neq \emptyset}
   \|\nabla I u_\alpha \|_{L^2(T)}^2
   \lesssim~ \frac{1}{R^2} \|\nabla Iu_\alpha \|_{L^2(A_R)}^2
 \to 0 \quad \text{as $R \to \infty$,} \hspace{-1cm}
\end{align*}
where we used $\| \nabla I\eta_R \|_{L^\infty} \lesssim \| \nabla \eta_R \|_{L^\infty}
\lesssim  R^{-1}$ in the second inequality.

The second term in~\eqref{fox1} can also be seen to converge to zero after using
the Poincar\'e inequality and the fact that the Poincar\'e constant for $A_R$ is
bounded by a constant multiple of $R$.  Specifically,
\begin{align*}
 \| (Iu_\alpha - \frac{1}{|A_R|}\int\limits_{A_R} Iu_0\, dx)\nabla (I\eta(x/R))^T\|_{L^2(A_R)} \lesssim~ \|\nabla Iu_\alpha\|_{L^2(A_R)} + \frac{1}{R} \| Iu_\alpha - Iu_0 \|_{L^2(A_R)},
\end{align*}
which clearly tends to zero.
\end{proof}

\subsection{Proof of Theorem~\ref{well_defined}}

\label{sec:proof-well_defined}

As the summations defining $\mathcal{E}^\a_{\rm hom}(\bm{u})$ and
$\mathcal{E}^\a(\bm{u})$ differ only on the finite set where $V_\xi \not\equiv
V$, we need only show that $\mathcal{E}^\a_{\rm hom}(\bm{u})$ is well-defined.
We prove this along the lines of~\cite{theil2012}[Theorem 2.8]; we will
construct an auxiliary energy functional $\bar{\mathcal{E}}^\a_{\rm hom}$ which
is ${\rm C}^3$ and show that $\mathcal{E}^\a_{\rm hom}$ and
$\bar{\mathcal{E}}^\a_{\rm hom}$ are equal on the dense subset
$\bm{\mathcal{U}}_0$.

To that end, define
\[
\bar{\mathcal{E}}^\a_{\rm hom}(\bm{u}) := \sum_{\xi \in \mathcal{L}} \big[V(D\bm{u}(\xi)) - \sum_{\triple \in \mathcal{R}}V_{,\triple}(D\bm{y}(\xi))\cdot D_{\triple}\bm{u}(\xi)\big].
\]
Using a Taylor expansion of the site potential about $D\bm{y}(\xi)$ and a bound on the second derivatives of $V$,
\begin{align*}
|\bar{\mathcal{E}}^\a_{\rm hom}(\bm{u})| \lesssim~& \sum_{\xi \in \mathcal{L}} \sum_{\triple \in \mathcal{R}} \sum_{\tripleTau \in \mathcal{R}} |D_{\triple}\bm{u}(\xi)|\cdot |D_{\tripleTau}\bm{u}(\xi)| \\
\lesssim~& \Big\{\sum_{\xi \in \mathcal{R}}\sum_{\triple \in \mathcal{R}}|D_{\triple}\bm{u}(\xi)|^2  \Big\}^{1/2} \Big\{\sum_{\xi \in \mathcal{R}}\sum_{\tripleTau \in \mathcal{R}}|D_{\tripleTau}\bm{u}(\xi)|^2 \Big\}^{1/2}
\leq \|\bm{u}\|_{\a_1}^2.
\end{align*}
Since $\bar{\mathcal{E}}^\a_{\rm hom}$ is clearly invariant with respect to addition by constants, this shows $\bar{\mathcal{E}}^\a_{\rm hom}$ is well-defined on the quotient space $\bm{\mathcal{U}}$.

To show $\bar{\mathcal{E}}^\a_{\rm hom}(\bm{u})$ is differentiable, we again use a Taylor expansion and bound on the second derivative of $V$ to observe
\begin{align*}
&\bar{\mathcal{E}}^\a_{\rm hom}(\bm{u} + \bm{v}) - \bar{\mathcal{E}}^\a_{\rm hom}(\bm{u}) - \sum_{\xi \in \mathcal{L}}\sum_{\triple \in \mathcal{R}} \big[V_{,\triple}(D\bm{u}(\xi))\cdot D_{\triple}\bm{v}(\xi)
\\ &\qquad\qquad\qquad\qquad\qquad\qquad
+ V_{,\triple}(D\bm{y}(\xi))\cdot D_{\triple}\bm{v}(\xi)\big] \\
&\lesssim~ \sum_{\xi \in \mathcal{L}} \sum_{\triple \in \mathcal{R}} \sum_{\tripleTau \in \mathcal{R}} |D_{\triple}\bm{v}(\xi)|\cdot |D_{\tripleTau}\bm{v}(\xi)| \lesssim~ \|\bm{v}\|_{\a_1}^2.
\end{align*}
The first Fr\'echet derivative of $\bar{\mathcal{E}}^\a_{\rm hom}$ is thus defined by
\[
\< \delta \bar{\mathcal{E}}^\a_{\rm hom}(\bm{u}), \bm{v}\> = \sum_{\xi \in \mathcal{L}}\sum_{\triple \in \mathcal{R}} \big[V_{,\triple}(D\bm{u}(\xi))\cdot D_{\triple}(\bm{v}(\xi)) + V_{,\triple}(D\bm{y}(\xi))\cdot D_{\triple}\bm{v}(\xi)\big].
\]

To prove that $\delta \bar{\mathcal{E}}^\a_{\rm hom}(\bm{u})$ is differentiable, we again employ a Taylor expansion and a bound on the third derivative of $V$
\begin{align*}
&\<\delta \bar{\mathcal{E}}^\a_{\rm hom}(\bm{u} + \bm{w}) - \delta \bar{\mathcal{E}}^\a_{\rm hom}(\bm{u}), \bm{v}\> \\
&\qquad - \sum_{\xi \in \mathcal{L}}\sum_{\triple \in \mathcal{R}}\sum_{\tripleTau \in \mathcal{R}} \big[D_{\tripleTau}(\bm{w}(\xi))\big]^{\transpose}V_{,\triple\tripleTau}(D\bm{u}(\xi))\big[D_{\triple}\bm{v}(\xi)\big] \\
&\lesssim~ \sum_{\xi \in \mathcal{L}}\sum_{\triple \in \mathcal{R}}\sum_{\tripleTau \in \mathcal{R}}\sum_{\tripleSig \in \mathcal{R}} |D_{\triple}\bm{v}(\xi)|\cdot |D_{\tripleTau}\bm{w}(\xi)| \cdot |D_{\tripleSig}\bm{w}(\xi)| \\
&\lesssim~ \|\bm{v}\|_{\a_1} \cdot \sum_{\xi \in \mathcal{L}}\sum_{\tripleTau \in \mathcal{R}}\sum_{\tripleSig \in \mathcal{R}} |D_{\tripleTau}\bm{w}(\xi)| \cdot |D_{\tripleSig}\bm{w}(\xi)|
\lesssim~  \|\bm{v}\|_{\a_1}  \|\bm{w}\|_{\a_1}^2.
\end{align*}
Consequently, $\bar{\mathcal{E}}^\a_{\rm hom}(\bm{u})$ is twice differentiable with
\[
\<\delta^2 \bar{\mathcal{E}}^\a_{\rm hom}(\bm{u})\bm{v},\bm{w}\> = \sum_{\xi \in \mathcal{L}}\sum_{\triple \in \mathcal{R}}\sum_{\tripleTau \in \mathcal{R}} V_{,\triple\tripleTau}(D\bm{u}(\xi)): D_{\triple}(\bm{v}(\xi)):D_{\tripleTau}(\bm{w}(\xi)).
\]

In a similar fashion, a Taylor expansion and a bound on the fourth derivative of $V$ can be used to show that $\bar{\mathcal{E}}^\a_{\rm hom}(\bm{u})$  is three times differentiable with
\begin{align*}
&\<\delta^3 \bar{\mathcal{E}}^\a_{\rm hom}(\bm{u})[\bm{v},\bm{w},\bm{z}]\> = \\
&\sum_{\xi \in \mathcal{L}}\sum_{\triple \in \mathcal{R}}\sum_{\tripleTau \in \mathcal{R}}\sum_{\tripleSig \in \mathcal{R}} V_{,\triple\tripleTau\tripleSig}(D\bm{u}(\xi))[D_{\triple}(\bm{v}(\xi)),D_{\tripleTau}(\bm{w}(\xi)), D_{\tripleSig}(\bm{z}(\xi))].
\end{align*}
Now for $\bm{u} \in \bm{\mathcal{U}}_0$, we see that $\mathcal{E}^\a_{\rm hom}(\bm{u})$ is well defined (finite) and $\mathcal{E}^\a_{\rm hom}(\bm{u}) = \bar{\mathcal{E}}^\a_{\rm hom}(\bm{u})$ due to~\eqref{ostrich1}.  Since $\bm{\mathcal{U}}_0$ is dense in $\bm{\mathcal{U}}$, it follows that $\bar{\mathcal{E}}^\a_{\rm hom}$ is the unique, continuous extension of $\mathcal{E}^\a_{\rm hom}$ to $\bm{\mathcal{U}}$, which we have also proven to be ${\rm C}^3$ on $\bm{\mathcal{U}}$.
This completes the proof of Theorem~\ref{well_defined}.


\subsection{Lattice Stability}
Here we prove that if there exists any displacement $\bm{u} \in \bm{\mathcal{U}}$ such that
\[
\<\delta^2\mathcal{E}^\a(\bm{u})\bm{v},\bm{v}\> \geq~ \gamma_\a\|\bm{v}\|_{\a_1}^2 , \quad \forall \, \bm{v} \in \bm{\mathcal{U}}_0,
\]
then the stability assumption of Assumption~\ref{coercive} is met.

\begin{lemma}\label{freeStable}
Suppose that there exists a displacement $\bm{u} \in \bm{\mathcal{U}}$ such that
\[
\<\delta^2\mathcal{E}^\a(\bm{u})\bm{v},\bm{v}\> \geq~ \gamma_\a\|\bm{v}\|_{\a_1}^2 , \quad \forall \, \bm{v} \in \bm{\mathcal{U}}_0.
\]
Then the reference configuration satisfies~\eqref{eq:stab-hom}
\end{lemma}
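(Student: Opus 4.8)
The plan is to use a ``translation to infinity'' argument, exploiting that far from the defect core the site potentials $V_\xi$ coincide with the homogeneous potential $V$ (assumption V.1) and that a finite-energy displacement must have vanishing local strain at infinity. Concretely, I would fix an arbitrary test function $\bm{v} \in \bm{\mathcal{U}}_0$ and, for each lattice vector $a \in \mathcal{L}$, introduce its translate $\bm{v}_a$ defined by $v_{a,\alpha}(\xi) := v_\alpha(\xi - a)$. Since translation by a lattice vector preserves the compact support of $Dv_0$ and of the differences $v_\alpha - v_0$, each $\bm{v}_a$ again lies in $\bm{\mathcal{U}}_0$, and the $\a_1$-norm is manifestly translation invariant, so that $\|\bm{v}_a\|_{\a_1} = \|\bm{v}\|_{\a_1}$.

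The second step is to write out the second variation at $\bm{u}$ and reindex. Using $D_{\triple}\bm{v}_a(\xi) = D_{\triple}\bm{v}(\xi-a)$ and substituting $\zeta = \xi - a$, one obtains
\[
   \big\<\delta^2\mathcal{E}^\a(\bm{u})\bm{v}_a, \bm{v}_a\big\>
   = \sum_{\zeta \in \mathcal{L}} \sum_{\triple, \tripleTau \in \mathcal{R}}
      [D_{\tripleTau}\bm{v}(\zeta)]^{\transpose}
      V_{\zeta+a,\triple\tripleTau}(D\bm{u}(\zeta+a))
      [D_{\triple}\bm{v}(\zeta)].
\]
The decisive feature is that the sum over $\zeta$ contains only finitely many nonzero terms, namely those in the support of $\bm{v}$, so that the limit $|a| \to \infty$ may be taken term by term without any dominated-convergence bookkeeping.

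Finally I would carry out the termwise limit. For each fixed $\zeta$, once $|a|$ is large enough one has $|\zeta + a| \geq \Rdef$, whence $V_{\zeta+a,\triple\tripleTau} = V_{,\triple\tripleTau}$ by V.1; and since $\bm{u} \in \bm{\mathcal{U}}$ gives $\sum_\xi |D\bm{u}(\xi)|_\mathcal{R}^2 < \infty$, the individual strains satisfy $|D\bm{u}(\zeta+a)|_\mathcal{R} \to 0$, so by continuity of the second derivatives (V.2) we get $V_{,\triple\tripleTau}(D\bm{u}(\zeta+a)) \to V_{,\triple\tripleTau}(0)$. Passing to the limit therefore yields $\big\<\delta^2\mathcal{E}^\a(\bm{u})\bm{v}_a, \bm{v}_a\big\> \to \big\<\delta^2\mathcal{E}^\a_{\rm hom}(0)\bm{v}, \bm{v}\big\>$, while the hypothesis applied to each $\bm{v}_a$ gives $\big\<\delta^2\mathcal{E}^\a(\bm{u})\bm{v}_a, \bm{v}_a\big\> \geq \gamma_\a\|\bm{v}_a\|_{\a_1}^2 = \gamma_\a\|\bm{v}\|_{\a_1}^2$ for every $a$. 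Combining the two produces exactly \eqref{eq:stab-hom}. The only point needing any care is the interchange of limit and sum, which is rendered trivial by the compact support of $\bm{v}$; thus I expect no genuine obstacle beyond correctly tracking the translation and invoking the decay $|D\bm{u}(\xi)|_\mathcal{R}\to 0$ that is forced by membership in $\bm{\mathcal{U}}$.
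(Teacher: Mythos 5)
Your proposal is correct and follows essentially the same route as the paper's own proof: translate the test function by lattice vectors tending to infinity, exploit the compact support of $D\bm{v}$ to reduce to a finite sum where the limit can be taken termwise, and use assumption V.1 together with $D\bm{u} \in \ell^2$ (hence $|D\bm{u}(\zeta+a)|_{\mathcal{R}} \to 0$) and the continuity of second derivatives to conclude $V_{\zeta+a,\triple\tripleTau}(D\bm{u}(\zeta+a)) \to V_{,\triple\tripleTau}(0)$. The translation invariance of $\|\cdot\|_{\a_1}$ and the hypothesis applied to each translate then yield \eqref{eq:stab-hom}, exactly as in the paper.
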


\begin{proof}
The proof is a straightforward extension of~\cite[Lemma 2.2]{Ehrlacher2013}.  Fix a test pair $\bm{v}$ and let $r$ be large enough so that $D\bm{v}$ has support in the ball of radius $r$.  Our goal is to find a suitable sequence of test pairs $\bm{v}_n$ which satisfy
\begin{align*}
\lim_{n \to \infty}\<\delta^2\mathcal{E}^\a(\bm{u}^\infty)\bm{v}_n,\bm{v}_n\>  = \<\delta^2\mathcal{E}^\a_{\rm hom}(0)\bm{v},\bm{v}\>.
\end{align*}
Take $\xi_n \in \mathcal{L}$ such that $|\xi_n| < |\xi_{n+1}|$ and $|\xi_n| \to \infty$, and further define $\bm{v}_n(\xi) = \bm{v}(\xi - \xi_n)$, which shifts the support of $D\bm{v}_n$ to $B_{r}(\xi_n)$.  Consequently,
\begin{align*}
\gamma_\a\|\bm{v}\|_\a^2 \leq~& \lim_{n \to \infty}\<\delta^2\mathcal{E}^\a(\bm{u})\bm{v}_n,\bm{v}_n\> =~ \lim_{n \to \infty}\sum_{\xi \in \mathcal{L}}\<\delta^2 V_\xi(D\bm{u})D\bm{v}_n(\xi), D\bm{v}_n(\xi)\> \\
=~& \lim_{n \to \infty}\sum_{\xi \in \mathcal{L} \cap B_r(\xi_n)}\<\delta^2 V_\xi(D\bm{u})D\bm{v}(\xi-\xi_n), D\bm{v}(\xi-\xi_n)\> \\
=~& \lim_{n \to \infty}\sum_{\xi \in \mathcal{L} \cap B_r(0)}\<\delta^2 V_{\xi+\xi_n}(D\bm{u}(\xi+\xi_n))D\bm{v}(\xi), D\bm{v}(\xi)\> \\
=~& \sum_{\xi \in \mathcal{L} \cap B_r(0)} \lim_{n \to \infty}\<\delta^2 V_{\xi+\xi_n}(D\bm{u}(\xi+\xi_n))D\bm{v}(\xi), D\bm{v}(\xi)\> \\
=~& \sum_{\xi \in \mathcal{L} \cap B_r(0)}\<\delta^2 V(0)D\bm{v}(\xi), D\bm{v}(\xi)\>,
\end{align*}
by virtue of $V_\xi(D\bm{u}(\xi+\xi_n)) \to V(0)$ in $\ell^\infty$ which itself
is due to $D\bm{u}^\infty \in \ell^2$.
\end{proof}

\subsection{Proof of (\ref{eq:prf-equil_shift_cor})}
\label{sec:cb_at}
\begin{proof}
Without loss of generality, we assume that
\[
\triple \in \mathcal{R} \quad \text{if and only if} \quad
         (-\rho\beta\alpha) \in \mathcal{R}.
\]
This condition can always be met by enlarging the interaction range if necessary.

To prove \eqref{eq:prf-equil_shift_cor}, we then observe that
	\begin{align*}
	&\< \mathcal{E}^\a_{\rm hom}(0), \bm{v} \> \\
	&=~ \sum_{(\rho\gamma\beta) \in \mathcal{R}} \hat{V}_{,(\rho\gamma\beta)}(D\bm{y}(\zeta)) \cdot \big[v_\beta(\zeta + \rho) - v_\gamma(\zeta)\big] + \sum_{(\rho\beta\gamma) \in \mathcal{R}} \hat{V}_{,(\rho\beta\gamma)}(D\bm{y}(\zeta)) \cdot \big[v_\gamma(\zeta + \rho) - v_\beta(\zeta)\big] \\
	&-~ \sum_{(\rho\gamma\gamma) \in \mathcal{R}} \hat{V}_{,(\rho\gamma\gamma)}(D\bm{y}(\zeta)) \cdot \big[v_\gamma(\zeta + \rho) - v_\gamma(\zeta)\big] +~ \sum_{\substack{\tripleSig \in \mathcal{R} \\ \sigma \neq 0}}\sum_{\triple \in \mathcal{R}}\hat{V}_{,\triple}(D\bm{y}(\zeta + \sigma)) \cdot D_{\triple} \bm{v}(\zeta + \sigma) \\
	&=~ -\sum_{(\rho\gamma\beta) \in \mathcal{R}} \hat{V}_{,(\rho\gamma\beta)}(D\bm{y}(\zeta)) + \sum_{(0\beta\gamma) \in \mathcal{R}} \hat{V}_{,(0\beta\gamma)}(D\bm{y}(\zeta))  - \sum_{(\rho\gamma\gamma) \in \mathcal{R}} \hat{V}_{,(\rho\gamma\gamma)}(D\bm{y}(\zeta)) \\
	&+~ \sum_{(\rho\gamma\gamma) \in \mathcal{R}} \hat{V}_{,(\rho\gamma\gamma)}(D\bm{y}(\zeta)) +~ \sum_{\substack{\tripleSig \in \mathcal{R} \\ \sigma \neq 0}}\sum_{\triple \in \mathcal{R}}\hat{V}_{,\triple}(D\bm{y}(\zeta + \sigma)) \cdot \big[v_\beta(\zeta + \sigma + \rho) - v_\alpha(\zeta + \sigma) \big] \\
	&=~ -\sum_{(\rho\gamma\beta) \in \mathcal{R}} \hat{V}_{,(\rho\gamma\beta)}(D\bm{y}(\zeta)) + \sum_{(0\beta\gamma) \in \mathcal{R}} \hat{V}_{,(0\beta\gamma)}(D\bm{y}(\zeta)) + \sum_{\substack{(\sigma\beta\gamma) \in \mathcal{R} \\ \sigma \neq 0}}\hat{V}_{,(-\sigma\beta\gamma)}(D\bm{y}(\zeta + \sigma)) \\
	&=~ -\sum_{(\rho\gamma\beta) \in \mathcal{R}} \hat{V}_{,(\rho\gamma\beta)}(D\bm{y}(\zeta)) + \sum_{(\rho\beta\gamma) \in \mathcal{R}} \hat{V}_{,(\rho\beta\gamma)}(D\bm{y}(\zeta)) \\
	&=~ -\sum_{(\rho\gamma\beta) \in \mathcal{R}} \hat{V}_{,(\rho\gamma\beta)}\big((\mG\sigma + p_\chi - p_\iota)_{\tripleSig \in \mathcal{R}} \big) + \sum_{(\rho\beta\gamma) \in \mathcal{R}} \hat{V}_{,(\rho\beta\gamma)}\big((\mG\sigma + p_\chi - p_\iota)_{\tripleSig \in \mathcal{R}} \big).
	\end{align*}
Meanwhile, straightforward computations yield
\[
\partial_{p_\gamma} \hat{W}(\mG, \bm{p}) = \sum_{(\rho\beta\gamma) \in \mathcal{R}} \hat{V}_{,(\rho\beta\gamma) }\big((\mG\sigma + p_\chi - p_\iota)_{\tripleSig \in \mathcal{R}} \big) - \sum_{(\rho\gamma\beta) \in \mathcal{R}}\hat{V}_{,(\rho\gamma\beta)}\big((\mG\sigma + p_\chi - p_\iota)_{\tripleSig \in \mathcal{R}} \big).
\]
\end{proof}

\subsection{Proof of (\ref{claimant})}
\label{sec:proof-claimant}
Applying the chain rule, and repeatedly using the fact that $\mG$ satisfies
$\partial_{\bm{p}} \hat{W}((\mG,\bm{p})) = 0$,  we obtain
\[
\partial^2_{\mG} \bar{W}(\mG) = \partial_{\mG\mG}^2 W(\mG,\bm{p}) - \partial^2_{\mG \bm{p}} W(\mG,\bm{p}) [\partial^2_{\bm{p}\bm{p}}W(\mG, \bm{p})]^{-1} \partial^2_{\bm{p}\mG} W(\mG,\bm{p}).
\]
From straightforward computations, we have
\begin{align*}
\partial^2_{\bm{p}\bm{p}}W(\mG, \bm{p}) =~& J_{\bm{p}\bm{p}} \\
\partial^2_{\mG_{mn} \bm{p}_\beta^l} W(\mG,\bm{p}) =~& \sum_{\triple \in \mathcal{R}}\sum_{\tripleTau\in\mathcal{R}} V_{,\triple\tripleTau}^{lm}(0)\tau_n - \sum_{(\rho\beta\alpha) \in \mathcal{R}}\sum_{\tripleTau\in\mathcal{R}} V_{,(\rho\beta\alpha)\tripleTau}^{lm}(0)\tau_n \\
\partial^2_{\mG_{mn} \mG_{rs}} =~& \sum_{\triple\in \mathcal{R}} \sum_{\tripleTau \in \mathcal{R}} V_{,\triple\tripleTau}^{mr}(0)\rho_n\tau_s
\end{align*}
so that
\begin{align*}
&\int_{\mathbb{R}^d} {\rm A}:\nabla Z:\nabla Z\, dx =~  \int_{\mathbb{R}^d} \partial_{\mG\mG}^2 W(\mG,\bm{p}):\nabla Z:\nabla Z\, dx \\
&\quad- \int_{\mathbb{R}^d} \partial^2_{\mG \bm{p}} W(\mG,\bm{p}) [\partial^2_{\bm{p}\bm{p}}W(\mG, \bm{p})]^{-1} \partial^2_{\bm{p}\mG} W(\mG,\bm{p}):\nabla Z:\nabla Z\, dx \\
&=~ \int_{\mathbb{R}^d}\partial^2_{\mG_{mn} \mG_{rs}} W(\mG,\bm{p})\frac{\partial}{\partial x_n}Z_{m}(x) \frac{\partial}{\partial x_s}Z_{r}(x) \, dx \\
&\quad- \int_{\mathbb{R}^d} \partial^2_{\mG_{rs} \bm{p}^\alpha_i} W(\mG,\bm{p}) [J_{\bm{p}\bm{p}}]^{-1}_{\alpha i \beta j} \partial^2_{\bm{p}^\beta_j\mG_{mn}} W(\mG,\bm{p}) \frac{\partial}{\partial x_n}Z_{m}(x) \frac{\partial}{\partial x_s}Z_{r}(x)\, dx \\
&=~ \int_{\mathbb{R}^d}4\pi^2 \partial^2_{\mG_{mn} \mG_{rs}} W(\mG,\bm{p})k_n k_s \hat{Z}^*_{m}(k)\hat{Z}_{r}(x) \, dk \\
&\quad- \int_{\mathbb{R}^d} 4\pi^2 \hat{Z}^*_{m}(k) \partial^2_{\mG_{rs} \bm{p}^\alpha_i} W(\mG,\bm{p})k_s [J_{\bm{p}\bm{p}}]^{-1}_{\alpha i \beta j} \partial^2_{\bm{p}^\beta_j\mG_{mn}} W(\mG,\bm{p}) k_n \hat{Z}_{r}(k)\, dk \\
&=~ \int_{\mathbb{R}^d} \hat{Z}^*_{m}(k) J_{00}^{mr}(k) \hat{Z}_{r}(k) \, dk - \int_{\mathbb{R}^d}  \hat{Z}^*_{m}(k) [ J_{0\bm{p}} J_{\bm{p}\bm{p}}^{-1} J_{\bm{p}0}]_{mr} \hat{Z}_{r}(k)\, dk \\
&=~ \int_{\mathbb{R}^d} \hat{Z}^*(k) M(k) \hat{Z}(k) \, dk ~\gtrsim \|\nabla Z\|^2_{L^2(\mathbb{R}^d)}.
\end{align*}
This completes the proof of \eqref{claimant}.

\subsection{Norm Equivalence}\label{norm_equiv}

\begin{lemma}
The norms defined for $\bm{v} = (Z,\bm{q})$ by
\[
\|\bm{v}\|_{\a_3}^2 = \|(Z, \bm{q})\|_{\a_3}^2 := \|2\pi|k|\hat{Z}\|_{L^2(\mathcal{B})}^2 + \sum_{\alpha = 1}^{S-1} \|\hat{q}_\alpha\|^2_{L^2(\mathcal{B})}
\]
and
\[
\| \bm{v}\|_{\a_2}^2 := \|\nabla IZ\|_{L^2(\mathbb{R}^d)}^2 + \sum_{\alpha}\|Iq_\alpha\|_{L^2(\mathbb{R}^d)}^2.
\]
are equivalent on $\bm{\mathcal{U}}$.
\end{lemma}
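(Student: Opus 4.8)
The plan is to exploit the fact that, unlike $\|\cdot\|_{\a_1}$, both $\|\cdot\|_{\a_2}$ and $\|\cdot\|_{\a_3}$ are \emph{block-diagonal} in the displacement--shift variables $(Z,\bm q)$: neither contains a cross term coupling $Z$ to the $q_\alpha$. Consequently the equivalence splits into two independent comparisons, one for the gradient block and one for the shift block, and no Schur-complement or ellipticity argument is needed. The mechanism converting the real-space interpolant norms into Fourier integrals is the following. Because $\mathcal{T}_\a$ is a periodic simplicial mesh, the ${\rm P}1$ nodal basis functions $\phi_\xi$ are lattice translates of finitely many reference shapes, so the mass and stiffness inner products $\langle \phi_\xi,\phi_{\xi'}\rangle$ and $\langle \nabla\phi_\xi,\nabla\phi_{\xi'}\rangle$ depend only on $\xi-\xi'$ and are finitely supported. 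Hence for any $w:\mathcal{L}\to\mathbb{R}^n$ the quantities $\|Iw\|_{L^2(\mathbb{R}^d)}^2$ and $\|\nabla Iw\|_{L^2(\mathbb{R}^d)}^2$ are translation-invariant quadratic forms, and the semidiscrete convolution theorem together with Plancherel gives
\[
\|Iw\|_{L^2(\mathbb{R}^d)}^2 = \int_{\mathcal{B}} \mu(k)\,|\hat w(k)|^2\, dk, \qquad
\|\nabla Iw\|_{L^2(\mathbb{R}^d)}^2 = \int_{\mathcal{B}} \tilde\sigma(k)\,|\hat w(k)|^2\, dk,
\]
with real, nonnegative, smooth (in fact trigonometric-polynomial) symbols $\mu,\tilde\sigma$; since the gradient acts componentwise on the $n$ coordinates of $w$, its symbol is scalar times $I_n$. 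Everything therefore reduces to the two uniform symbol bounds $\mu(k)\simeq 1$ and $\tilde\sigma(k)\simeq 4\pi^2|k|^2$ on $\mathcal{B}$, where I write $a\simeq b$ for $a\lesssim b\lesssim a$.

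For the shift block I would apply the first identity to each $q_\alpha$. The bound $\mu(k)\simeq 1$ is precisely the statement that the ${\rm P}1$ mass matrix of the regular mesh $\mathcal{T}_\a$ is spectrally equivalent to the identity, i.e. $\|Iw\|_{L^2(\mathbb{R}^d)}\simeq\|w\|_{\ell^2(\mathcal{L})}=\|\hat w\|_{L^2(\mathcal{B})}$, which is standard for quasi-uniform meshes. Summing over $\alpha=1,\dots,S-1$ (the $\alpha=0$ term vanishes because $q_0\equiv 0$) yields $\sum_\alpha\|Iq_\alpha\|_{L^2(\mathbb{R}^d)}^2\simeq\sum_{\alpha\ge 1}\|\hat q_\alpha\|_{L^2(\mathcal{B})}^2$.

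For the gradient block I would establish $\tilde\sigma(k)\simeq|k|^2$ uniformly on $\mathcal{B}$ in two regimes. Near the origin $\tilde\sigma$ is smooth and vanishes to second order, with leading term $4\pi^2\,k^{\transpose}\Theta k$; the matrix $\Theta$ is positive definite because the mesh edges, hence the $\rho\in\mathcal{R}_1$ by \eqref{assumption:mesh}, span $\mathbb{R}^d$ and $\mathcal{T}_\a$ is regular, so a non-constant affine field has non-vanishing interpolated gradient in every direction. This gives $\tilde\sigma(k)\simeq|k|^2$ for $|k|\le\delta$, and also the global upper bound, since $\tilde\sigma$ is bounded on the compact set $\mathcal{B}$ while $|k|^2\ge\delta^2$ off that ball. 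The remaining point is the uniform \emph{lower} bound away from the origin: $\tilde\sigma(k)=0$ for $k\in\mathcal{B}$ forces $e^{2\pi i k\cdot\rho}=1$ for every edge vector $\rho$, i.e. $k$ lies in the reciprocal lattice $\mB\mathbb{Z}^d$; but the closed Brillouin zone $\mathcal{B}$ (the Voronoi cell of the origin in $\mB\mathbb{Z}^d$) contains no nonzero reciprocal lattice vector, so $\tilde\sigma>0$ on $\mathcal{B}\setminus\{0\}$, and continuity on the compact set $\{|k|\ge\delta\}$ upgrades this to $\tilde\sigma(k)\ge c_\delta\gtrsim|k|^2$. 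Combining the two regimes gives $\tilde\sigma(k)\simeq 4\pi^2|k|^2$ on $\mathcal{B}$, hence $\|\nabla IZ\|_{L^2(\mathbb{R}^d)}^2\simeq\|2\pi|k|\hat Z\|_{L^2(\mathcal{B})}^2$.

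Adding the gradient and shift blocks then proves $\|\cdot\|_{\a_2}\simeq\|\cdot\|_{\a_3}$ on $\bm{\mathcal{U}}$. The main obstacle is contained entirely in the gradient-symbol lower bound, and the essential geometric input is that the Brillouin zone meets the reciprocal lattice only at the origin; once that is in hand the rest is bookkeeping with standard finite-element symbol estimates.
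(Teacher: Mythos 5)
Your proof is correct, and at the top level it follows the same strategy as the paper's: split the two block-diagonal pieces, identify each real-space quadratic form with a Fourier multiplier via Plancherel for the semidiscrete transform, and compare symbols. The difference lies in how the gradient block is treated. The paper first invokes the finite-element equivalence $\|\nabla I Z\|_{L^2(\mathbb{R}^d)}^2 \simeq \sum_i \sum_{\xi\in\mathcal{L}} |D_{e_i}Z(\xi)|^2$, so the symbol to be compared with $4\pi^2|k|^2$ is the explicit trigonometric expression $\sum_i 4\sin^2(\pi k_i)$, and the two-sided bound is then an elementary inequality --- whose lower half, note, tacitly uses exactly the geometric fact you isolate, namely that the symbol vanishes only on the reciprocal lattice and the Brillouin zone $\mathcal{B}$ meets that lattice only at the origin. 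You instead keep the exact ${\rm P}1$ stiffness symbol $\tilde\sigma$ and prove $\tilde\sigma(k)\simeq |k|^2$ softly: Taylor expansion at $k=0$ (where positive definiteness of the Hessian follows since ${\rm P}1$ interpolation reproduces affine fields exactly, so one even gets $\Theta = I$), plus positivity away from the origin via the zero-set characterization and compactness. What the paper's route buys is brevity and a fully explicit symbol; what yours buys is that the crucial lower bound away from $k=0$ is actually argued rather than asserted, and the argument applies to any periodic mesh without passing through the intermediate coordinate-difference seminorm. One precision worth adding if you write this up: to conclude from $e^{2\pi i k\cdot\rho}=1$ on all mesh edges that $k \in \mB\mathbb{Z}^d$, it is not enough that the edge vectors \emph{span} $\mathbb{R}^d$ (a spanning set could generate a proper sublattice of $\mathcal{L}=\mF\mathbb{Z}^d$, whose dual is strictly finer than $\mB\mathbb{Z}^d$ and could intersect $\mathcal{B}\setminus\{0\}$); you need that the edge vectors \emph{generate} $\mathcal{L}$, equivalently that the edge graph of $\mathcal{T}_\a$ is connected, which does hold here by construction of the mesh, so this is a wording fix rather than a gap.
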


\begin{proof}
Note
\begin{align*}
\sum_{i =1}^d\sum_{\xi \in \mathcal{L}}|D_{e_i}v_0(\xi)|^2 \lesssim~ \|\nabla IZ\|_{L^2(\mathbb{R}^d)}^2 \lesssim~ \sum_{i =1}^d\sum_{\xi \in \mathcal{L}}|D_{e_i}v_0(\xi)|^2
\end{align*}
and
\begin{align*}
\sum_{i =1}^d\sum_{\xi \in \mathcal{L}}|D_{e_i}v_0(\xi)|^2 =~& \sum_{i=1}^d\int_{\mathcal{B}} \widehat{D_{e_i} Z}^* \widehat{D_{e_i}Z} \\
=~& \sum_{i=1}^d\int_{\mathcal{B}} 4\sin^2(\pi k_i) |\hat{Z}|^2(k)
\end{align*}
Since
\[
\|2\pi|k| Z\|_{L^2(\mathcal{B})}^2 \lesssim~ \sum_{i=1}^d\int_{\mathcal{B}} 4\sin^2(\pi k_i) |\hat{Z}|^2(k) \lesssim~ \|2\pi|k| \hat{Z}\|_{L^2(\mathcal{B})}^2
\]
we see that
\[
\|2\pi|k| \hat{Z}\|_{L^2(\mathcal{B})} \lesssim~ \|\nabla IZ\|_{L^2(\mathbb{R}^d)}^2 \lesssim~ \|2\pi|k| \hat{Z}\|_{L^2(\mathcal{B})}.
\]
Similarly,
\begin{align*}
\int_{\mathcal{B}} |\hat{q_\alpha}|^2 = \|q_\alpha \|_{\ell^2(\mathcal{L})} \lesssim~ \|Iq_\alpha\|^2_{L^2(\mathbb{R}^d)} \lesssim~ \|q_\alpha \|_{\ell^2(\mathcal{L})} = \int_{\mathcal{B}} |\hat{q_\alpha}|^2.
\end{align*}
\end{proof}

\bibliographystyle{plain}	
\bibliography{myrefs}		

\end{document}